\newcommand{\vbf}{\bf}
\newcommand{\Be}{\begin{equation}}
\newcommand{\Ee}{\end{equation}}
\newcommand{\Bea}{\begin{eqnarray}}
\newcommand{\Eea}{\end{eqnarray}}
\newcommand{\Beas}{\begin{eqnarray*}}
\newcommand{\Eeas}{\end{eqnarray*}}
\newcommand{\Benu}{\begin{enumerate}}
\newcommand{\Eenu}{\end{enumerate}}
\newcommand{\Bi}{\begin{itemize}}
\newcommand{\Ei}{\end{itemize}}
\def\intslash{\rlap{\kern  .32em $\mspace {.5mu}\backslash$ }\int}
\def\qsl{{\rlap{\kern  .32em $\mspace {.5mu}\backslash$ }\int_{Q_x}}}
\def\emph#1{{\it #1 }}
\def\dist{{\text{\it dist\,}}}
\def\supp{{\text{\rm supp}}}
\def\lc{\lesssim}
\def\eps{\varepsilon}
\def\zet{\zeta}
\def\fC{{\mathfrak {C}}}
\def\fG{{\mathfrak {G}}}
\def\fK{{\mathfrak {K}}}
\def\fM{{\mathfrak {M}}}
\def\fN{{\mathfrak {N}}}
\def\fP{{\mathfrak {P}}}
\def\fQ{{\mathfrak {Q}}}
\def\fS{{\mathfrak {S}}}
\def\fT{{\mathfrak {T}}}
\def\fd{{\mathfrak {d}}}
\def\fm{{\mathfrak {m}}}
\def\fq{{\mathfrak {q}}}
\def\bbR{{\mathbb {R}}}
\def\cA{{\mathcal {A}}}
\def\cB{{\mathcal {B}}}
\def\cE{{\mathcal {E}}}
\def\cF{{\mathcal {F}}}
\def\cG{{\mathcal {G}}}
\def\cP{{\mathcal {P}}}
\def\cQ{{\mathcal {Q}}}
\def\cR{{\mathcal {R}}}
\def\be#1{\begin{equation}\label{ #1}}
\def\endeq{\end{equation}}
\def\endal{\end{align}}
\def\bas{\begin{align*}}
\def\eas{\end{align*}}
\def\bi{\begin{itemize}}
\def\ei{\end{itemize}}
\def\eps{\varepsilon}
\def\emph#1{{\it #1}}
\def\textbf#1{{\bf #1}}
\newtheorem{thm}{Theorem}[section]
\newtheorem{cor}[thm]{Corollary}
\newtheorem{lem}[thm]{Lemma}
\newtheorem{prop}[thm]{Proposition}
\theoremstyle{definition}
\newtheorem{defn}[thm]{Definition}
\theoremstyle{remark}
\newtheorem{rem}[thm]{Remark}
\newcommand{\mbr}{{\mathbf r}}
\newcommand{\mbq}{\mathbf q}
\newcommand{\ball}[2]{B(#1,#2)}
\newcommand{\ltwo}{\|_{L^2_t(I)}}
\newcommand{\Lp}[2]{\Big\|_{L^{#1}{({#2})}}}
\newcommand{\tdelf}[2]{ \tdel f_{\!\qqq{#1}{#2}} }
\newcommand{\tdelg}[2]{ \tdel f_{\!\qqq{#1}{#2}} }
\newcommand{\tdeltg}[2]{  \tdel (\tau_{#2} f_{\!\qqq{#1}{#2}}) }
\newcommand{\ffqq}[1]{f_{\fq^{#1}}}
\newcommand{\qq}[1]{{\fq^{{#1}}}}
\newcommand{\qqq}[2]{{\fq^{#1}_{#2}}}
\newcommand{\tdff}[3]{[\tdel f_{\!\qqq{#1}{#2}}]^{#3}}
\newcommand{\tdfff}[2]{\tdel f_{\!\qqq{#1}{#2}}}
\newcommand{\tdfg}[3]{[\tdel f_{\!\qqq{#1}{#2}}]^{#3}}
\newcommand{\tdtfg}[2]{\tdel (\tau_{#2} f_{\!\qqq{#1}{#2}})}
\newcommand{\tdel}{T_\delta}
\newcommand{\fge}{\fG(\epsilon_\circ, N)}
\newcommand{\psiaep}{\psi_a^\varepsilon}
\newcommand{\mbn}{\mathbf n}
\newcommand{\ffqi}{f_{\!\fq^i}}
\newcommand{\tdtaf}[2]{\tdel (\tau_{#2} f_{\!\qqq{#1}{#2}})}
\newcommand{\Lpi}[1]{\Big\|_{L^{#1}_t(I)}}
\newcommand{\my}{\mathbf y}
 \newcommand{\mba}{\mathbf a}
\newcommand{\mbc}{\mathbf{c}}
\newcommand{\mbK}{\mathbf K}
\newcommand{\fgee}{\overline{\mathfrak G}(\epsilon_\circ,N)}
\newcommand{\qqs}[1]{\fq^{#1}}
\newcommand{\vpi}[2]{\fq^{#1}_{#2}}
\newcommand{\sdel}{S_\delta}
\newcommand{\ffqqs}[1]{f_{\fq^{#1}}}
\newcommand{\sphere}{\mathbb S^{d-1}}
\newcommand{\sctp}[2]{{\fS^{#1}_{\fd^{#2}}}}
\newcommand{\tti}{\fd^i}
\newcommand{\ti}[1]{\fd^#1}
\newcommand{\tii}[2]{\fd^{#1}_{#2}}
\newcommand{\sctffp}[2]{\fS^{}_{\tii{#1}{#2}}f}
\newcommand{\vpp}[2]{\fq^{#1}_{#2}}
\newcommand{\vppf}[2]{f_{\vpp{#1}{#2}}}
\newcommand{\vppff}[2]{f_{\vpp{#1}{#2}}}
\newcommand{\sqfrq}[3]{\fS^{}_{\fd^{#1}_{#2}} F_{\fq^{#3}_{#2}}}
\newcommand{\sqfr}[4]{\fS^{}_{\fd^{#1}_{#2}} f_{\fq^{#3}_{#4}}}
\newcommand{\smathbbz}{\sigma^{-1} \mathbb Z}
\begin{document}

\subjclass[2000]{42B15, 35B65} \keywords{Square function,
Bochner--Riesz means}

\author{Sanghyuk Lee}
\address{Sanghyuk Lee\\ School of Mathematical Sciences, Seoul National University, Seoul 151-742, Korea} \email{shklee@snu.ac.kr}

\title[Square function estimates]
{Square function estimates  for \\ the Bochner-Riesz means}

\begin{abstract} We consider the  square function (known as Stein's square function) estimate associated with the  Bochner-Riesz means.  The previously known range of sharp estimate is improved. Our results
are based on vector valued extensions of  Bennett-Carbery-Tao's multilinear (adjoint)
restriction estimate and adaptation of an induction  argument due to Bourgain-Guth.   Unlike the previous work by Bourgain-Guth on $L^p$ boundedness of the Bochner-Riesz means  in which  oscillatory operators  associated to the kernel were studied,  we take more direct approach by working  on Fourier transform side.  This enables us to obtain the correct order of smoothing which is essential for obtaining  the sharp estimates for the square functions. 
\end{abstract}

\maketitle

\maketitle
\section{Introduction}
\noindent We consider the Bochner-Riesz mean of order $\alpha$ which is defined by
\[\widehat{\cR^\alpha_t f}(\xi)=\Big(1-\frac {|\xi|^2}{t^2}\Big)_+^\alpha\,
\widehat f(\xi), \  \ t>0,\  \   \xi\in \mathbb R^d,  \ \   d\ge 2\,.\]
Let $1\le p\le \infty$. The Bochner-Riesz conjecture  is that the estimate 
\begin{equation}\label{lp}\| \cR^\alpha_t  f\|_p\le C\|f\|_p
\end{equation} holds (except $p=2$) if and only if
\Be\label{exponent}\alpha>\alpha(p)=\max\Big(d\,\Big|\frac12-\frac1p\Big|-\frac12,
0\Big).\Ee
The Bochner-Riesz mean which is a kind of summability method has been studied in order to understand  convergence properties  of Fourier series and integrals.  In fact, for $1\le p<
\infty$,  $L^p$  boundedness of $\cR^\alpha_t$ implies  
$\cR^\alpha_t f\to  f$ in $L^p$ as $t\to \infty$.  The necessary condition \eqref{exponent} has been known for a long time (\cite{fe2}, \cite[p.\,389]{st2}).

When $d=2$, the conjecture  was verified by Carleson and Sj\"olin \cite{cs} (also
see \cite{fe2}). In higher dimensions $d\ge 3$  the problem is still
open and partial results are known.  The conjecture was
shown to be true for $\max(p,p')\ge 2(d+1)/(d-1)$ by the argument due to
Stein \cite{fe1} (also see \cite[Ch.\! 9]{st2}) and the sharp $L^2\to
L^{2(d+1)/(d-1)}$ restriction estimate (the Stein-Tomas theorem) for the sphere \cite{tomas,stein84}. It was Bourgain \cite{b1, b2} who first made progress
beyond this result when $d=3$.
Since then,  subsequent progress had been paralleled  with those
of restriction problem.
Bilinear or multilinear generalizations  under transversality  assumptions have turned out to be most effective and fruitful tools.  These results have propelled progresses in this area and there is a large body of literature on restriction estimates and related problems. 
See \cite{tvv, tv1, w2, t5, l-v, lee1, lee2, lee3} for bilinear restriction estimates and related results,   \cite{becata, bogu, l-v1, bo-point, temur, bd, bennett, bbfl, jramos} for multilinear restriction estimates and their applications, and \cite{ gu1, shayya, gu2, li-guth, rzhang, ou-wang} (also, references therein) for most recent developments  related to polynomial partitioning method.

Concerning improved $L^p$ boundedness of the Bochner-Riesz means in higher dimensions, the sharp $L^p$ bounds for the Bochner-Riesz operator on the range $\max(p, p')\ge 2(d+2)/d$ were established by the author \cite{lee1} making use of the sharp bilinear  restriction estimate due to Tao \cite{t5}. When $d\ge 5$ further progress was recently  made by Bourgain and Guth  \cite{bogu}. They improved the range of the sharp (linear) estimates for the oscillatory integral operators of  Carleson-Sj\"olin type of which phases additionally  satisfy  elliptic condition (see \cite{stein84, b4, lee2} for earlier results) by using the multilinear estimates for oscillatory integral operators  due to Bennett, Carbery  and Tao \cite{becata} and a factorization theorem.  Also see \cite{cs, hor, stein84} and \cite[Ch 11] {st2} for the relation between the Bochner-Riesz problem and the oscillatory integral operators of Carleson-Sj\"olin type.

The following is currently the best known
result for the sharp $L^p$ boundedness of the Bochner-Riesz operator.

\begin{thm}[\cite{cs, lee1, bogu}]
\label{main} Let $d\ge 2$, $p\in [1,\infty]$, and  $p_{\circ}$ be defined by \Be
\label{p0} p_{\circ}=p_\circ(d)=2+ \frac{12}{4d-3-k} \quad \text{ if
} d\equiv k \,\, (\hspace{-3.5mm} \mod 3),\ k=-1, 0, 1. \footnote{For 
the sharp bound for $\max(p,p')\ge p_\ast$  the numerology are related as follows:  (bilinear)
$ p_\ast= 2+4/d$; (multilinear) $p_\ast= 3+3/d+O(d^{-2})$; (conjecture)
$p_\ast=2+2/d+O(d^{-2})$.} \Ee If $\max(p, p')\ge p_{\circ}$, then
\eqref{lp} holds for $\alpha> \alpha(p)$.
\end{thm}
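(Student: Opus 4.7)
The plan is to reduce the sharp $L^p$ bound \eqref{lp} to a local estimate for the sphere extension operator at large scale, and then to run the broad/narrow induction of Bourgain--Guth with the Bennett--Carbery--Tao multilinear restriction estimate as the main input. A smooth dyadic decomposition in the radial variable writes $\cR_1^\alpha=\sum_{k\ge 0}2^{-k\alpha}T_{2^{-k}}$, where $T_\delta$ is convolution with the multiplier $\psi(\delta^{-1}(1-|\xi|))$ for a fixed bump $\psi$, so it is enough to prove the single-scale estimate $\|T_\delta f\|_p\lesssim \delta^{\alpha(p)+\epsilon}\|f\|_p$ for every $\epsilon>0$. By parabolic rescaling near a point of $S^{d-1}$ and the standard Carleson--Sj\"olin wave-packet duality, this is equivalent to an extension-type inequality
\[
\|Ef\|_{L^p(B_R)}\lesssim R^{\frac{d}{p}-\frac{d-1}{2}+\epsilon}\|f\|_\infty,\qquad R=\delta^{-1},
\]
for the extension operator $E$ of the sphere (equivalently, of a Carleson--Sj\"olin phase with elliptic Hessian).

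Next, decompose $S^{d-1}$ into caps $\tau$ of diameter $\sim K^{-1}$ with $K=R^{1/N}$, and write $Ef=\sum_\tau Ef_\tau$. Fix $k\in\{2,\ldots,d\}$. Bourgain--Guth's broad/narrow dichotomy classifies each $x\in B_R$: either \emph{(broad)} there exist $k$ caps $\tau_1,\ldots,\tau_k$ whose normals are quantitatively transversal and
\[
|Ef(x)|\lesssim (\log R)^C\prod_{j=1}^k|Ef_{\tau_j}(x)|^{1/k},
\]
or \emph{(narrow)} every cap $\tau$ with $|Ef_\tau(x)|$ non-negligible lies in a $K^{-1}$-neighborhood of a $(k-2)$-dimensional affine subspace. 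Broad points are handled by the $k$-linear Bennett--Carbery--Tao theorem \cite{becata} at the endpoint $L^{2k/(k-1)}$ with only an $R^\epsilon$ loss. Narrow points are rescaled parabolically in the slab transverse to the $(k-2)$-plane and, via a factorization theorem, reduced to an extension problem of strictly lower effective dimension at a strictly smaller scale, which is controlled by the induction hypothesis.

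Summing the two contributions and optimizing over $k$ yields a self-improving inequality whose fixed point determines the admissible threshold. The main obstacle is the numerology of the narrow iteration: each narrow step drops the effective dimension by two, so the recursion closes in roughly $d/3$ rounds, and the sharp exponent depends on the remainder $d\bmod 3$, which is exactly the trichotomy $k\in\{-1,0,1\}$ and the explicit denominator $4d-3-k$ appearing in \eqref{p0}; one also has to ensure that the quality of transversality at each step is not eroded enough to invalidate the multilinear input at the next step. A secondary technical point is that the initial reduction from $\cR^\alpha$ to the extension operator must not introduce smoothing loss beyond $R^\epsilon$; Bourgain--Guth achieved this by working with oscillatory integrals for the kernel, while the Fourier-side approach taken in this paper more faithfully preserves the sharp order of smoothing, which is precisely what makes the stronger square function estimates accessible later on.
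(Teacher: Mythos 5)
Your proposal sketches the original Bourgain--Guth argument: reduce to a large-scale estimate for a sphere extension operator, apply the broad/narrow dichotomy with the $k$-linear Bennett--Carbery--Tao inequality in the broad case, rescale and invoke the factorization theorem in the narrow case. That route does prove the theorem (it is the proof in \cite{bogu}), but it is not the route the paper takes; Section 2 is expressly an alternative, frequency-side proof, and the differences are its whole point.

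There is also a genuine error in your reduction. The claimed ``equivalence'' between the $L^p\to L^p$ bound for $T_\delta$ and the $L^\infty\to L^p$ extension estimate $\|Ef\|_{L^p(B_R)}\lesssim R^{d/p-(d-1)/2+\epsilon}\|f\|_\infty$ does not hold as stated: the extension estimate only yields the multiplier bound after combining it with the Nikishin--Maurey--Pisier factorization theorem, which is precisely the step the paper is designed to avoid. The paper never leaves the multiplier setting. It defines the induction quantity $A(\delta)$ as a supremum over the class $\mathfrak{G}(\epsilon_\circ,N)$ of elliptic phases---so that the induction is stable under the parabolic rescaling in Proposition \ref{rescale}, which changes the phase---and proves \eqref{sharp} directly as an $L^p$-$L^p$ statement, with no $L^\infty$ detour and no factorization.

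The decomposition also runs in the opposite direction from yours. You fix a top multilinearity $k$ and branch into broad (transversal $k$-linear) versus narrow (caps near a $(k-2)$-plane), the top-down Bourgain--Guth scheme. The paper instead starts bilinearly at scale $\sigma_1$ (see \eqref{decomp1}) and, at each subsequent scale $\sigma_k$, splits into a maximal single-cap term controlled by $A(\sigma_k^{-2}\delta)$, a confined $k$-linear term handled by Proposition \ref{confined}/Corollary \ref{cor-confined}, and a fresh $(k+1)$-linear transversal term. This bottom-up scheme keeps the input function intact across scales, so the pieces can be summed with Rubio de Francia's inequality (Lemma \ref{vector}) to give $L^p$-$L^p$ directly. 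The paper additionally makes the ``constant on dual cubes'' heuristic rigorous via a Fourier-series expansion (Lemma \ref{scattered}), a point your proposal leaves implicit. The upshot of the paper's reformulation is not just aesthetic: it is what allows the vector-valued extension in Section 3, which is inaccessible from the oscillatory-kernel side.
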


There are also results concerning the endpoint estimates at the critical exponent $\alpha=\alpha(p)$
( for example, see  \cite{christ-rough, christ-wtBR, se1, tao-weak0}).
It was shown by  Tao  \cite{tao-weak} that the sharp $L^p$ bounds of $\cR^\alpha_t$ for  $1<p< p_\circ<2d/(d-1)$ imply 
  the weak type bounds of $\cR^{\alpha(p)}_t$ for $1<p<p_\circ$. We
  refer interested readers to \cite{lrs2} and references therein  for variants and related problems.

\subsubsection*{Square function estimate} We now consider the square function $\mathcal G^\alpha\! f$ which is defined by
$$\mathcal G^\alpha\! f(x)=
\Big(\int_0^\infty \Big|\frac{\partial}{\partial t} \cR^\alpha_t
f(x) \Big|^2 t\,dt \Big)^{1/2}.$$ It was introduced by
Stein~\cite{stein58} to study almost everywhere summability of
Fourier series.  Due to derivative in $t$ the square function behaves as if it is a multiplier of order $(\alpha-1)$ and the derivative ${\partial}/{\partial t}$ makes $L^p$ estimate possible by mitigating bad behavior near the origin.  
 In this paper we are concerned with the estimate \Be
 \label{square}
  \|\cG^\alpha
\!f\|_p\le C \|f\|_p\,.
\Ee

The $L^p$ estimate for the square function has various consequences and applications. First of all,  it is related to smoothing  estimates for solutions to dispersive equations associated to radial symbols such as wave and Schr\"odinger operators. See \cite{lrs, lrs1} for the details (also, {\it Remark} \ref{spherical}).  The sharp square function estimate implies the sharp maximal bounds for Bochner-Riesz means, which is to be discussed below in connection to pointwise convergence. It also gives   $L^p$ and maximal $L^p$ boundedness of general radial Fourier multipliers, especially the  sharp $L^p$ boundedness result of H\"ormander-Mikhlin type (see, Corollary \ref{radialmultiplier} below, \cite{cgt, caesc} and \cite{lrs}).

For  $1<p\le 2$, the inequality \eqref{square} is 
well understood. In this range of $p$, $\cG^\alpha$  is bounded on $L^p$
 if and only if  $\alpha>d(1/p-1/2)+1/2$
 (see~\cite{su1} and \cite{lrs2}).  Sufficiency can be shown by using the vector valued Calder\'on-Zygmund theory.    In contrast with the case $1<p\le 2$, if $p>2$, due to smoothing effect  resulting from averaging in
time the problem has more interesting features and  may be considered as a
vector valued extension of the Bochner-Riesz conjecture in that its sharp
$L^p$  bound also implies that of Bochner-Riesz operator.  The condition $\alpha>
\max\{1/2, d(1/2-1/p)\}$ is known to be necessary for \eqref{square} (see, for example \cite{lrs2})
and it is natural to conjecture that this is also sufficient for $p>2$.  This conjecture
in two dimensions was proven by Carbery~\cite{ca},  and in higher dimensions, $d\ge 3$,  sharp estimates  for $p> {2(d+1)}/{(d-1)}$  were obtained by
Christ~\cite{ch} and Seeger~\cite{se0} and it was later improved to the
range of $p\ge {2(d+2)}/{d}$ by the author,  Rogers, and Seeger \cite{lrs}. There are also endpoint estimates at the critical
exponent $\alpha=d/2-d/p$ and  weaker $L^{p,2}\to L^p$ endpoint
estimates were  obtained in \cite{lrs2} for $2(d+1)/(d-1)<p<\infty$.

There are two notable approaches for the study of Bochner-Riesz problem. The one which may be called the {\it spatial side approach}   is to prove the sharp estimates for the oscillatory integral operators of Carleson-Sj\"olin type \cite{cs, hor, stein84}.   These operators are natural
variable coefficient generalizations of the adjoint restriction
operators (\cite{b4, lee2, wise}) for  hypersurfaces with nonvanishing Gaussian curvature such as spheres,  paraboloids, and hyperboloids.  The other which we may call {\it frequency side approach} is more related to Fourier transform side, based on suitable decomposition in frequency side and orthogonality between the  decomposed  pieces \cite{fef2, ca, ch,christ-wtBR, se1, tao-weak, lee1}.  As has been demonstrated  in  related works the latter approach makes it possible to carry out finer analysis and to obtain refined results such as  the sharp maximal bounds, square function estimates,  and various endpoint estimates.

The  recently  improved bound for the Bochner-Riesz operator in \cite{bogu} was obtained from the
sharp estimate for the oscillatory integral operators of  Carleson-Sj\"olin type with additional elliptic  assumption.  
However,  this approach doesn't seem appropriate for the study of
the square function. Especially, there is an obvious difficulty when one tries to make use of disjointness of the
singularity of Fourier transform of $\cR_t^\alpha f$ which occurs as $t$ varies (for
example, see \eqref{sphericalsq}). This is where comes in the extra
smoothing of order $1/2$ for the square function estimate, 
which is most important for the sharp estimates for   $\cG^\alpha f$ (\cite{ca, ch, lee1, lrs}).   This kind of smoothing can be seen clearly in the Fourier transforms of Bochner-Riesz means  but  is not easy to exploit in the oscillatory kernel side.  As is already known \cite{b4, wise, lee2, bogu},
the behavior of the oscillatory integral operators of Carleson-Sj\"olin type  are more subtle 
and generally considered to be difficult to analyze when compared to
their constant coefficient  counterparts, the adjoint restriction
operators. So, we take {\it frequency side approach} in which we directly handle the associated
multiplier by working in frequency space rather than dealing with the oscillatory integral operator given by the kernel of the Bochner Riesz operator.

In this paper,  we obtain the  sharp square function estimates which are new  when $d\ge 9 $.

\begin{thm}\label{mainsquare} Let us set $p_s=p_s(d)$
by \begin{equation} \label{ps} {p_s}=2+\frac{12}{4d-6-k}, \,\,\,
d\equiv k   \,\, (\hspace{-3.5mm} \mod 3), \,\, k=0,1,2.\end{equation} Then, if  $p\ge  \min(p_s, \frac{2(d+2)}{d}) $ and
$\alpha>  d/2-d/p$, the estimate \eqref{square} holds.
\end{thm}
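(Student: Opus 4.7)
The plan is to recast the square function estimate \eqref{square} in frequency space as a vector-valued $L^p$ estimate, and then mimic the Bourgain-Guth induction on scales used in Theorem~\ref{main}, but with a vector-valued version of the Bennett-Carbery-Tao multilinear restriction estimate as input. First I would Littlewood-Paley localize so that $\widehat f$ is supported where $|\xi|\sim 1$, and use Plancherel in $t$ (after writing $t=1-\delta$ and rescaling) to see that $\|\cG^\alpha f\|_p$ is controlled by $\bigl\|\bigl(\sum_{\delta\in 2^{-\mathbb N}}\delta^{-(2\alpha-1)} |S_\delta f|^2\bigr)^{1/2}\bigr\|_p$, where $S_\delta f$ is (essentially) the Fourier multiplier whose symbol is a bump of height $1$ supported in the annular shell $\{1-2\delta\le |\xi|\le 1-\delta\}$. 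The derivative $\partial/\partial t$ gives precisely the extra half-order of smoothing — it is this $\delta^{1/2}$ which must never be thrown away and which makes the Fourier-side formulation indispensable.

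Next I would tile the shell by angular caps $\tau$ of radius $\sqrt{\delta}$ (the Carleson-Sjölin caps), writing $S_\delta f=\sum_\tau S_\delta f_\tau$. The goal becomes to prove, for each dyadic $\delta$, the scale-$\delta$ estimate
\[
\Bigl\|\Bigl(\sum_\tau |S_\delta f_\tau|^2\Bigr)^{1/2}\Bigr\|_p \lesssim \delta^{-\alpha(p)+1/2-\eps}\,\|f\|_p,
\]
for $p\ge p_s$ and every $\eps>0$; summing against $\delta^{(2\alpha-1)/2}$ in $\delta$ then closes the argument. Here one gains a factor $\delta^{1/2}$ over the corresponding scalar Bochner-Riesz estimate precisely because of the extra $t$-derivative, which is why $p_s$ is smaller than the exponent $p_\circ$ appearing in Theorem~\ref{main}.

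To prove this scale-$\delta$ bound I would run the Bourgain-Guth dichotomy at an intermediate scale $K^{-1}\ll 1$: partition the unit sphere into caps $\theta$ of radius $K^{-1}$. For each ball $B$ of radius $K^2$ in physical space either (broad case) there are $d$ caps $\theta_1,\dots,\theta_d$ transversally distributed whose contributions are all comparable to the total, in which case the vector-valued Bennett-Carbery-Tao multilinear restriction estimate --- applied to the $\ell^2$-valued functions $\{S_\delta f_\tau\}_{\tau\subset\theta_j}$ on each cap --- yields an essentially sharp bound at the multilinear exponent $2+4/(d-1)$; or (narrow case) the mass is concentrated in $O(K^{d-1})$ caps $\theta$, and on each such $\theta$ one uses parabolic rescaling to reduce to the same inequality at scale $K^2\delta$, feeding back into the induction. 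Balancing these two terms with an appropriate small power loss $K^\eps$ and iterating, as in \cite{bogu}, determines the admissible exponent $p_s$ in \eqref{ps}.

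The main obstacle is establishing the vector-valued multilinear restriction estimate on which the broad case rests. The scalar Bennett-Carbery-Tao theorem gives $L^{2/(d-1)}$ control of the geometric mean, but in the square function problem one must sum over caps $\tau\subset\theta_j$ in $\ell^2$ rather than pick out one $\tau$ per $\theta_j$; the vector-valued extension therefore has to be proved directly, tracking the $\ell^2$-norm through the combinatorial/heat-flow proof of \cite{becata}, and it has to interact cleanly with the induction-on-scales rescaling. A secondary subtlety is that the order of smoothing obtained at each induction step must be exactly $1/2$, with no logarithmic losses that cannot be absorbed into $\eps$; ensuring this requires a careful choice of frequency-side bumps so that the Carleson-Sjölin caps remain essentially orthogonal across scales when reassembled. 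Once these ingredients are in place, the rest of the argument proceeds in parallel with \cite{bogu}, only with the square function's $\ell^2$ structure carried along throughout.
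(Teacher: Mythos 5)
Your high-level plan --- prove a vector-valued Bennett--Carbery--Tao multilinear estimate and run a Bourgain--Guth-type induction on the Fourier side --- is the correct one and is indeed what the paper does, but your reduction step contains a genuine error. The object that arises from $\cG^\alpha f$ after Littlewood--Paley localization and dyadic decomposition in the distance from the singularity is not a scalar Fourier multiplier $S_\delta$ with a height-one bump in a fixed annular shell; it is the $L^2_t$-averaged operator $S_\delta f(x)=\|\phi((D_d-\psi(D',t))/\delta)f(x)\|_{L^2_t(I)}$, in which the surface $\Gamma^t$ moves with $t$. The crucial half-power gain comes precisely from this $t$-average: for each fixed frequency $\xi$ the symbol is nonzero only for a $t$-set of measure $\sim\delta$. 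It does not come, as you say, from $\partial/\partial t$, which roughens the symbol by one order and is there only to prevent degeneracy near $t=0$. If you replace the $L^2_t$-averaged $S_\delta$ by a fixed-$t$ shell multiplier, that gain is lost in $L^p$ for $p>2$, and the argument reverts to the scalar Bochner--Riesz setting with range $p\ge p_\circ$, not $p\ge p_s$. The arithmetic shows the trouble: with the scalar normalization the exponent that must be proved at fixed $\delta$ is $\delta^{-\alpha(p)-\eps}$, not $\delta^{-\alpha(p)+1/2-\eps}$, and the sign in your coefficient $\delta^{-(2\alpha-1)}$ is flipped; taken literally your scheme would close for $\alpha>\alpha(p)$, which is strictly weaker than the true threshold $\alpha>d/2-d/p=\alpha(p)+1/2$ and is false. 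Finally, writing $S_\delta f=\sum_\tau S_\delta f_\tau$ does not turn $\|S_\delta f\|_p$ into $\|(\sum_\tau|S_\delta f_\tau|^2)^{1/2}\|_p$; that passage is a reverse square function inequality, which is not available at scale $\delta$.

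Any corrected version must keep the $L^2_t$ structure throughout, which forces the multilinear estimates to be vector-valued over $t$ rather than over caps: in the paper's Propositions~\ref{multisq} and~\ref{confinedsqr} the inputs are families $G_i(\cdot,t)$ with $t$-dependent Fourier supports in moving neighborhoods $\Gamma^t(\delta)$, and $L^2_t$-norms appear on both sides. These are not obtained by "tracking $\ell^2$ through the proof of BCT"; they require their own induction-on-scales arguments. Moreover, the confined-direction refinement (Proposition~\ref{confinedsqr}, with scalar ancestor Proposition~\ref{confined}), which gives an improved bound when the normal directions of the supports cluster in a $k$-plane, is what produces the range $p\ge p_s$ rather than merely $p\ge 2(d+2)/d$ --- this ingredient is entirely absent from your outline. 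Lastly, the paper does not run the broad/narrow dichotomy you describe; it increases the degree of multilinearity step by step from bilinear upward, explicitly to avoid small-scale pieces reappearing inside large-scale ones, a problem that is harmless in the $L^\infty\to L^q$ adjoint-restriction setting of Bourgain--Guth but is fatal for $L^p\to L^p$ multiplier bounds.
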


The range here does not match with that of Theorem
\ref{mainsquare}. This  results  from additional time average
which increases the number of decomposed frequency pieces. (See Section  \ref{pf-sq}.)

\subsubsection*{Maximal estimate and pointwise convergence} A straightforward consequence of the estimate \eqref{square} is the maximal estimate
\Be\label{mlp}\|\sup_{t>0} |\cR^\alpha_t f| \|_p\le
C\|f\|_p\,\Ee
for $\alpha>\alpha(p)$, which follows from Sobolev imbedding and \eqref{square}. Hence, Theorem \ref{mainsquare} yields
the sharp maximal bounds for $p\ge p_s(d)$.  When $p\ge2$, it has been conjectured that \eqref{mlp} holds as long
as \eqref{exponent} is satisfied. The sharp $L^2$ bound goes back to Stein
\cite{stein58}. The conjecture in $\mathbb R^2$ and the sharp bounds for $p>{2(d+1)}/({d-1})$, $d\ge 3$ were
verified by the square function estimates \cite{ch, se0}. The bounds were later improved to the range
$p>2(d+2)/d$ by the author \cite{lee1} using $L^p\to L^p(L^4_t)$ estimate.
The inequality \eqref{mlp} has been studied in connection with almost everywhere convergence
of Bochner-Riesz means. However, the problem of showing $\cR^\alpha_t f\to f$ a.e. for $f\in L^p$, $p> 2$,
$\alpha>\alpha(p)$ was settled by Carbery, Rubio de Francia and Vega \cite{caruve}.
Their result relies on weighted $L^2$ estimates.  There are also results on  pointwise convergence  at the critical  $\alpha=\alpha(p)$. See \cite{lee-seeger, amarco}. 
When $1
<p<2$, by Stein's maximal theorem  almost everywhere convergence of $\cR^\alpha_t f\to f$ for $f\in L^p$ is equivalent
to $L^p\to L^{p,\infty}$ estimate for the maximal operator   and  it was shown by Tao \cite{tao-weak} that the stronger
condition $\alpha\ge (2d-1)/(2p)-d/2$
is necessary for \eqref{mlp}. Except for $d=2$ (\cite{tao-maximal})  little is known beyond the
classical result which follows from interpolation between $L^2$ ($\alpha>0$) and $L^1$ ($\alpha>(d-1)/2$) estimates.

\subsubsection*{Radial multiplier} Let $m$ be a function defined on $\mathbb R_+$.  Combining the inequality due to Carbery, Gasper and Trebels \cite{cgt} and Theorem \ref{mainsquare},   we obtain 
 the following  $L^p$ boundedness
result of H\"ormander-Mikhlin type, which is sharp in that the regularity assumption can not be improved.  A similar result for the maximal function $ f\to \sup_{t>0}
|\cF^{-1}(m(t|\cdot|) \widehat f\,)|$ is also possible thanks to the
inequality due to Carbery (see \cite{caesc}).

\begin{cor}\label{radialmultiplier}
Let $d\ge 2$, and $\varphi$ be a nontrivial smooth function with
compact support contained in $(0,\infty)$. If $ \min(p_s, \frac{2(d+2)}{d}) \le
\max(p,p')<\infty$ and $\alpha> d|1/p-1/2|$, then
$$
 \big\|\cF^{-1}[ m(|\cdot|) \widehat f\,\,]\big\|_p
  \lc\, \sup_{t>0} \|\varphi
 m(t\cdot)\|_{L^2_\alpha(\bbR)}\, \|f\|_p.
$$
\end{cor}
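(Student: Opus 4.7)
The plan is to combine the radial-multiplier-to-square-function inequality of Carbery, Gasper and Trebels \cite{cgt} with the new sharp square function bound of Theorem \ref{mainsquare}.

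First, I would reduce by duality to the case $p \ge 2$. The operator $T_m f := \cF^{-1}[m(|\cdot|)\widehat f\,]$ satisfies $(T_m)^* = T_{\bar m}$, and the quantity $\sup_{t > 0}\|\varphi\, m(t\cdot)\|_{L^2_\alpha(\bbR)}$ is manifestly invariant under $m \mapsto \bar m$. Hence boundedness on $L^p$ with the required multiplier norm is equivalent to boundedness on $L^{p'}$ with the same norm, and the hypothesis $\max(p,p') \ge \min(p_s, 2(d+2)/d)$ reduces matters to $p \ge \min(p_s, 2(d+2)/d)$ (so in particular $p \ge 2$) with $\alpha > d|1/p - 1/2| = d/2 - d/p$.

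Next, I would invoke the inequality of Carbery, Gasper and Trebels, which in its relevant form asserts that for $p \ge 2$ and $\alpha > 1/2$,
$$\big\|T_m f\big\|_p \;\lesssim\; \sup_{t > 0}\|\varphi\, m(t\cdot)\|_{L^2_\alpha(\bbR)}\,\|\cG^\alpha f\|_p.$$
This is proved via a smooth dyadic decomposition of $m$ on the positive axis, an integration by parts in the dilation parameter that converts each dyadic piece to a superposition of Bochner-Riesz multipliers of order $\alpha$, and a Cauchy-Schwarz step that produces exactly Stein's square function on the right. Throughout the range $p \ge \min(p_s, 2(d+2)/d)$ one has $d/2 - d/p \ge 1/2$ for $d \ge 2$, so the hypothesis $\alpha > d|1/p - 1/2|$ automatically guarantees $\alpha > 1/2$, and the inequality applies.

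Finally, Theorem \ref{mainsquare} applied with the same $\alpha$ yields $\|\cG^\alpha f\|_p \lesssim \|f\|_p$ throughout the reduced range of $(p, \alpha)$, and chaining this with the displayed inequality gives the corollary. The crucial feature making the plan close without loss is that the smoothness index $\alpha$ on the two sides of the CGT inequality matches exactly; this is precisely what allows the sharp Bochner-Riesz-type threshold $\alpha > d|1/p - 1/2|$ to transfer from the square function estimate to the general radial multiplier result without introducing any additional smoothness gap. No serious obstacle remains once the range of $p$ in Theorem \ref{mainsquare} is matched against the range in the hypothesis.
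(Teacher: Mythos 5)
Your proposal is correct and takes essentially the same route the paper indicates: the paper states that Corollary \ref{radialmultiplier} is obtained by combining the Carbery--Gasper--Trebels inequality with Theorem \ref{mainsquare}, and you flesh out exactly this chain, adding the routine duality reduction to $p\ge 2$ and the verification that $\alpha>d|1/p-1/2|$ implies $\alpha>1/2$ on the stated range of $p$.
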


\subsubsection*{About the paper.} In section 2, by working in frequency 
side we provide an alternative proof of  Theorem \ref{main}. Although, this doesn't  give improvement over the current range, we  include this
because it has some new consequences, clarifies several   issues, which were not clearly presented  in \cite{bogu}, and provides preparation for Section 3 in which we work  in vector valued setting. The proof in \cite{bogu} is sketchy and doesn't look readily accessible. Also the heuristic that  a function with Fourier support in a ball of radius $\sigma$ behaves  as if  it is constant on balls of radius $1/\sigma$ is now widely accepted and has important role in  the induction argument  but  it doesn't seem justified at high level of rigor.  We provide rigorous argument by making use of Fourier series  (see Lemma \ref{scattered} and Lemma \ref{smod}).  Another problem of the induction argument is  that the primary object (the associated surfaces or phase functions) changes in the course of induction. However, these issues are not properly addressed before in  literature.  We handle this matter  by introducing  a stronger induction assumption (see Remark \ref{stability}) and  carefully handling stability of various estimates.   We also use a different type multilinear decomposition which is more systematic, easier and efficient  for dealing with multiplier operators (see Section \ref{multi-scale},  especially   the discussion at the beginning of Section \ref{multi-scale}).

Section 3 is very much built on the  frequency side analysis in Section 2 as it may be regarded a vector valued extension of Section 2. Consequently, the structure of Section 3 is similar to that of Section 2 and   some of the arguments commonly work  in both sections. In such cases we try to minimize repetition while keeping readability as much as possible.  
We first obtain vector valued extensions of multilinear estimates (Proposition \ref{multisq}, Proposition \ref{confinedsqr}) which serve as  basic estimates for the sharp square function estimate.  Then, to derive linear estimate  (Theorem \ref{mainsquare})  we adapt the frequency  side approach in Section 2  to the vector valued setting and 
prove our main theorem.

Finally, oscillatory integral  approach has its own limit to prove Bochner-Riesz conjecture.   As is now well kown  (\cite{b4, wise, lee2, bogu}), the sharp $L^p$--$L^q$ estimates for the oscillatory operators of Carleson-Sj\"olin type  fail for  $q<q_\circ$, $q_\circ>  \frac{2d}{d-1}$  even under the elliptic condition on the phase \cite{wise, lee2, bogu}. Fourier transform side approach may help further development in a different direction and thanks to its flexibility may have applications to related problems. 

\subsubsection*{Notations.} The following is a list of notation we
frequently use for the rest of the paper.  
\vspace{-3mm}
\begin{enumerate} 

 \item[$\bullet$]  $C$, $c$ are constants which depend only on $d$
and may differ at each occurrence.

 \item[$\bullet$]  For $A, B\ge 0$, $A\lesssim B$ if there is a
constant $C$ such that $A\le CB$.

 \item[$\bullet$] $I=[-1,1]$ and $I^d=[-1,1]^d\subset  \mathbb R^d$.

 \item[$\bullet$]   $\tau_h f(x)=f(x-h)$ and $\tau_i f$ denotes 
 $\tau_{h_i} f$ for some $h_i\in \mathbb R^d$, $i=1,\dots, m$.

 \item[$\bullet$] We denote by ${\mathfrak q}(a, \ell)\subset
\mathbb R^{d}$ the closed cube centered at $a$ with sidelength
$2\ell$, namely, $ a+ \ell I^d$. If $\fq={\mathfrak q}(a, \ell)$,
denote   $a$, the center of $\fq$, by $\mbc(\fq)$.

 \item[$\bullet$] For $r>0$ and  a given cube or rectangle $Q$, we denote by 
 $rQ$ the cube or rectangle which is $r$-times  dilation of $Q$ from the center of $Q$.

 \item[$\bullet$]  Let $\rho\in \mathcal S(\mathbb R^d)$ be a function of which Fourier support is supported in
$\fq(0,1)$ and $\rho\ge 1$ on $\fq(0,1)$. And we also set
$\rho_{B(z,r)}(x):=\rho((\cdot-z)/r).$

 \item[$\bullet$]  For a given set $A\subset \mathbb R^d$, we define
the set $A+O(\delta)$ by
\[A+O(\delta):=\{x\in \mathbb R^d: \dist(x,A )< C\delta\}.\]

 \item[$\bullet$]  For a given dyadic cube $\fq$ and function $f$,
we define $f_\fq$ by $ \widehat {f_\fq}=\chi_\fq \widehat f.$

 \item[$\bullet$] Besides\, $\widehat{}\,$ and ${}^\vee$, \, $\mathcal
F(\cdot)$, $\mathcal F^{-1}(\cdot)$ also denote the Fourier
transform, the inverse Fourier transform, respectively.

 \item[$\bullet$] For a smooth function $G$ on $I^k$\,   $
\|G\|_{C^N(I^k)}:= max_{|\alpha|\le N} \max_{x\in I^k}
|\partial^\alpha G(x)|$
\end{enumerate}

\subsection*{Acknowledgement}  The research of the author was partially supported by NRF (Republic of Korea) grant  No. 2015R1A2A2A05000956. The author would like thank Andreas Seeger for discussions on related problems.

\section{Estimates for multiplier operators}\label{multiplier}
In this section we consider the  multiplier operators of
Bochner-Riesz type which are associated with  elliptic type surfaces.
They are natural generalizations of the Bochner-Riesz operator
$\cR^\alpha_1$. We prove the sharp $L^p$ boundedness of these of
operators and this provides an alternative proof of Theorem
\ref{main}. Basically  we adapt  the induction argument in \cite{bogu}. However, compared to (adjoint) restriction counterpart the induction argument becomes less obvious when we consider it for Fourier multiplier operator. However, exploiting sharpness of bounds for  frequency localized operator $T_\delta$ (see \eqref{tdel}, \eqref{sharp})  we manage to carry out a similar argument.  See Section \ref{closinginduction}.

 From now on we write \[ \xi=(\zeta, \tau)\in \mathbb
R^{d-1}\times \mathbb R.\] Let $\psi$ be a smooth function defined
on $I^{d}$ and $\chi_\circ$ be a smooth function supported in a
small neighborhood of the origin. We consider the multiplier
operator $T^\alpha=T^\alpha(\psi)$ which is defined by
\[\cF(T^\alpha\! f)(\xi)=\big(\tau-\psi(\zeta)\big)_+^\alpha\chi_\circ(\xi)\widehat f(\xi).\]
By a finite decomposition, rotation and translation and by
discarding harmless smooth multiplier, it is easy to see that the $L^p$
boundedness of $\cR_1^\alpha$  is equivalent to that of $T^\alpha$
which is given by $\psi(\zeta)=1-(1-|\zeta|^2)^{1/2}$. A natural generalization of the Bochner-Riesz problem is as follows: 
If $\det H\psi\neq 0$
on the support of $\chi_\circ$
(here, $H\psi$ is  the Hessian
matrix of $\psi$), we may conjecture 
 that, for $1\le p\le \infty$, $p\neq 2$,  
\Be\label{talpha} \|T^\alpha\! f\|_p\le C\|f\|_p \Ee if and only if
$\alpha>\alpha(p)$. From explicit  computation of the kernel of
$T^\alpha$  it is easy to see that the condition $\alpha>\alpha(p)$ is
necessary for \eqref{talpha}.  However, in this paper we only work with specific choices of 
$\psi.$

\subsection{Elliptic function}
Let us set \[\psi_\circ(\zeta)=|\zeta|^2/2.\]  For
$0<\epsilon_\circ\ll 1/2$ and an integer  $N\ge 100d$ we denote by
$\fG(\epsilon_\circ, N)$ the collection of smooth function which is
given by
\[ \fG(\epsilon_\circ, N)=\{ \psi: \|\psi-\psi_\circ\|_{C^N(I^{d-1})}\le \epsilon_\circ \}. \]
If $\psi\in \fG(\epsilon_\circ, N)$ and $a\in \frac12I^{d-1}$,
$H\psi(a)$ has eigenvalues $\lambda_1,\dots,\lambda_{d-1}$ close to
$1$ and we may write $H\psi(a)=P^{-1}DP$ for an orthogonal matrix
$P$ while $D$ is a diagonal matrix with its diagonal entries
$\lambda_1,\dots,\lambda_{d-1}$.  We denote by $\sqrt{H\psi(a)}$
the matrix $P^{-1}D'P$ where $D'$ is the diagonal matrix with its
diagonal entries $\sqrt{\lambda_1},\dots,\sqrt{\lambda_{d-1}}$. So,
$(\sqrt{H\psi(a)}\,)^2=H\psi(a)$.

For $\psi\in  \fG(\epsilon_\circ, N)$, $a\in \frac12I^{d-1}$, and
$0<\eps\le 1/2$, we define
\begin{equation} \label{normalization}\psi_a^\varepsilon (\zeta)=\frac1{\eps^2} \Big(\psi\big(\eps
\big[\sqrt{H\psi(a)}\,\big]^{-1} \zeta+a\big)-\psi(a)-\eps
\nabla\psi(a)\cdot \big[\sqrt{H\psi(a)}\,\big]^{-1}
\zeta\Big).\end{equation} Since $\psi\in \fge $, by Taylor's theorem
it is easy to see that $\|\psi_a^\eps-\psi_\circ\|_{C^N(I^{d-1})}\le
C\eps$ for $\psi \in \fge$. \footnote{Indeed, since
$|\partial^\alpha(\psi_a^\epsilon-\psi_\circ)|\lesssim
\epsilon^{|\alpha|-2}$ for any multiindex $\alpha$, we need only to
show $|\partial^\alpha(\psi_a^\eps-\psi_\circ)|\lesssim  \epsilon$ 
for $|\alpha|=0,1, 2$. This follows by Taylor's theorem since $N\ge
100d$.}  Hence we get  the following.

\begin{lem}\label{normal} Let $\psi \in \fge$ and $a\in \frac12 I^{d-1}$. Then there is a constant
$\kappa=\kappa(\epsilon_\circ, N)$, independent of $a,\,\psi$,  such that
$\psiaep\in \fge$ provided that $0<\eps\le \kappa$.
\end{lem}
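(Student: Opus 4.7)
The plan is to verify that $\psi_a^\eps - \psi_\circ$ has all derivatives up to order $N$ bounded by $C\eps$ on $I^{d-1}$ with a constant depending only on $\eps_\circ$ and $N$, so that choosing $\kappa$ small forces $\|\psi_a^\eps - \psi_\circ\|_{C^N(I^{d-1})} \le \eps_\circ$. The affine normalization in \eqref{normalization} is designed precisely so that $\psi_a^\eps$ agrees with $\psi_\circ$ to second order at the origin; indeed a direct calculation gives $\psi_a^\eps(0)=0$, $\nabla \psi_a^\eps(0)=0$, and $H\psi_a^\eps(0) = [\sqrt{H\psi(a)}]^{-T} H\psi(a)[\sqrt{H\psi(a)}]^{-1}= I$, matching $\psi_\circ$, $\nabla\psi_\circ$, $H\psi_\circ$ at $0$. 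First I would record that since $\psi\in \fge$ with $N\ge 100d$, the eigenvalues of $H\psi(a)$ lie in $[1-C\eps_\circ,1+C\eps_\circ]$, so $[\sqrt{H\psi(a)}]^{-1}$ is a well defined matrix with norm bounded in terms of $\eps_\circ$. Consequently, after fixing $\kappa$ small enough (depending only on $\eps_\circ$), the affine map $\zeta \mapsto \eps[\sqrt{H\psi(a)}]^{-1}\zeta + a$ sends $I^{d-1}$ into $I^{d-1}$ for all $\eps\le \kappa$ and all $a\in \tfrac12 I^{d-1}$, so the derivative bounds coming from $\psi\in\fge$ apply throughout.

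Next, I would compute derivatives of $\psi_a^\eps$ by the chain rule. For any multi-index $\alpha$ with $|\alpha|\ge 1$,
\[
\partial^\alpha \psi_a^\eps(\zeta) \,=\, \eps^{|\alpha|-2}\sum_{|\beta|=|\alpha|} c_{\alpha\beta}(a)\,(\partial^\beta \psi)\bigl(\eps[\sqrt{H\psi(a)}]^{-1}\zeta + a\bigr),
\]
where the coefficients $c_{\alpha\beta}(a)$ are polynomial expressions in the entries of $[\sqrt{H\psi(a)}]^{-1}$ and are therefore bounded uniformly in $a$ and $\psi\in \fge$. Because $\|\psi\|_{C^N(I^{d-1})}\le \|\psi_\circ\|_{C^N}+\eps_\circ$, this yields
\[
|\partial^\alpha \psi_a^\eps(\zeta)| \,\le\, C(\eps_\circ,N)\,\eps^{|\alpha|-2}\qquad \text{for all }|\alpha|\le N,\ \zeta\in I^{d-1}.
\]
For $|\alpha|\ge 3$ the factor $\eps^{|\alpha|-2}\le \eps$, and since $\partial^\alpha \psi_\circ \equiv 0$ for $|\alpha|\ge 3$, we obtain $|\partial^\alpha(\psi_a^\eps-\psi_\circ)(\zeta)|\lesssim \eps$ directly.

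The remaining cases $|\alpha|=0,1,2$ are handled by Taylor's theorem expanded at the origin, using the matching computed above. For $|\alpha|=2$, $\partial^\alpha(\psi_a^\eps-\psi_\circ)(0)=0$, so
\[
|\partial^\alpha(\psi_a^\eps-\psi_\circ)(\zeta)| \,\le\, |\zeta|\,\sup_{I^{d-1}}|\nabla \partial^\alpha(\psi_a^\eps-\psi_\circ)|\,\le\, C(\eps_\circ,N)\,\eps.
\]
For $|\alpha|=1$ the value and the gradient of $\partial^\alpha(\psi_a^\eps-\psi_\circ)$ vanish at $0$, so Taylor to second order gives the same bound with an extra $|\zeta|^2$; and for $|\alpha|=0$ the value, gradient, and Hessian vanish at $0$, so Taylor to third order controls $\psi_a^\eps-\psi_\circ$ by $|\zeta|^3 \sup|\nabla^3 \psi_a^\eps|\lesssim \eps$. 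Combining all ranges of $|\alpha|$ gives $\|\psi_a^\eps-\psi_\circ\|_{C^N(I^{d-1})}\le C(\eps_\circ,N)\eps$, and choosing $\kappa = \kappa(\eps_\circ,N)$ so that $C(\eps_\circ,N)\kappa \le \eps_\circ$ completes the proof.

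There is no genuine obstacle here; the only subtle point is bookkeeping, namely verifying that the domain $\eps[\sqrt{H\psi(a)}]^{-1}I^{d-1}+a$ on which we need derivative estimates for $\psi$ remains inside $I^{d-1}$, which is arranged by the initial choice of $\kappa$.
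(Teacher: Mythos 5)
Your proof is correct and follows essentially the same route as the paper: the paper's footnote argues that $|\partial^\alpha(\psi_a^\eps-\psi_\circ)|\lesssim \eps^{|\alpha|-2}$ handles $|\alpha|\ge 3$ and that Taylor's theorem handles $|\alpha|=0,1,2$, which is exactly your bookkeeping (you merely make the second-order matching at the origin explicit and spell out the Taylor remainders). One small inaccuracy that does not affect the argument: your chain-rule identity for $\partial^\alpha\psi_a^\eps$ as stated is only correct for $|\alpha|\ge 2$ — for $|\alpha|=1$ there is an additional constant term from the subtracted linear piece in \eqref{normalization} — but the resulting bound $|\partial^\alpha\psi_a^\eps|\lesssim\eps^{|\alpha|-2}$ still holds and is all you use for $|\alpha|\ge 3$, while the $|\alpha|\le 2$ cases go through Taylor and the vanishing at the origin.
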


\begin{rem}
If $\psi$ is smooth and $H\psi(a)$ has $d-1$ positive
eigenvalues, after finite decomposition and affine
transformations we may assume $\psi\in \fge$ for arbitrarily small
$\epsilon_\circ$ and large $N$. Indeed, for given $\eps>0$,
decomposing the multiplier
$\big(\tau-\psi(\zeta)\big)_+^\alpha\chi_\circ(\xi)$ to multipliers
supported in balls of small radius $\eps/C$ with some large $C$, one may assume
that $\mathcal F f$ is supported in $B((a,\psi(a)), \varepsilon/C)$.
Then, the change of variables \eqref{linearchange} transforms
$\psi\to \psi_a^\eps$ and give rise to  a new multiplier operator  $T^\alpha(\psi_a^\eps)$ and, as can be easily seen by  a simple change of variables, the  operator norm   $\|T^\alpha(\psi_a^\eps)\|_{p\to p}$  remains same. (See the proof of Proposition \ref{rescale}.)  By Lemma \ref{normal} we see   $\psi_a^\eps\in \fge$  if $\varepsilon$ is small enough. 
\end{rem}

\subsection{multiplier operator with localized frequency} Let $\phi$ be a smooth function
supported in $2I$. For $\delta>0$, $\psi\in \fge$,  and $f$ of which
Fourier transform is supported in $\frac12 I^d$ we define the
(frequency localized) multiplier operator $T_\delta =
T_\delta(\psi)$ by \Be \label{tdel}\widehat {T_\delta
f}(\xi)=\phi\Big(\frac{ \tau-\psi(\zeta)}\delta\Big) \widehat
f(\xi).
 \Ee
As is wellknown, the $L^p$ bound for $T_\delta$ largely depends on curvature  of  the surface $\tau=\psi(\zeta)$. By decomposing the
multiplier dyadically  away from the singularity $\tau=\psi(\zeta)$,
in order to prove \eqref{talpha}  for $p>
2d/(d-1)$ and $\alpha>\alpha(p)$,   it is enough to  show that,   for any $\epsilon>0$,
 \Be \label{sharp} 
\|T_\delta f\|_p\le C\delta^{\frac dp-\frac{d-1}2-\epsilon} \|f\|_p
\Ee  
whenever $\widehat f$ is supported in
$\frac12 I^d$.
The following recovers the sharp $L^p$ bound up to the currently best known range in \cite{bogu}.

\begin{prop} \label{localfrequency} Let $\epsilon>0$.
If $p\ge p_\circ(d)$ and $\epsilon_\circ$ is small enough, there is an
$N=N(\epsilon)$ such that \eqref{sharp} holds uniformly  provided
that $\psi\in\fge$ and $\supp\, \widehat f\subset \frac12 I^d$.
\end{prop}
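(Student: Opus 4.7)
The plan is to prove the bound by induction on $\delta^{-1}$, with the induction hypothesis formulated \emph{uniformly over the full class $\fge$}. Fix $\eps>0$; I maintain inductively that for all $\sigma\ge\delta$, all $\psi\in\fge$, and all $f$ with $\supp\widehat f\subset\tfrac12 I^d$,
\[
\|T_\sigma(\psi)f\|_p\le C_\eps\,\sigma^{d/p-(d-1)/2-\eps}\|f\|_p.
\]
Lemma~\ref{normal} is essential here: when the parabolic normalization $\psi\mapsto\psi_a^{\eps_1}$ is performed in the narrow case below, the new phase again lies in $\fge$, so the hypothesis continues to apply at the next scale. This is the strengthened inductive statement indicated in Remark~\ref{stability}.

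The main step is a Bourgain--Guth broad/narrow decomposition at an auxiliary scale $K^{-1}$, with $K=\delta^{-\eta}$ for a small $\eta=\eta(\eps)$ to be chosen. Decompose the frequency support of $f$ by $K^{-1}$-caps $\fq\subset\mathbb R^{d-1}$ and write $T_\delta f=\sum_\fq T_\delta f_\fq$. At each $x$, after pigeonholing we are in one of two regimes: the \emph{broad} case, in which $d$ caps $\fq_1,\dots,\fq_d$ exist with pairwise transverse surface normals (with transversality constant $\gtrsim K^{-O(1)}$) such that $|T_\delta f(x)|\lesssim K^{O(1)}\prod_{j=1}^d|T_\delta f_{\fq_j}(x)|^{1/d}$; or the \emph{narrow} case, in which all caps making a non-negligible contribution lie in the $K^{-1}$-neighborhood of some lower-dimensional affine subvariety of the surface $\tau=\psi(\zeta)$.

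In the broad regime I would apply the frequency-side multilinear multiplier estimate --- the multiplier counterpart of the Bennett--Carbery--Tao multilinear restriction theorem \cite{becata}, established in Section~\ref{multi-scale} uniformly over phases in $\fge$ --- which delivers the sharp $L^p$ bound for the geometric mean at exponent $p_\circ(d)$ with only a $K^{O(1)}$ loss. In the narrow regime, after a linear change of variables and parabolic rescaling in the directions of concentration, each contributing piece of $T_\delta(\psi)$ becomes an operator of the form $T_{K^2\delta}(\widetilde\psi)$ with $\widetilde\psi\in\fge$ (by Lemma~\ref{normal}), acting on a function whose Fourier transform is again supported in $\tfrac12 I^d$; hence the inductive hypothesis applies at the coarser scale $K^2\delta$. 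Balancing the broad-case loss against the narrow-case gain from rescaling (summed over the $K^{O(1)}$ narrow subregions) and choosing $\eta$ small enough closes the induction for $p\ge p_\circ(d)$.

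I anticipate two technical obstacles. The first is to turn the heuristic "functions with Fourier support in a $\sigma$-ball are essentially constant on dual $\sigma^{-1}$-balls" into a rigorous device, as it is repeatedly invoked when switching between physical-side and frequency-side localizations in the broad/narrow analysis; this will be handled by explicit Fourier series expansions on the dual tubes, as in Lemmas~\ref{scattered} and~\ref{smod}. The second, more delicate point is that the phase $\psi$ changes at each rescaling, so the only way to iterate is to fix $\epsilon_\circ,N$ once at the outset, verify that every rescaling preserves $\fge$ (again via Lemma~\ref{normal}), and confirm that the transversality constants in the multilinear estimate remain uniformly bounded below over $\fge$. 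Tracking this stability carefully so that errors do not accumulate across the iteration is, as the author emphasizes in Remark~\ref{stability}, the main subtlety of the argument, and will be addressed in Section~\ref{closinginduction}.
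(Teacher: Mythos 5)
Your broad outline is in the right spirit---a Bourgain--Guth-type multilinear induction on scale with the hypothesis carried uniformly over $\fge$, parabolic rescaling via Lemma~\ref{normal}, Fourier-series local constancy, and careful tracking of stability---and you correctly identify Remark~\ref{stability} and Lemmas~\ref{scattered}, \ref{smod} as the relevant technical devices. However, the proposal diverges from the paper's proof in two substantive ways, and in both cases the difference is not cosmetic.

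First, you adopt the original Bourgain--Guth single-scale binary broad/narrow dichotomy at a fixed auxiliary cap size $K^{-1}=\delta^{-\eta}$, with the broad case controlled by a $d$-linear estimate and the narrow case handled by rescaling. The paper deliberately replaces this with a \emph{multi-scale} decomposition over a chain $\sigma_1\gg\sigma_2\gg\dots\gg\sigma_m$, passing from $k$-transversal to $(k+1)$-transversal products as it descends (Section~\ref{multi-scale}, \eqref{scale-k1}--\eqref{scale-22}). The author's stated reason is that the standard top-down BG dichotomy causes small-scale pieces to reappear inside larger-scale ones; for adjoint restriction one escapes by estimating the narrow part against $\|f\|_\infty$ and then invoking the Bourgain--Guth factorization theorem to return to $L^p$, but this mechanism fails for multiplier operators and costs in the bound. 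Your proposal neither carries an $L^\infty$ reduction nor explains how the narrow contributions would be summed over the many subregions at scale $K^{-1}$ without such a device; as written, the narrow-case bookkeeping does not close.

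Second, and more fundamentally, your narrow case is treated solely by parabolic rescaling: concentrate near a lower-dimensional subvariety, change variables, and apply the induction at scale $K^2\delta$. This alone does not reach $p_\circ(d)$. The paper's argument requires, in addition to the $k$-linear estimate (Proposition~\ref{multilp}), the \emph{confined-direction} multilinear estimate Proposition~\ref{confined} and its square-function form Corollary~\ref{cor-confined}. This is precisely what is used in Section~\ref{closinginduction} (see \eqref{est4}--\eqref{est6}) to exploit the fact that the $\fM^k f$ terms come from pieces whose normals lie in an $O(\sigma_k)$-neighborhood of a $k$-plane: the confined estimate converts this geometric information into an $\ell^p$-square sum of rescaled pieces, which is then fed into Proposition~\ref{rescale} and Lemma~\ref{vector}. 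The range $p\ge\frac{2(2d-k+1)}{2d-k-1}$ appearing in \eqref{prange}, whose optimization over $k$ and $m$ produces $p_\circ(d)$, comes from this confined estimate; with only the transversal $k$-linear estimate and rescaling, the numerology lands at the weaker $2(d+1)/(d-1)$ range. Your proposal omits this ingredient entirely, so the claim that ``choosing $\eta$ small enough closes the induction for $p\ge p_\circ(d)$'' is not substantiated.
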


It is possible to remove  loss of $\delta^{-\epsilon}$  in \eqref{sharp} by the $\epsilon$-removal argument in \cite{tao-weak} (in particular, see Section 4).

\subsubsection*{Induction quantity} To control $L^p$ norm of $T_\delta$, for
$0<\delta$, we define $A(\delta)= A_p(\delta)$ by
\begin{align*}
A(\delta)={\sup} \Big\{ \| T_\delta(\psi) f\|_{L^p}: \psi\in \fge,\,
  \|f\|_p\le 1,\, \supp \widehat f\subset \frac12
I^{d}
\Big\}.  \end{align*}

\begin{rem}\label{stability}
Though the induction argument in \cite{bogu} heavily relies
on stability of the multilinear estimates, such issue doesn't
seem properly addressed. In particular, after
(multiscale) decomposition and rescaling  the associated phase
functions (or surfaces) are no longer fixed phase functions (or
surfaces).\footnote{It is only true for the paraboloid.} This
requires the induction quantity defined over a class of phase
functions or surfaces. This leads us to consider $A(\delta)$. 
\end{rem}


From the estimate for  the kernel of $T_\delta$ (see Lemma \ref{kerneldelta1}), it is easy to see that
$A(\delta)\le C$ uniformly in $\psi\in\fge $ if $\delta\ge 1$ and
$A(\delta)\le C\delta^{-\frac{d-1}2}$ if $0<\delta\le 1$, because
$L^1$-norm of the kernel is uniformly $O(\delta^{-\frac{d-1}2})$.
 To prove Proposition  \ref{localfrequency}, we need to show
$A(\delta)\lesssim \delta^{\frac dp-\frac{d-1}2-\epsilon}$ for any $\epsilon>0$. However,
due to lack of monotonicity $A(\delta)$ is not suitable to close
induction.  So, we need to modify $A(\delta)$. For $\beta,$ $\delta>0$, we  define
\[ \mathcal A^\beta(\delta)=\cA_p^\beta(\delta):=\sup_{\delta <  s \le 1} s^{ \frac{d-1}2-\frac
dp +\beta}\,\,
 A_p(s).\]
Hence, Proposition \eqref{localfrequency} follows if we show
$\mathcal A^\beta(\delta)\le C$ for any $\beta>0$.

The following lemma makes precise the heuristic that the bound of
$T_\delta$ improves if it acts on functions of which Fourier
transforms are supported a smaller set. However, this becomes less obvious for multiplier operator  when it is compared to restriction (adjoint) operator ({cf.} \cite{bogu}).  This type of improvement is basically due to parabolic rescaling structure of the operator, and 
generally appears in $L^p$-$L^q$ estimates for $p,q$ satisfying
$(d+1)/q<(d-1)(1-1/p)$, $p\le q$, which are not invariant under the parabolic rescaling.  
The following is important for induction argument to work. 

\begin{prop}\label{rescale} Let $0< \delta\ll 1$, $\psi\in \fge$, and $(a,\mu)\in \mathbb R^{d-1}\times \mathbb R$. Suppose
that $\supp \widehat f\subset
{\mathfrak q}((a, \mu), \eps)\subset  \frac12 I^{d} $, $0<\epsilon<1/2$ and $\delta\le   (10)^{-2}\eps^2$. Then,
 there is a $\kappa=\kappa(\epsilon_\circ, N)$ such that  for $0<\eps\le
\kappa$ 
\Be\label{rescalee}   \|T_\delta  f\|_{p}\le 
CA(\eps^{-2} \delta) \|f\|_p\Ee holds  with $C$, independent of
$\psi$ and $\eps$.
\end{prop}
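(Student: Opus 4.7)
The plan is to carry out a parabolic rescaling tailored to the cube $\fq((a,\mu),\eps)$ so that $T_\delta(\psi)$ on $f$ is identified (up to a harmless modulation and dilation) with $T_{\eps^{-2}\delta}(\psi_a^\eps)$ on a suitably rescaled $g$, and then to invoke the definition of $A(\eps^{-2}\delta)$. The normalization \eqref{normalization} is precisely what one needs: with $M=\sqrt{H\psi(a)}$ and $\xi_0=(a,\psi(a))$, the affine map
\begin{equation*}
\Phi(\eta,\sigma):=\xi_0+L(\eta,\sigma),\qquad L(\eta,\sigma):=\bigl(\eps M^{-1}\eta,\ \eps\nabla\psi(a)\cdot M^{-1}\eta+\eps^2\sigma\bigr),
\end{equation*}
yields $\tau-\psi(\zeta)=\eps^2(\sigma-\psi_a^\eps(\eta))$ on substitution, so that $\phi((\tau-\psi(\zeta))/\delta)=\phi((\sigma-\psi_a^\eps(\eta))/(\eps^{-2}\delta))$ is exactly the multiplier of $T_{\eps^{-2}\delta}(\psi_a^\eps)$; moreover Lemma \ref{normal} places $\psi_a^\eps\in\fge$ as soon as $\eps\le\kappa$.

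I would execute this in three steps. Step 1: reduce to the case $|\mu-\psi(a)|\le C\eps$ (otherwise $T_\delta f=0$, since $|\psi(\zeta)-\psi(a)|\lesssim\eps$ on $\supp\widehat f$ and $\delta\le\eps^2/100$), and then insert a smooth cutoff $\tilde\phi((\tau-\psi(\zeta))/\eps^2)$ with $\tilde\phi\equiv 1$ on $[-2,2]$ into the multiplier; this leaves $T_\delta f$ unchanged, but the resulting function $\tilde f$ obeys $\|\tilde f\|_p\le C\|f\|_p$ via a standard kernel estimate for this variable-coefficient Fourier multiplier. Step 2: define $g$ by $\tilde f(x)=e^{2\pi i x\cdot\xi_0}g(L^T x)$ (equivalently $\widehat g=|\det L|\,\widehat{\tilde f}\circ\Phi$), so that a direct change of variable in the Fourier representation gives
\begin{equation*}
T_\delta(\psi)\tilde f(x)=e^{2\pi i x\cdot\xi_0}\bigl[T_{\eps^{-2}\delta}(\psi_a^\eps)g\bigr](L^T x),
\end{equation*}
and the common Jacobian $|\det L|^{-1/p}$ cancels in the ratio $\|T_\delta(\psi)\tilde f\|_p/\|\tilde f\|_p=\|T_{\eps^{-2}\delta}(\psi_a^\eps)g\|_p/\|g\|_p$. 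Step 3: observe that $\widehat g$, being the pullback of $\widehat{\tilde f}$, is supported in $\{|\eta|\le 1+C\epsilon_\circ,\ |\sigma-\psi_a^\eps(\eta)|\le 4\}$, hence (after a harmless fixed dilation absorbed into $\tilde\phi$) in $\tfrac12 I^d$; the definition of $A(\eps^{-2}\delta)$ then yields $\|T_{\eps^{-2}\delta}(\psi_a^\eps)g\|_p\le A(\eps^{-2}\delta)\|g\|_p$, and combining Steps 1--3 produces $\|T_\delta f\|_p\le CA(\eps^{-2}\delta)\|f\|_p$ with $C$ independent of $\psi$ and $\eps$.

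The main obstacle will be the $L^p$-boundedness of the variable-coefficient cutoff $\tilde\phi((\tau-\psi(\zeta))/\eps^2)$ in Step 1, which, because $\psi$ couples $\zeta$ and $\tau$, is not a tensor-product multiplier. I would handle it by factoring the convolution kernel as a product of a one-variable Fourier transform in $\tau$ (at scale $\eps^{-2}$) and an oscillatory integral in $\zeta$ with non-degenerate phase $t\psi(\zeta)+x\cdot\zeta$, bounded by stationary phase using $\psi\in\fge$ and the separation of scales $\delta\le\eps^2/100$. The remaining bookkeeping of Jacobians and Fourier supports is routine and made transparent by the normalization \eqref{normalization}, so the whole argument ultimately rests on the stability of the class $\fge$ under $\psi\mapsto\psi_a^\eps$ furnished by Lemma \ref{normal}.
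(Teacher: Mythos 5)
Your plan is structurally the same as the paper's: insert a smooth cutoff at scale $\eps^2$ in the $\tau$-direction (to control the $\tau$-support after the anisotropic rescaling), then apply the affine change of variables $L$ to produce $T_{\eps^{-2}\delta}(\psi_a^\eps)$ with $\psi_a^\eps\in\fge$ via Lemma~\ref{normal}, and invoke the definition of $A(\eps^{-2}\delta)$. The point of departure — and the place where the paper is quietly more efficient — is the choice of cutoff. You insert $\tilde\phi\bigl((\tau-\psi(\zeta))/\eps^2\bigr)$, which is a genuinely curved multiplier; after the change of variables it becomes $\tilde\phi(\tau-\psi_a^\eps(\zeta))$, still variable-coefficient, and in either frame you must establish a uniform-in-$\eps$ $L^p$-boundedness by a kernel estimate. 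You correctly flag this as the main obstacle, and your proposed fix (factor the kernel into a one-variable Fourier transform in $\tau$ at scale $\eps^{-2}$ times an oscillatory integral in $\zeta$ over the $\eps$-support of $\widehat f$, then bound by non/stationary phase) does work; note, though, that the crude bound $|K|\lesssim\delta\sigma^{d-1}(1+\delta|x|)^{-M}$ as literally stated in Lemma~\ref{kerneldelta1} is not sufficient here (it gives $L^1$-norm $\sim\eps^{1-d}$ with $\delta=\eps^2$, $\sigma=\eps$), so you really do need the anisotropic decay — scale $\eps^{-1}$ in $x'$ and $\eps^{-2}$ in $x_d$ — that the integration-by-parts proof actually yields. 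The paper avoids this entirely by exploiting Taylor's theorem: on the support of $\phi\bigl((\tau-\psi(\zeta))/\delta\bigr)\widehat f$, the nonlinear quantity $\tau-\psi(\zeta)$ agrees with its affine linearization $\tau-\psi(a)-\nabla\psi(a)\cdot(\zeta-a)$ up to $O(\eps^2)$, so one may insert instead the \emph{linearized} cutoff $\widetilde\chi\bigl((\tau-\psi(a)-\nabla\psi(a)\cdot(\zeta-a))/\eps^2\bigr)$, which under $L$ becomes simply $\widetilde\chi(\tau)$ — a one-variable, tensor-product multiplier whose $L^p$-boundedness is immediate with no oscillatory integral at all. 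Both routes are correct; the paper's choice of cutoff buys a cleaner Step~1 at no cost, while yours requires the sharper form of the kernel estimate to close.
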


\begin{proof} Decomposing ${\mathfrak q}(a, \eps)$ into as many as $O(d^d)$, we may assume that
$\widehat f$ is supported in $\mathfrak q((a,\mu),\frac{\eps}{10d})$.   Since $\psi\in \fge$ and $\supp \widehat f\subset \mathfrak q((a,\mu),\eps/(10d))$, by Taylor's theorem we note  that 
$\phi(\frac{ \tau-\psi(\zeta)}\delta) \widehat
f(\xi)$ is supported  in the set 
\[ \Big\{ (\xi, \tau): | \tau -\psi(a)-\nabla\psi(a)\cdot (\zeta-a)| \le \frac{(1+\epsilon_0)   \eps^2}{2 \times 10^2}\Big\}\,. \] 
Hence, we may write 
\[ \phi\Big(\frac{ \tau-\psi(\zeta)}\delta\Big) \widehat
f(\xi)=   \phi\Big(\frac{ \tau-\psi(\zeta)}\delta\Big) \widetilde \chi  \Big(\frac{\tau -\psi(a)-\nabla\psi(a)\cdot (\zeta-a)}{\eps^2}\Big) \widehat
f(\xi),  \] 
where $\widetilde \chi$ is a smooth function supported in $\frac12 I$ such that $\widetilde \chi=1$ on $\frac14 I$. 
Let us set $ M=\big(\sqrt{H\psi(a)}\,\big)^{-1}$ and
make the change of variables in the frequency domain
\Be\label{linearchange}(\zeta,\tau)\to L(\zeta,\tau)=\big(\eps M\zeta+a,\,
\eps^2 \tau +\psi(a)+\eps \nabla\psi(a)\cdot M\zeta\big).\Ee Then it
follows that
\[\mathcal F\big( {T_\delta(\psi) f}\big)( L\xi)=\phi\Big(
\frac{\tau-\psi_a^\eps(\zeta)}{\eps^{-2}\delta}\Big)
\widetilde \chi(\tau) \widehat f(L\xi)\,.\]
 Since $L$ is an invertible affine transformation it is easy to see  $\| \mathcal F^{-1} (\widehat g(L\cdot))\|_p= \eps^{(d+1)(\frac1p-1)} \|g\|_p$ for any $g$.  We  also note that $\supp   ( \widetilde \chi(\tau)\widehat  
f(L\cdot)) \subset \frac{1}{2} I^d$ and   by Lemma \ref{normal} there exists a $\kappa>0$ such that 
$\psiaep\in \fge$ if $0<\eps\le \kappa$ whenever $\psi\in \fge$. So, by the definition of $A(\delta)$ it follows that, for $0<\eps\le \kappa$, 
 \begin{align*}
 &\|T_\delta(\psi) f\|_{p}= \eps^{(d+1)(1-\frac1p)}\Big\|  \mathcal F^{-1}\Big( \phi\Big(
\frac{\tau-\psi_a^\eps(\zeta)}{\eps^{-2}\delta}\Big)
\widetilde \chi(\tau) \widehat f(L\xi) \Big)\Big\|_p 
\\ 
&\le \eps^{(d+1)(1-\frac1p)}  A(\eps^{-2}\delta) \|  \mathcal F^{-1}(\widetilde \chi(\tau) \widehat f(L\xi) )\|_p
\le CA(\eps^{-2}\delta) \|f\|_p. 
\end{align*}
For the last inequality we also use the trivial bound $\|\mathcal F^{-1}(\widetilde \chi(\tau) \widehat g)\|_p\le C\|g\|_p$  for any $1\le p\le \infty$.   The inequality is valid for any  $\psi\in \fge$. This gives the
desired bound.
\end{proof}

We will need the following estimate which  is easy to show by making use of Rubio de Francia's one
dimensional  inequality \cite{ru0}.

\begin{lem}\label{vector} Let $\{\mathfrak q\}$ be a collection of (distinct)  dyadic cubes of the same side length $\sigma$.
Let $2\le p< \infty$. Then, there is a constant $C$, independent of
the collection $\{\mathfrak q\}$, such that
\[ \Big( \sum_{\mathfrak q} \|\mathcal F^{-1} (\widehat f\chi_{\mathfrak q})\|_p^p\Big)^\frac1p\le C\|f\|_p.\]
\end{lem}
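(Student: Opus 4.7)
The strategy is to reduce the $d$-dimensional estimate to Rubio de Francia's one-variable inequality by exploiting the product structure of dyadic cubes of common sidelength $\sigma$.  Every such cube $\fq$ has the form $\prod_{j=1}^{d}[\sigma k_j,\sigma(k_j+1))$ for some $k=(k_1,\dots,k_d)\in\mathbb Z^d$, so if $P^{(j)}_k$ denotes the Fourier multiplier that restricts the $j$-th frequency variable to $[\sigma k,\sigma(k+1))$, these operators commute and $\mathcal F^{-1}(\widehat f\chi_{\fq})=P^{(1)}_{k_1}\cdots P^{(d)}_{k_d}f$.  Enlarging $\{\fq\}$ to the family of all dyadic cubes of sidelength $\sigma$ only increases the left-hand side, so it is enough to prove the bound with the sum taken over all $k\in\mathbb Z^d$.

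The one-dimensional input I would invoke is Rubio de Francia's theorem applied to the disjoint equal-length intervals $\{[\sigma k,\sigma(k+1))\}_{k\in\mathbb Z}$: for $2\le p<\infty$,
\[
\Big\|\Big(\sum_{k\in\mathbb Z}|P_k g|^2\Big)^{1/2}\Big\|_{L^p(\mathbb R)}\le C\,\|g\|_{L^p(\mathbb R)},
\]
with $C$ independent of $\sigma$ by dilation invariance.  Since $p\ge 2$, the pointwise embedding $\ell^p\hookrightarrow\ell^2$ gives
\[
\Big(\sum_k\|P_k g\|_{L^p(\mathbb R)}^p\Big)^{1/p}=\Big\|\Big(\sum_k|P_k g|^p\Big)^{1/p}\Big\|_{L^p(\mathbb R)}\le \Big\|\Big(\sum_k|P_k g|^2\Big)^{1/2}\Big\|_{L^p(\mathbb R)}\le C\,\|g\|_p.
\]

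I then iterate this bound one coordinate at a time.  Applying the one-dimensional inequality in the $x_1$-variable with $(x_2,\dots,x_d)$ as parameters and integrating by Fubini yields
\[
\sum_{k_1}\|P^{(1)}_{k_1}h\|_{L^p(\mathbb R^d)}^p\le C^p\,\|h\|_{L^p(\mathbb R^d)}^p
\]
for every $h\in L^p(\mathbb R^d)$.  Taking $h=P^{(2)}_{k_2}\cdots P^{(d)}_{k_d}f$, summing in $k_2$ and applying the analogous bound in the $x_2$-variable, and continuing through the remaining coordinates, yields
\[
\sum_{k\in\mathbb Z^d}\|P^{(1)}_{k_1}\cdots P^{(d)}_{k_d}f\|_p^p\le C^{dp}\|f\|_p^p,
\]
which is the claimed estimate.

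The only substantive ingredient is Rubio de Francia's theorem; the remainder is Fubini, commutativity of the one-variable projections, and the $\ell^p$–$\ell^2$ embedding valid precisely for $p\ge 2$.  For this reason I do not expect any serious obstacle: the restriction $p<\infty$ is needed because the one-variable inequality fails at $p=\infty$, while $p=2$ is trivially Plancherel.  The only mildly delicate point is to verify that the iteration closes cleanly with a constant independent of the collection $\{\fq\}$, which it does since each application of the one-dimensional inequality produces the same constant $C$.
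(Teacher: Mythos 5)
Your argument is correct and matches the route the paper indicates: the paper states the lemma without proof, pointing only to Rubio de Francia's one-dimensional inequality, and your reduction — apply it coordinate-by-coordinate with Fubini, then pass from $\ell^2$ to $\ell^p$ using $p\ge 2$ — is exactly how one fills in that gap. No issues.
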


\subsection{Multilinear estimates} In this subsection we consider various multilinear estimates which are basically consequences of multilinear restriction and Kakeya estimates  in \cite{becata}.

For $\psi\in \fge$ let us  set \[\Gamma=\Gamma(\psi)=\big\{(\zeta,
\psi(\zeta)): \zeta\in \frac12\,I^{d-1}\big\}.\] Let $2\le k\le d$,
and let $U_1, U_2, \dots, U_k$ be compact subsets of $I^{d-1}$. For
$i=1,\dots, k,$ and $\lambda>0$,  set
\[\Gamma_i=\big\{(\zeta, \psi(\zeta)):
\zeta\in U_i\big\}, \,  \Gamma_i(\lambda)=\Gamma_i+O(\lambda)\,.
\]
 For $\xi=(\zeta,
\psi(\zeta))\in \Gamma(\psi)$, let ${\mathrm N}(\xi)$ be the upward
unit normal vector at $(\zeta, \psi(\zeta))$.

For $v_1, \dots, v_k\in \mathbb R^d$,  denote by  $ V\!ol (v_1, \dots, v_k)$ the $k$-dimensional volume of the parallelepiped  given by  $ \{ s_1v_1+\dots+ s_k v_k :  s_i\in [0,1],\, 1\le i\le k\}.$   Transversality among the surfaces $\Gamma_1, \dots, \Gamma_k$ is important for  the multilinear estimates.  Degree of transversality is quantitatively stated as follows: 
\Be
\label{transverse} V\!ol({\mathrm N}(\xi_1), {\mathrm
N}(\xi_2),\dots, {\mathrm N}(\xi_k))\ge \sigma\Ee
for some $\sigma >0$ whenever $\xi_i\in \Gamma_i$, $i=1,\dots, k$. Since $ \psi\in \fge$,
$\nabla\psi$ is a diffeomorphism which is close to the identity map.
The condition \eqref{transverse} may be replaced by a simpler one that $V\!ol(\zeta_1,
\zeta_2, \dots,  \zeta_k|)\gtrsim \sigma$ whenever
$\zeta_i\in U_i$, $i=1,\dots, k$. The following is due to Bennett,
Carbery and Tao \cite{becata}.

\begin{thm}\label{multi-l2} Let $0<\delta\ll \sigma\ll 1$ and $\psi\in \fge$. Suppose that  $\Gamma_1, \dots,
\Gamma_k$ are given as in the above and \eqref{transverse} is
satisfied whenever $\xi_i\in \Gamma_i$, $i=1,\dots, k$, and suppose
that $\widehat F_i\subset \Gamma_i(\delta)$,  $i=1,\dots, k$. Then,
if $p\ge 2k/(k-1)$ and $\epsilon_\circ$ is sufficiently small, for
$\epsilon>0$  there are constants $N=N(\epsilon)$ such that, for $x\in
\mathbb R^d$,
\[\Big\| \prod_{i=1}^k F_i\Big\|_{L^\frac pk(B(x,\delta^{-1}))}\le
C\sigma^{-C_\epsilon}\delta^{-\epsilon} \prod_{i=1}^k
\delta^\frac12 \|F_i\|_{2}\] holds with $C, C_\epsilon$, independent
of $\psi$.
\end{thm}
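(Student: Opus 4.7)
The plan is to deduce this Fourier-side multiplier version of the multilinear estimate from the classical multilinear extension estimate of Bennett--Carbery--Tao via a slicing in the normal direction to the surface $\Gamma$, together with Plancherel and Cauchy--Schwarz. The bridging step is natural: the $\delta$-neighborhood $\Gamma_i(\delta)$ fibers as $\{(\zeta,\psi(\zeta)+s): \zeta \in U_i,\ |s|\lesssim \delta\}$, so functions Fourier-supported there are superpositions of extension operators on $\Gamma_i$ parametrized by the normal coordinate $s$.

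Concretely, I would first set $g_{i,s}(\zeta) := \widehat{F_i}(\zeta,\psi(\zeta)+s)\,\chi(\zeta)$ (with $\chi$ a smooth cutoff to a slight enlargement of $U_i$) and write, by Fourier inversion,
\[
F_i(x) = \int_{|s|\lesssim \delta} e^{2\pi i s x_d}\, \mathcal{E}_i g_{i,s}(x)\, ds, \qquad
\mathcal{E}_i g(x) := \int_{U_i} g(\zeta)\, e^{2\pi i (x'\cdot\zeta + x_d\psi(\zeta))}\, d\zeta.
\]
Taking products gives a $k$-fold integral representation of $\prod_i F_i$ over $(s_1,\ldots,s_k)\in [-C\delta,C\delta]^k$. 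To the integrand $\prod_i \mathcal{E}_i g_{i,s_i}$ I would then apply the Bennett--Carbery--Tao multilinear extension estimate \cite{becata} at scale $R=\delta^{-1}$: the transversality hypothesis \eqref{transverse} gives
\[
\Big\|\prod_{i=1}^k \mathcal{E}_i g_{i,s_i}\Big\|_{L^{p/k}(B(x,\delta^{-1}))}
\le C\,\sigma^{-C_\epsilon}\delta^{-\epsilon}\, \prod_{i=1}^k \|g_{i,s_i}\|_{L^2(U_i)}.
\]
To move this estimate back to $\prod_i F_i$ I would use Minkowski's integral inequality in the $s_i$ variables (valid when $p/k\ge 1$; when $p/k<1$ one instead uses the pointwise bound $|F_i(x)|^2\le C\delta\int|\mathcal{E}_i g_{i,s}(x)|^2\,ds$ from Cauchy--Schwarz together with the multilinear Kakeya-type form of BCT applied to the thickened wavepackets). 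Finally, Cauchy--Schwarz in each $s_i$ over a $\delta$-interval produces the factor $\delta^{k/2}$, and Plancherel on the fibration yields
\[
\int_{|s|\lesssim \delta} \|g_{i,s}\|_{L^2(U_i)}^2\, ds \,\asymp\, \|F_i\|_{L^2(\mathbb R^d)}^2,
\]
where the Jacobian is $O(1)$ because $\psi\in\mathfrak{G}(\epsilon_\circ,N)$; this delivers the desired $\delta^{1/2}\|F_i\|_2$ per factor.

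The principal difficulty is not the slicing reduction itself but verifying that the constants in the cited multilinear extension estimate may be taken \emph{uniform} over the class $\mathfrak{G}(\epsilon_\circ,N)$ and exhibit the advertised polynomial dependence $\sigma^{-C_\epsilon}$ on the transversality parameter. This forces one to revisit the BCT induction-on-scales argument and to check that it depends on $\psi$ only through finitely many $C^N$-derivatives (with $N=N(\epsilon)$ sufficiently large), and that the transversality enters BCT's multilinear Kakeya step through the arithmetic/geometric Loomis--Whitney-type inequality where its effect is indeed polynomial. Once these uniformity issues are handled, the slicing + Plancherel + Cauchy--Schwarz chain described above produces the stated inequality on any ball $B(x,\delta^{-1})$ by translation invariance of the argument.
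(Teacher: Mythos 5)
The paper gives no proof of this theorem. Immediately after the statement it remarks that, ``besides [a] stability issue,'' the estimate is essentially the multilinear restriction estimate of \cite{becata}, and it points to their Theorem~1.16 and Lemma~2.2 (for $k=d$) and their Section~5 (for $2\le k<d$); the $\sigma^{-C_\epsilon}$ dependence is then discussed in Remark~\ref{bound}. Your slicing reduction from the $\delta$-neighborhood formulation to the extension-operator form is precisely the device of the cited Lemma~2.2, your Plancherel/Cauchy--Schwarz bookkeeping producing the $\delta^{k/2}$ factor is correct, and your closing paragraph on uniformity over $\fge$ and on the polynomial $\sigma$-dependence is exactly the point the paper flags. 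So the proposal is aligned in spirit with the paper's intent and supplies detail the paper delegates to the citation.

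The genuine gap is the case $p/k<1$. At the critical exponent $p=2k/(k-1)$ one has $p/k=2/(k-1)$, which is $<1$ whenever $k\ge4$ --- precisely the high-multilinearity regime that matters here (the argument in Section~\ref{closinginduction} uses $k$ up to roughly $2d/3$). Your fallback for that range (pointwise Cauchy--Schwarz $|F_i|^2\le C\delta\int|\mathcal E_ig_{i,s}|^2\,ds$, then ``multilinear Kakeya applied to thickened wavepackets'') is a pointer rather than an argument: after the pointwise step the quantity to control is $\int_B\prod_i\big(\int|\mathcal E_ig_{i,s_i}|^2\,ds_i\big)^{1/(k-1)}$, a product of continuous square averages, and the scalar extension estimate does not apply to it off the shelf. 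The clean resolution --- and the one the paper actually carries out in full for the closely related Proposition~\ref{confined} --- is to run the induction-on-scales argument directly on the $\delta$-neighborhood formulation: there the multilinear Kakeya input furnishes the key step, and the $\delta^{1/2}$ per factor emerges from Plancherel orthogonality when passing from a scale-$\delta^{-1}$ ball to scale-$\delta^{-1/2}$ sub-balls, not from a separate slicing Cauchy--Schwarz. If you insist on the extension-operator detour you must spell out the equivalence argument (Lemma~2.2 of \cite{becata}), which is exactly the reference the paper is standing on.
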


Besides stability issue this estimate is essentially the same as  the multilinear restriction
estimate in \cite{becata}. (See \cite[Theorem 1.16]{becata} for the
case $k=d$ (also see Lemma 2.2) and  see \cite[Section 5]{becata}
for the case of lower linearity $2\le k<d$).  Though we are considering only  the
surfaces which are the graphs of $\psi\in \fge$,  but theorem remains
true for surfaces even with vanishing curvature as long as the
transversality condition is satisfied. Uniformity of the estimate
follows from the fact that the multilinear Kakeya and restriction
estimates are stable under perturbation of the associated surfaces.
The estimate is conjectured to be true without $\delta^{-\epsilon}$
loss (this is equivalent with the endpoint $k-$linear restriction
estimate) but it remains open when $k\ge 3$ even though the
corresponding endpoint case for the multilinear Kakeya estimate is
obtained by Guth \cite{gu}.

\begin{rem}\label{bound} The proof of  Theorem \ref{multi-l2} is based on the multilinear
Kakeya estimate and induction on scale argument which involves
iteration of  induction assumption to reduce the exponent of
$\delta^{-1}$. Such improvement of exponent is possible at the
expense of extra loss of bounds in terms of  $\sigma^{-c}$. By following the
argument in \cite{becata} one can easily see that one may take
$C_\epsilon\lesssim C\log\frac1\epsilon.$ (See the paragraph below \eqref{l2222}). Hence, the bound becomes less efficient
when $\sigma$ gets as small as $\delta^c$ for some
$c>0$.  In $\mathbb R^3$ the sharp bound depending on $\sigma$ was recently  obtained by Ramos \cite{jramos}.  However, the argument of Bourgain-Guth avoids such problem by
keeping Fourier supports of functions largely separated while   being decomposed. In contrast with the conventional approach in
which functions are usually decomposed into finer frequency pieces
this was achieved by decomposing the input functions into those of
relatively large frequency supports.
\end{rem}

\begin{lem} \label{kerneldelta1} Let $\varphi\in C_c^\infty(2I)$ and
$\eta\in C_c^\infty(I^d)$ which satisfies $1/2\le \eta\le 2$. Let
$0<\delta\ll \sigma\le 1$.  Set
\[K_\delta=\cF^{-1}\Big(\varphi\Big(\frac{\eta(\xi)(
\tau-\psi(\zeta))}{C\delta}\Big)\widetilde \chi(\xi)\Big),\] and
$\mathfrak K_M(x)=(1+\delta|x|)^{-M}$.
Suppose $\widetilde \chi$
is supported in a cube of sidelength $C\sigma$ and
$|\partial_\xi^\alpha\widetilde \chi|\lesssim \sigma^{-|\alpha|}$ for any $\alpha$.
Then, for any $M$, there is an $N=N(M)$  
such that \Be\label{kerneldelta} |K_\delta (x)|\le
C\delta\sigma^{d-1}\fK_M(x)
\Ee
with $C$ depending only on
$\|\psi\|_{C^N(I^{d-1})}$. 
\end{lem}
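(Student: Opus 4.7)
The strategy is classical non-stationary phase. The Fourier multiplier defining $K_\delta$ is supported in the intersection of the $O(\delta)$-slab $\{|\tau-\psi(\zeta)|\lesssim\delta\}$ with $\supp\widetilde\chi$, a set of measure $\lesssim\delta\sigma^{d-1}$; this already yields the trivial bound $|K_\delta(x)|\lesssim\delta\sigma^{d-1}$, which suffices for \eqref{kerneldelta} in the region $\delta|x|\le 1$. For $\delta|x|\ge 1$ the plan is to flatten the slab via the substitution $\tau=\psi(\zeta)+C\delta u$, yielding
\[K_\delta(x)=C\delta\iint e^{i\Phi(\zeta,u;x)}\widetilde{\varphi}(\zeta,u)\,d\zeta\,du,\qquad \Phi(\zeta,u;x)=x_\zeta\cdot\zeta+x_\tau\psi(\zeta)+C\delta x_\tau u,\]
with amplitude $\widetilde{\varphi}(\zeta,u)=\varphi(\eta(\zeta,\psi(\zeta)+C\delta u)u)\widetilde\chi(\zeta,\psi(\zeta)+C\delta u)$ supported in a set of diameter $\sim\sigma$ in $\zeta$ and bounded in $u$, satisfying $|\partial_\zeta^\alpha\partial_u^\beta\widetilde{\varphi}|\lesssim\sigma^{-|\alpha|}$ with constants controlled by $\|\psi\|_{C^{|\alpha|+\beta+1}}$ together with the fixed profiles $\eta$ and $\varphi$.

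I then split according to whether $|x_\tau|$ or $|x_\zeta|$ exceeds $|x|/\sqrt 2$. If $|x_\tau|\ge|x|/\sqrt 2$, integrating by parts $M$ times in $u$ via the operator $(iC\delta x_\tau)^{-1}\partial_u$ produces a factor $(\delta|x_\tau|)^{-M}\lesssim(\delta|x|)^{-M}$, so the bound $\delta\sigma^{d-1}(\delta|x|)^{-M}$ follows. If $|x_\zeta|\ge|x|/\sqrt 2$, I examine $\nabla_\zeta\Phi=x_\zeta+x_\tau\nabla\psi(\zeta)$, which varies by at most $O(\sigma|x_\tau|)$ over the $\sigma$-support since $|\nabla^2\psi|\lesssim 1$. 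When both $\sigma|x_\tau|\le|x_\zeta|/4$ and $|x_\zeta+x_\tau\nabla\psi(\zeta_0)|\ge|x_\zeta|/2$ hold, one has $|\nabla_\zeta\Phi|\gtrsim|x_\zeta|$ uniformly on the support, and $M$ integrations by parts in $\zeta$ (each costing $\sigma^{-1}$ on the amplitude and gaining $|x_\zeta|^{-1}$ from the phase) yield $\delta\sigma^{d-1}(\sigma|x_\zeta|)^{-M}\lesssim\delta\sigma^{d-1}(\delta|x|)^{-M}$, where the final inequality uses $\delta\le\sigma$ together with $|x_\zeta|\gtrsim|x|$. In the two complementary subcases one has either $\sigma|x_\tau|>|x_\zeta|/4$ (so $|x_\tau|\gtrsim|x|$ since $\sigma\le 1$), or $|x_\tau|\,|\nabla\psi(\zeta_0)|\ge|x_\zeta|/2$ (so $|x_\tau|\gtrsim|x_\zeta|\gtrsim|x|$ using $|\nabla\psi|\lesssim 1$), and both reduce to the $u$-integration by parts handled first.

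The main technical point to monitor is the uniformity of constants in $\psi$: each $\zeta$-integration by parts differentiates the symbol $\nabla_\zeta\Phi/|\nabla_\zeta\Phi|^2$, producing derivatives of $\psi$ up to order $M+1$, so the required smoothness is $N=N(M)$, linear in $M$. The sole place where the hypothesis $\delta\ll\sigma$ is essential is in the $\zeta$-integration by parts, where it upgrades $(\sigma|x_\zeta|)^{-M}$ to the target rate $(\delta|x|)^{-M}$; the $u$-integration by parts already delivers the correct rate. Combining the trivial bound for $\delta|x|\le 1$ with the IBP estimates for $\delta|x|\ge 1$ yields \eqref{kerneldelta}.
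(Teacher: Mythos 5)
Your proof is correct and takes essentially the same route as the paper: both flatten the $\delta$-slab by the substitution $\tau = \psi(\zeta) + O(\delta)u$, split according to whether the $\tau$- or $\zeta$-component of $x$ dominates, and integrate by parts in the corresponding variable to gain $(\delta|x|)^{-M}$, with $\delta\ll\sigma$ entering only to upgrade the $\zeta$-IBP rate. Your case analysis is somewhat more explicit — in particular you show that whenever the flattened phase gradient $x_\zeta + x_\tau\nabla\psi$ fails to be $\gtrsim|x_\zeta|$, necessarily $|x_\tau|\gtrsim|x|$ so the $u$-IBP takes over — which the paper handles implicitly via the fixed threshold $|x'|/100\ge|x_d|$.
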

\begin{proof} Changing variables $\tau\to \delta\tau+\psi(\zeta)$,
we write
\[
K_{\delta}(x)=(2\pi)^{-d}\delta\int e^{i\delta\tau x_d}\int
e^{i(x'\cdot\zeta+ x_d\psi(\zeta))}\widetilde \varphi(\xi)
 d\zeta d\tau,\]
 where
 \[\widetilde \varphi(\xi)=\varphi\Big(\frac{\eta(\zeta,\delta\tau+\psi(\zeta)){\tau}}{C}\Big)
 \widetilde \chi(\zeta, \delta\tau+\psi(\zeta)).\]
We note that $|\partial_\zeta^\alpha \widetilde \varphi|\lesssim
\sigma^{-|\alpha|}(\|\psi\|_{C^{|\alpha|}}+\|\eta\|_{C^{|\alpha|}})$.
Then, if $|x'|/100\ge |x_d|$,  by integration by parts it follows that
\[\Big|\int
e^{i(x'\cdot\zeta+ x_d\psi(\zeta))} \widetilde \phi(\xi) d\zeta\Big|
\le C\sigma^{d-1}(\|\psi\|_{C^{M}(I^{d-1})}
+\|\eta\|_{C^{M}(I^{d})}) (1+\sigma|x'|)^{-M}\,.\]  Note that $
\widetilde \phi(\xi)=0 $ if $|\tau|\ge 5C$ since $1/2\le \eta\le 1$.
This gives the desired inequality \eqref{kerneldelta} by taking
integration in $\tau$ since $\delta\ll\sigma$. On the other hand, if
$|x'|/100< |x_d|$, we integrate in $\tau$ first. Since
$|\partial_\tau^l \widetilde \varphi|\lesssim
(\|\psi\|_{C^{l}}+\|\eta\|_{C^{l}})$, by integration by parts again
we have $|\int e^{i\delta\tau x_d} \widetilde \phi(\xi) d\tau|\le
C(\|\psi\|_{C^M(I^{d-1})}+ \|\eta\|_{C^{M}(I^{d})}) (1+|\delta
x_d|)^{-M}$. This and taking integration in $\zeta$ yield
\eqref{kerneldelta}.
\end{proof}

From Theorem \ref{multi-l2} and Lemma \ref{kerneldelta1} we can
obtain the sharp multilinear $L^p$ estimate for $T_\delta$ under transversality
condition without localizing the multilinear operator on a ball of radius $1/\delta$. In fact, since  $T_\delta f=K_\delta\ast   f$ and  the kernel $K_\delta$ (from Lemma \ref{kerneldelta1}) is rapidly decaying outside of $B(0,C/\delta)$,  one may handle $f$ as if $f$ were
supported in a ball $B$ of radius $\delta^{-1-\eps}$. This type of
localization and H\"older's inequality make it possible to lift $L^2$ estimate to
that of $L^p$, $p\ge 2$, with sharp bound. Such idea of deducing $L^p$ estimates from $L^2$
ones goes back to Stein \cite[p. 442-443]{st2} (\cite{fe1, fef2}),
and in \cite{lee1, lrs} the similar idea was used to make use of $L^2$
bilinear restriction estimate. The same argument also works with the
multilinear estimates with a little modification. We make it precise
in what follows.

\begin{prop}\label{multilp}
Let $0<\delta\ll \sigma\ll \widetilde \sigma\ll  1$ and $\psi\in
\fge$, and\, let $Q_1,\dots, Q_k\in \frac12 I^d$ be dyadic cubes of
sidelength $\widetilde \sigma$. Suppose that  \eqref{transverse} is
satisfied whenever $\xi_i\in \Gamma\cap Q_i$, $i=1,\dots, k$, and
$\supp\, \widehat f_i\subset Q_i$,  $i=1,\dots, k$. Then, if $p\ge
2k/(k-1)$ and $\epsilon_\circ$ is small enough,  for $\epsilon>0$
there is an $N=N(\epsilon)$ such that
\Be\label{klinearlp}\Big\| \prod_{i=1}^k T_\delta f_i\Big\|_{L^\frac
pk(\mathbb R^d)}\le
C\sigma^{-C_\epsilon}\delta^{-\epsilon} \prod_{i=1}^k
\delta^{\frac dp-\frac{d-1}2} \|f_i\|_{p}\Ee holds with $C,
C_\epsilon$, independent of $\psi$.
\end{prop}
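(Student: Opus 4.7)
The plan is to deduce the multilinear $L^p$ bound from the $L^2$-based multilinear estimate (Theorem \ref{multi-l2}) by the classical Stein argument that lifts $L^2$ estimates to $L^p$ ones via spatial localization at the reciprocal scale $\delta^{-1}$. Set $F_i = T_\delta f_i$, so that $\widehat{F_i}$ is supported in $Q_i \cap \Gamma_i(C\delta)$, with $\Gamma_i = \Gamma\cap Q_i$, and the transversality condition \eqref{transverse} holds on the $\Gamma_i$. First I would cover $\mathbb R^d$ by a boundedly overlapping grid of balls $B$ of radius $\delta^{-1}$ and fix for each $B$ a Schwartz function $\eta_B$ with $\eta_B\gtrsim 1$ on $B$ and $\widehat{\eta_B}$ supported in $B(0,\delta)$. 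Since $\widehat{\eta_B F_i}$ is still contained in $\Gamma_i(C'\delta)$, Theorem \ref{multi-l2} applies on $B(x_B,\delta^{-1})$ and gives
\[
\Big\|\prod_i F_i\Big\|_{L^{p/k}(B)} \lesssim \sigma^{-C_\epsilon}\delta^{-\epsilon}\prod_i \delta^{1/2}\|\eta_B F_i\|_{L^2(\mathbb R^d)}.
\]

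Next I would convert the global $L^2$ norm on the right into a local $L^p$ norm of $f_i$. Using the rapid decay $|K_\delta(x)|\lesssim \widetilde\sigma^{d-1}\delta(1+\delta|x|)^{-M}$ from Lemma \ref{kerneldelta1}, the contribution to $K_\delta\ast f_i$ coming from $f_i$ restricted to the complement of $\widetilde B := B(x_B,\delta^{-1-\epsilon})$ is negligible, smaller than any fixed power of $\delta$. For the near piece, Plancherel yields $\|\eta_B(K_\delta\ast(f_i\chi_{\widetilde B}))\|_2 \le \|f_i\chi_{\widetilde B}\|_2$, and since $p\ge 2k/(k-1)\ge 2$ H\"older on $\widetilde B$ gives
\[
\|f_i\chi_{\widetilde B}\|_2 \lesssim |\widetilde B|^{1/2-1/p}\|f_i\|_{L^p(\widetilde B)} \lesssim \delta^{-\epsilon d}\,\delta^{-d(1/2-1/p)}\|f_i\|_{L^p(\widetilde B)}.
\]
The arithmetic identity $\delta^{1/2}\cdot\delta^{-d(1/2-1/p)} = \delta^{d/p-(d-1)/2}$ recovers exactly the exponent in \eqref{klinearlp}.

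Finally I would sum over the balls $B$. Since $p/k$ may be less than $1$ (as when $k=d$ with $d$ large), the summation is carried out at the level of $L^{p/k}$ as an $\ell^{p/k}$-norm of the local pieces,
\[
\Big\|\prod_i F_i\Big\|_{L^{p/k}(\mathbb R^d)}^{p/k} \lesssim \sum_B \Big\|\prod_i F_i\Big\|_{L^{p/k}(B)}^{p/k},
\]
followed by H\"older in $B$ with $k$ equal factors of exponent $k$ to recover the product structure, using the bounded overlap of $\{\widetilde B\}$ (paid for by a $\delta^{-\epsilon d}$ that is absorbed into $\delta^{-\epsilon}$). The anticipated main obstacle is the bookkeeping in the localization step: one must simultaneously ensure that multiplying $F_i$ by $\eta_B$ enlarges the Fourier support only within $\Gamma_i(C'\delta)$ so that Theorem \ref{multi-l2} still applies with the same transversality constant $\sigma$ and the same $\sigma^{-C_\epsilon}$ dependence, that the uniform constants do not degrade as $\psi$ varies in $\fge$, and that the Schwartz tails of $\eta_B$ and of $K_\delta$ are absorbed without upsetting the sharp exponent $d/p-(d-1)/2$; after rescaling $\epsilon$ slightly, all such $\delta^{-\epsilon d}$ losses disappear into the final $\delta^{-\epsilon}$.
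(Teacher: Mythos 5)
Your proposal is correct and follows essentially the same route as the paper's proof: cover $\mathbb R^d$ by boundedly overlapping balls of radius $\delta^{-1}$, apply Theorem~\ref{multi-l2} locally, split $f_i$ into a near part supported in the slightly enlarged ball $\widetilde B=B(x_B,\delta^{-1-\epsilon})$ and a far part killed by the kernel decay of Lemma~\ref{kerneldelta1}, convert $L^2$ to $L^p$ on $\widetilde B$ by H\"older, and sum over balls at the $\ell^{p/k}$ level with a second H\"older to restore the product structure. The only material difference is organizational: you apply the $L^2$ multilinear estimate to $\eta_B T_\delta f_i$ and only afterwards split $f_i$, which forces you to control $\|\eta_B K_\delta\ast(f_i\chi_{\widetilde B^c})\|_{L^2(\mathbb R^d)}$ using both the tails of $\eta_B$ and of $K_\delta$; the paper instead splits $f_i$ first, applies Theorem~\ref{multi-l2} only to the near products, and handles all products containing a far factor directly by the pointwise kernel bound on the ball $\cB$, which avoids the $\eta_B$ weight entirely and makes the error term cleaner. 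Both routes close; you have correctly identified the exponent arithmetic and the need to absorb the $\delta^{-c\epsilon}$ overlap losses into the final $\delta^{-\epsilon}$.
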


\begin{proof} Set $\widetilde Q_i=\{\xi:
\dist(\xi, Q_i)\le \widetilde c\sigma\}$, and let $\widetilde
\chi_i$ be a smooth function supported in $\widetilde Q_i$  which
satisfies $\widetilde \chi_i=1$ on $Q_i$ and
$|\partial_\xi^\alpha\widetilde \chi_i|\lesssim \sigma^{-|\alpha|}$.
Let us define $K_i$ by
\begin{align*} \mathcal F(K_i)(\xi)=\phi\big(\frac{
\tau-\psi(\zeta)}{\delta}\big)\widetilde \chi_i(\xi). \end{align*}
Since $\widehat f_i$ is supported in $Q_i$, we have $\tdel f_i= K_i
\ast f_i.$

Let $\{\cB\}$ be the collection of boundedly overlapping balls of
radius $\delta^{-1}$ which cover $\mathbb R^d$. For $\eps>0$ we
denote by $\widetilde \cB$ the balls $B(a,\delta^{-1-\eps})$ if
$\cB=B(a,\delta^{-1})$. By decomposing $f_i=\chi_{\widetilde\cB} f_i
+ \chi_{\widetilde\cB^c} f_i$, we bound the $p/k$-th power of the
left hand side of \eqref{klinearlp} by
\begin{align*}
\sum_{\cB}\int_{\cB} \prod_{i=1}^k &|\tdel f_i(x)|^\frac pk
dx=\sum_{\cB}\int_{\cB} \prod_{i=1}^k |K_i\ast f_i(x)|^\frac pk
dx\lesssim I + I\!I,
\end{align*}
where
\begin{align*}
 I= \sum_{\cB}\int_{\cB} \prod_{i=1}^k
|K_i\ast(\chi_{\widetilde \cB} f_i)(x)|^\frac pk\,,
\,\,
I\!I= \sum_{\cB}\,\,\,\Big( \sum_{g_i= \chi_{\widetilde \cB^c} f_i \text{ for some } i  } \int_{\cB}
\prod_{i=1}^k
 |K_\delta\ast g_i(x)|^\frac pk
dx\Big).
\end{align*}
The second sum in $I\!I$  is summation  over  all possible choices  of   $g_i$ with  $g_i= \chi_{\widetilde
\cB} f_i \text{ or } \chi_{\widetilde \cB^c} f_i$, and $g_i= \chi_{\widetilde \cB^c} f_i \text{ for some } i$. So, in the product $\prod_{i=1}^k
 K_\delta\ast g_i(x)$ there is at least one $g_i$  which satisfies  $g_i= \chi_{\widetilde \cB^c} f_i$.

Since $\mathcal F(K_i \ast(\chi_{\widetilde \cB} f_i))\subset
\Gamma(\delta)\cap \widetilde Q_i$, taking a sufficiently small
$\widetilde c>0$, from continuity it is easy to see that $F_1=K_1
\ast(\chi_{\widetilde \cB} f_1), \dots, F_k=K_k
\ast(\chi_{\widetilde \cB} f_k)$ satisfy the assumption of
Theorem \ref{multi-l2}. So, by Theorem \ref{multi-l2}
and Plancherel's theorem we see
\begin{align*}
I&\lesssim \sigma^{-C_\eps}\big(\frac1\delta\big)^{\eps} \sum_{\cB}
\prod_{i=1}^k \delta^\frac{p}{2k} \big\|K_i \ast(\chi_{\widetilde
\cB} f_i)\big\|_2^\frac pk \le
\sigma^{-C_\eps}\big(\frac1\delta\big)^{\eps} \sum_{\cB}
\prod_{i=1}^k \delta^\frac{p}{2k} \big\|\chi_{\widetilde \cB}
f_i\big\|_2^\frac pk
\end{align*} for $\psi\in \fge$ and $\epsilon_\circ$ small enough.  Since $p> 2$,
by applying H\"older's inequality twice  we have
\begin{align*}
I &\lesssim
\sigma^{-C_\eps}\big(\frac1\delta\big)^{\eps}\prod_{i=1}^k
\delta^{\frac pk(\frac12+d(1+\eps)(\frac1p-\frac12))}
\big(\sum_{\cB} \big\|
\chi_{\widetilde \cB} f_i\big\|_p^p\big)^\frac 1{k} \lesssim \sigma^{-C_\eps}\big(\frac1\delta\big)^{c\eps}
\Big(\prod_{i=1}^k \delta^{\frac dp-\frac{d-1}2 }\big\|
f_i\big\|_p\Big)^\frac pk.
\end{align*}

  For $I\!I$, we use
Lemma \ref{kerneldelta1}. There is a constant
$C=C(\|\psi\|_{C^N(I^{d-1})})$ such that $|K_i\ast(\chi_{\widetilde
\cB^c} f_i)(x) |\le C\delta\delta^{\eps (M-d-1)} \fK_{d+1}\ast
|f_i|(x)$ if $x\in B$, and  $|K_i\ast g_i(x)|\le C\delta
\fK_{d+1}\ast |f_i|(x)$. Thus, we get
\begin{align*}
I\!I &\lesssim \delta^{\frac{(k-1)p}{k}}\delta^{\,\eps (N-d-1)\frac
pk}
 \int \prod_{i=1}^k\big(\fK_{d+1}\ast|f_i|(x)\big)^\frac pk dx \lesssim
 \delta^{c_2 N\eps-c_1} \prod_{i=1}^k \|
f_i\|_{p}^\frac pk.
\end{align*}
for some $c_1,$ $c_2>0$ because $\|\fK_{d+1}\ast f\|_p\le
C\delta^{-d}\|f\|_p$ for $1\le p\le\infty$ by Young's convolution
inequality. Combining two estimates for $I$ and $II$ with  $N$ large
enough,  we see that for $\eps>0$ there is an $N$ such that
\[ \Big\|
\prod_{i=1}^k T_\delta f_i\Big\|_{L^\frac pk(\mathbb R^d)}\le
C\sigma^{-C_\eps}\big(\frac1\delta\big)^{c\eps} \prod_{i=1}^k
\delta^{\frac dp-\frac{d-1}2} \|f_i\|_{p}\] for $\psi\in \fge$ and $\epsilon_\circ$ is small enough. Therefore,
choosing $\eps=\epsilon/c$, we get  the desired bound \eqref{klinearlp}.
\end{proof}

In what follows we show that if the normal vectors of the surfaces are confined in
$C\delta$-neighborhood of a $k$-plane in Proposition \eqref{confined}, then the associated multilinear restriction estimate has improved bound. In particular, if one takes $p=\frac{2k}{k-1}$, the bound in \eqref{l2} is $\sim\delta^{-\epsilon}\delta^{\frac d2}$, which is better than the corresponding bound $\sim \delta^{-\epsilon}\delta^{\frac k2}$ in
Proposition \ref{multilp}.  However, it seems difficult
to cooperate on such improvement  to get a better linear bound without using the square sum function (see Proposition \ref{cor-confined} below).

\begin{prop}\label{confined} Let $0<\delta\ll \sigma\ll 1$,
$\psi\in \fge$, and $\Pi$ be a $k$-plane containing the origin.
Suppose that $\Gamma(\psi)$, $\Gamma_1, \dots, \Gamma_k$ are given
as in the above and \eqref{transverse} is satisfied whenever
$\xi_i\in \Gamma_i$, $i=1,\dots, k$. Suppose that
\Be\label{frequency}\supp\,\widehat F_i\subset \Gamma_i(\delta)\cap
{\mathrm N}^{-1}(\Pi+O(\delta)), \, i=1,\dots, k. \Ee Then, if $2\le
p\le 2k/(k-1)$ and $\epsilon_\circ$ is sufficiently small, for
$\epsilon>0$ there is an $N=N(\epsilon)$ such that
\begin{equation}\label{l2}
\Big\| \prod_{i=1}^k F_i\Big\|_{L^\frac{p}{k}(B(x,\delta^{-1}))}\le
C\sigma^{-C_\epsilon}
\delta^{-\epsilon}\delta^{dk(\frac12-\frac{1}{p})} \prod_{i=1}^k
\|F_i\|_{2}
\end{equation}
holds with $C,$ $C_\epsilon$, independent of $\psi$.
\end{prop}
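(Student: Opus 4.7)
The plan is to derive \eqref{l2} by complex multilinear interpolation between the two endpoints $p=2$ and $p=2k/(k-1)$. At $p=2$, the generalized H\"older inequality with $k$ factors of $L^2$ gives $\|\prod_{i=1}^k F_i\|_{L^{2/k}(\mathbb R^d)}\le \prod_i\|F_i\|_{L^2}$, which matches \eqref{l2} since $\delta^{dk(1/2-1/2)}=1$ and requires no support information. All the real work is at the other endpoint $p=2k/(k-1)$, where one must gain an extra factor $\delta^{(d-k)/2}$ beyond the bound $\delta^{k/2}$ supplied by Theorem \ref{multi-l2}; this gain must come from the extra support hypothesis \eqref{frequency}.

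For the endpoint $p=2k/(k-1)$: since the $k$ transverse normals $N(\xi_i)$ all lie within $O(\delta)$ of $\Pi$ and span a $k$-dimensional subspace, $\Pi$ is forced to be close to the ``upward'' direction (the normal at the origin of $\Gamma$). After a spatial rotation we may therefore assume $\Pi^\perp$ is spanned by the first $d-k$ spatial basis vectors; accordingly split $\mathbb R^d=\mathbb R^{d-k}_u\times\mathbb R^k_v$ with dual coordinates $\xi=(\eta,\mu)$. Because $\psi\in\fge$ is close to $|\zeta|^2/2$, the condition $N(\xi)\in\Pi+O(\delta)$ forces the first $d-k$ components of $\zeta$ to be $O(\delta)$, so $\supp\widehat F_i\subset\{|\eta|\lesssim\delta\}\times S_i(\delta)$, where $S_i\subset\mathbb R^k$ is a $(k-1)$-dim elliptic surface (a small perturbation of the $(k-1)$-paraboloid). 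For each fixed $u$, set $g_i^u(v):=F_i(u,v)$; its partial Fourier transform in $v$ is supported in $S_i(\delta)$, and the projections $\pi_\Pi N(\xi_i)$ still satisfy \eqref{transverse} with constant comparable to $\sigma$. Theorem \ref{multi-l2} applied in dimension $k$ with linearity $k$ at the endpoint $p=2k/(k-1)$ therefore gives
\[\int_{B_v(x'',\delta^{-1})}\prod_{i=1}^k|g_i^u(v)|^{\frac{2}{k-1}}\,dv\lesssim\sigma^{-C_\epsilon}\delta^{-\epsilon}\delta^{\frac{k}{k-1}}\prod_{i=1}^k\|g_i^u\|_{L^2_v}^{\frac{2}{k-1}}.\]
Integrating over $u\in B_u(x',\delta^{-1})$ and applying H\"older in $u$ (with $k$ factors of exponent $k$) controls the right side by $\prod_i\|F_i\|_{L^{2k/(k-1)}_u L^2_v}^{2/(k-1)}$. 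Minkowski's integral inequality (since $2k/(k-1)\ge 2$) together with Bernstein's inequality in $u$ (using that the $\eta$-projection of $\supp\widehat F_i$ has measure $O(\delta^{d-k})$) yields $\|F_i\|_{L^{2k/(k-1)}_u L^2_v}\lesssim\delta^{(d-k)/(2k)}\|F_i\|_{L^2}$. Multiplying the $\delta$-exponents produces $\delta^{k/(k-1)+(d-k)/(k-1)}=\delta^{d/(k-1)}$, and taking the $(k-1)/2$-th root recovers exactly the desired $\delta^{d/2}$.

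Complex multilinear interpolation between the two endpoints then gives \eqref{l2} for every $2\le p\le 2k/(k-1)$ with the stated exponent $\delta^{dk(1/2-1/p)}$ (a direct computation with $1/p=(1-\theta)/2+\theta(k-1)/(2k)$ produces $\theta\,d/2=dk(1/2-1/p)$), and the prefactors $\sigma^{-C_\epsilon},\delta^{-\epsilon}$ persist. The main obstacle is verifying uniformly (in the slicing variable $u$) that the surfaces $S_i$ satisfy the hypotheses of the $k$-dimensional multilinear restriction theorem: ellipticity holds because $\psi(\zeta',\cdot)\in\fG(c\epsilon_\circ,N)$ whenever $|\zeta'|\lesssim\delta$, and the transversality $\sigma$ survives because $\pi_\Pi N(\xi_i)$ differs from $N(\xi_i)$ by only $O(\delta)$. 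The approximate (rather than exact) nature of the $\eta$-confinement is absorbed by standard smooth cutoffs in the partial Fourier inversion, contributing only rapidly decaying Schwartz tails.
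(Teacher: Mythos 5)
The central step of your argument is the factorization
$\supp\widehat F_i\subset\{|\eta|\lesssim\delta\}\times S_i(\delta)$,
which you need so that Bernstein in the $u$-variable gains the factor $\delta^{(d-k)(1/2-1/q)}$. This holds for the exact paraboloid $\psi=\psi_\circ$ (with $\Pi$ axis-aligned), but not for a general $\psi\in\fge$, and the failure is quantitative. The hypothesis $\mathrm N(\xi)\in\Pi+O(\delta)$ constrains the \emph{gradient} $\nabla\psi(\zeta)$ modulo $O(\delta)$ in the $\Pi^\perp$-directions, not the frequency coordinate $\eta$ itself. Writing $\psi=\psi_\circ+\rho$ with $\|\rho\|_{C^N}\le\epsilon_\circ$, the set $\mathrm N^{-1}(\Pi)\cap\Gamma$ is parametrized as $\eta=\eta(\mathbf y)$ with $\eta(\mathbf y)=\mathbf a-\nabla_\eta\rho(\mathbf y,\eta(\mathbf y))$, hence $\|\partial_{\mathbf y}\eta\|=O(\epsilon_\circ)$; over a base of diameter $O(1)$ this sweeps out a piece of $\mathbb R^{d-k}$ of diameter $O(\epsilon_\circ)$, \emph{not} $O(\delta)$. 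Consequently the $\eta$-projection of $\supp\widehat F_i$ (which is what Bernstein actually sees after Minkowski) has Lebesgue measure $\gtrsim\epsilon_\circ^{k-1}\delta^{d-2k+1}\gg\delta^{d-k}$ (for $\delta\ll\epsilon_\circ$), and the Bernstein step produces a polynomially worse bound, losing roughly $(\epsilon_\circ/\delta)^{(k-1)/(2k)}$. Since $\epsilon_\circ$ is a fixed constant independent of $\delta$, this loss cannot be absorbed into the $\delta^{-\epsilon}$ error. Decomposing into $\delta$-scale $\eta$-pieces and applying Bernstein termwise, or inserting a single shear, does not repair this: either one pays for the number of pieces in $\ell^1$, or the residual deviation of $\eta(\mathbf y)$ is still $O(\epsilon_\circ)$ at unit scales. (There is also a secondary issue: the rotation bringing $\Pi^\perp$ to the first $d-k$ coordinate axes is in general not a $\zeta$-rotation, so afterward $\Gamma$ need not remain a graph of a function in $\fge$; but this is comparatively minor and could be worked around.)

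The paper circumvents exactly this obstruction by not using a fixed linear coordinate splitting at all: it covers $\mathrm N^{-1}(\Pi)+O(\delta)$ by a boundedly overlapping family of parallelepipeds $\mathbf p(\xi_\alpha)$ adapted to the local frame at each $\xi_\alpha\in\mathrm N^{-1}(\Pi)$ (long in the $T_{\xi_\alpha}(\mathrm N^{-1}(\Pi))$-directions and in $\mathrm N(\xi_\alpha)$, short otherwise), associates to each $\sqrt\delta$-Fourier cube $\mbq$ the corresponding dual plank $\mathbf P_{i,\mbq}$, and then reduces the $(k-1)$-linear $L^{2/(k-1)}$ estimate to the multilinear Kakeya estimate restricted to $\Pi$-slices, closing the argument by an induction on scale. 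That wave-packet/Kakeya machinery is what absorbs the curvature of $\mathrm N^{-1}(\Pi)$, and your proposal does not have a substitute for it.
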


If  $p/k$ were bigger than equal to $\ge 1$, the inequality could be shown by using H\"older's inequality and $k$ linear multilinear restriction estimate in \cite{becata}.  However, this is not true in general and we  prove Proposition \ref{confined} by making use of the induction on scale argument and multilinear Kakeya estimate.   
The following is a consequence of Proposition \ref{confined}.
 
\begin{cor} \label{cor-confined} Suppose that the same assumptions in Proposition \ref{confined} hold.
Let $\{\fq\}$, $\fq\subset \frac12 I^d$,  be the collection of  dyadic cubes of side length
$\ell$,  $2^{-2}\delta< \ell \le 2^{-1}\delta$. Then, if $2\le p\le
2k/(k-1)$,  for $\epsilon>0$ there is an $N=N(\epsilon)$ such that,
for $x\in \mathbb R^d$,
\begin{equation}\label{squarefunt}
\Big\| \prod_{i=1}^k F_i\Big\|_{L^{\frac pk}(B(x,\delta^{-1}))}\le
C\sigma^{-C_\epsilon} \delta^{-\epsilon} \prod_{i=1}^k
\Big\|\Big(\sum_{\fq} | F_{i\,\fq}|^2\Big)^\frac12
\rho_{B(x,\delta^{-1})}\Big\|_{p}
\end{equation}
holds with $C$, $C_\epsilon$, independent of $\psi\in \fge$.
\end{cor}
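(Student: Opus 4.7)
The plan is to deduce Corollary \ref{cor-confined} from Proposition \ref{confined} through a three-step passage: spatial localization by the weight $\rho=\rho_{B(x,\delta^{-1})}$, quasi-orthogonality of the dyadic pieces, and a locally-constant $L^2\to L^p$ comparison. The guiding idea is that $\rho F_i$ still meets the Fourier-support hypothesis of Proposition \ref{confined} (because $\widehat\rho$ is supported in a cube of side $O(\delta)$), and $\sum_\fq|F_{i\fq}|^2$ is essentially constant on balls of radius $\delta^{-1}$ so that a weighted $L^2$-$L^p$ comparison is available on such scales.

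First, since $\rho\ge 1$ on $B(x,\delta^{-1})$, I would bound the left side by $\|\prod_i\rho F_i\|_{L^{p/k}(B(x,\delta^{-1}))}$. The Fourier transform of $\rho F_i$ is supported in $\Gamma_i(C\delta)\cap \mathrm N^{-1}(\Pi+O(\delta))$, so Proposition \ref{confined} (applied with $\delta$ replaced by $C\delta$, which does not change the form of the estimate) gives
\[
\Bigl\|\prod_{i=1}^k \rho F_i\Bigr\|_{L^{p/k}(B(x,\delta^{-1}))}\le C\sigma^{-C_\eps}\delta^{-\eps}\delta^{dk(\frac12-\frac1p)}\prod_{i=1}^k\|\rho F_i\|_2.
\]
Next, writing $F_i=\sum_\fq F_{i\fq}$, the functions $\rho F_{i\fq}$ have Fourier supports in $\fq+O(\delta)$, a family with bounded overlap. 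Plancherel's theorem together with this quasi-orthogonality then yields
\[
\|\rho F_i\|_2^2\lesssim \sum_\fq\|\rho F_{i\fq}\|_2^2=\int\rho^2\sum_\fq|F_{i\fq}|^2=\Bigl\|\Bigl(\sum_\fq|F_{i\fq}|^2\Bigr)^{1/2}\rho\Bigr\|_2^2.
\]

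The decisive step is to pass from the weighted $L^2$ norm of $G_i=(\sum_\fq|F_{i\fq}|^2)^{1/2}$ to its weighted $L^p$ norm. Since $|F_{i\fq}|^2$ has Fourier support in $\fq-\fq$, a cube of side $O(\delta)$, the standard reproducing-kernel argument gives $|F_{i\fq}|^2\lesssim |F_{i\fq}|^2*\widetilde\varphi$ for a nonnegative Schwartz bump $\widetilde\varphi$ of $L^1$-norm $\sim 1$ concentrated on $B(0,\delta^{-1})$; summing over $\fq$ we obtain $G_i^2\lesssim G_i^2*\widetilde\varphi$, which is the precise statement that $G_i^2$ is essentially constant on balls of radius $\delta^{-1}$. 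Combined with the fact that $\rho$ is effectively concentrated on $B(x,\delta^{-1})$ (of volume $\delta^{-d}$) with Schwartz-type decay, the locally-constant argument produces
\[
\|G_i\rho\|_2\lesssim \delta^{-d(\frac12-\frac1p)}\|G_i\rho\|_p.
\]
Chaining these three estimates gives
\[
\Bigl\|\prod F_i\Bigr\|_{L^{p/k}(B(x,\delta^{-1}))}\lesssim \sigma^{-C_\eps}\delta^{-\eps}\delta^{dk(\frac12-\frac1p)}\prod_i\delta^{-d(\frac12-\frac1p)}\|G_i\rho\|_p =\sigma^{-C_\eps}\delta^{-\eps}\prod_i\|G_i\rho\|_p,
\]
which is the desired inequality.

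The main obstacle is the local-constancy comparison in the third step: $G_i$ is not itself the inverse Fourier transform of a function on a single cube, so the $L^2$-$L^p$ reduction must be extracted from the local-constancy of $G_i^2$ rather than from a direct Bernstein inequality, and the Schwartz tails of $\rho$ must be absorbed without producing any power loss larger than $\delta^{-\eps}$. This is the only place where the exponent $N$ in the statement enters, as it determines how fast $\rho$ decays and hence how efficiently the tails can be summed.
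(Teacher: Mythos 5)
Your proposal is correct and follows essentially the same route as the paper's (omitted) proof, which the paper indicates parallels the proof of Corollary \ref{sq}: insert the weight $\rho_{B(x,\delta^{-1})}$, apply Proposition \ref{confined} to $\rho F_i$, use bounded overlap of the $\widehat{\rho F_{i\fq}}$ with Plancherel, and close with an $L^2$-to-$L^p$ comparison at scale $\delta^{-1}$. The only minor difference is that you carry out this last step via the local-constancy of $\sum_\fq|F_{i\fq}|^2$ rather than the paper's appeal to H\"older's inequality; the two are morally equivalent, and both implicitly require the flexibility to use a faster-decaying weight on one side, which is standard.
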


This may be compared with a discrete  formulation of multilinear
inequality in \cite{bogu} (see (1.1), p. 1250).  The inequality \eqref{squarefunt}   can be easily deduced from Proposition \ref{confined} by the standard argument using 
Plancherel's theorem and orthogonality ({\it cf.} Proof of Corollary
\ref{sq}). So, we omit the proof.

\begin{proof}[Proof of Proposition \ref{confined}] For $p=2$ the estimate \eqref{l2} follows from
H\"older's inequality and Plancherel's theorem. Hence, in view of
interpolation, it is enough to show \eqref{l2} for $p=2k/(k-1)$.

We prove \eqref{l2} by adapting the proof of multilinear restriction
estimate in \cite{becata}. By translation we may assume $x=0$. We
make the following assumption that, for $0<\delta\ll \sigma$ and
some $\alpha>0$,
 \begin{equation}\label{l22}
\Big\| \prod_{i=1}^k
F_i\Big\|_{L^\frac{2}{k-1}(B(0,\delta^{-1}))}\lesssim
\delta^{-\alpha} \delta^{\frac d2}\prod_{i=1}^k \|F_i\|_{2}
\end{equation}
holds uniformly for $\psi\in \fge$ whenever \eqref{frequency} holds
and \eqref{transverse} is satisfied for $\xi_i\in \Gamma_i$
$i=1,\dots,k$. It is clearly  true with a large $\alpha>0$ as can be
seen by making use of Lemma \ref{kerneldelta1}. We show \eqref{l22}
implies that, for $\eps>0$, there is an $N$ such that
 \Be
\label{l2222}\Big\| \prod_{i=1}^k
F_i\Big\|_{L^\frac{2}{k-1}(B(0,\delta^{-1}))} \lesssim C_\epsilon
\sigma^{-\kappa}\delta^{-\frac\alpha 2-c\eps} \delta^{\frac{d}2}
\prod_{i=1}^k\|F_i\|_2 \Ee holds uniformly for $\psi\in \fge$. In what follows we set $R=\delta^{-1}$.

Iteration of implication from \eqref{l22} to \eqref{l2222} allows us
to suppress $\alpha$ as small as $\sim \eps$. In fact, since the
implication remains valid as long as $\psi\in \fge$,
by fixing an $\eps$ and iterating the implication \eqref{l22} $\to$
\eqref{l2222} $l$ times we have the bound 
\[C_\eps^l\sigma^{-\kappa l} R^{2^{-l}\alpha+
c\eps(1+2^{-1}\eps+\dots+2^{-l+1})}\le C_\eps^l \sigma^{-\kappa l}
R^{2^{-l}\alpha+ 2c\eps}.\] 
Choosing $l$ such that $2^{-l}\alpha\sim \eps$ gives the bound
$\widetilde C_\eps\sigma^{Ck\log\frac{\alpha}\eps} R^{C\eps}.$
Hence, taking $\eps=\epsilon/C$, we get the desired bound.

Let  $\{\mbq\,\}$ be the collection of dyadic cubes (hence
essentially disjoint) of sidelength $\ell$,
 $\ell< R^{-1/2}\le 2\ell$,
 so  that $\mathbb R^d=\bigcup \mbq $. Since the Fourier transform of $\rho_{B(z,\sqrt R)} F_i$ is
 supported in $\Gamma(\delta^\frac 12)\cap {\mathrm N}^{-1}(\Pi+O(\delta^\frac 12))$, by the assumption it follows  that
 \begin{align*}
&\Big\| \prod_{i=1}^k F_i\Big\|_{L^{\frac 2{k-1}}(B(z,R^{\frac12}))}
\lesssim \Big\| \prod_{i=1}^k \rho_{B(z,\sqrt R)}
F_i\Big\|_{L^{\frac 2{k-1}}(B(z,R^{\frac12}))}\\ &\lesssim
\delta^{-\frac\alpha 2} \delta^{\frac d4}\prod_{i=1}^k  \|
\rho_{B(z,\sqrt R)} F_i\|_{2} \lesssim \delta^{-\frac\alpha 2}
\delta^{\frac d4}\prod_{i=1}^k \Big\| \rho_{B(z,\sqrt R)}
\Big(\sum_{\mbq}|F_{i\,\mbq}|^2\Big)^\frac12 \Big\|_{2}.
\end{align*}
Here $F_{i\,\mbq}$ is given by $\mathcal F(F_{i\,\mbq})=\widehat{F_i}\chi_\mbq$. 
Since the supports of $\mathcal F(\rho_{B(z,\sqrt R)}F_{i\,\mbq})$
are boundedly overlapping,  the last inequality follows from
Plancherel's theorem. By rapid decay of $\rho$ we have, for a large 
$M>0$, \Be\label{lsq}
\begin{aligned}
 \Big\| \prod_{i=1}^k
F_i\Big\|_{L^{\frac 2{k-1}}(B(z, \sqrt R))}  \lesssim\delta^{-\frac\alpha 2} \delta^{\frac d4}\prod_{i=1}^k
\Big\| \chi_{B(z,R^{\frac12+\eps})}
\Big(\sum_{\mbq}|F_{i\,\mbq}|^2\Big)^\frac12 \Big\|_{2} + \delta^M
\prod_{i=1}^k \|F_i\|_2.
\end{aligned}
\Ee

For a given $\xi\in {\mathrm N}^{-1}(\Pi)$, let $\{v_1,\dots, v_{k-1}\}$ be an orthonormal basis for the
tangent space $T_{\xi}({\mathrm N}^{-1}(\Pi))$ at $\xi$,
$v_k={\mathrm N}(\xi)$, and let $v_{k+1}$, $\dots,$ $v_d$ form an
orthonormal basis for $(\text{span}\{v_1,\dots, v_{k-1}, 
v_k\})^{\perp}$. (So, the vectors  $v_1,\dots, v_{k-1},$  $v_{k+1}$, $\dots,$ $v_d$ depend on $\xi\in {\mathrm N}^{-1}(\Pi)$.)  Then,  we   define $\mathbf p(\xi)$ and $\mathbf P(\xi) $ by
\begin{align*}
&\mathbf p(\xi)=\xi+ \big\{x: |x\cdot v_j|\le C_1\sqrt \delta,\,
j=1, \dots, k-1,\,
  |x\cdot v_j|\le C_1\delta,\, j=k+1, \dots, d\,\big\},\\
 &{\mathbf P}(\xi)=\big\{x: |x\cdot v_j|\le  C\sqrt\delta 
,\, j=1, \dots, k-1,\,
  |x\cdot v_j|\le C,\, j=k+1, \dots, d\,\big\}.
  \end{align*}
 Since ${\mathrm N}^{-1}(\Pi)$ is smooth, ${\mathrm
N}^{-1}(\Pi)+O(\delta)$ can be covered by a collection of  boundedly
overlapping $\{\mathbf p(\xi_\alpha)\}$, $\xi_\alpha\in {\mathrm
N}^{-1}(\Pi)$ (here, we are seeing   ${\mathrm N}^{-1}(\Pi)$ as a subset of $\mathbb R^d$), such that for any $\mbq$ there exists
$\xi_\alpha$ satisfying \Be \label{include}\supp\, \widehat F_i\cap
\mbq\subset  \frac12\, \mathbf p(\xi_\alpha)\Ee with a sufficiently
large $C_1>0$. 

For $(i, \mbq)$ satisfying  $\supp\,\widehat F_i\cap \mbq\neq
\emptyset$   let us denote by $\xi_{i, \mbq}$ the $\xi_\alpha$ which
satisfies \eqref{include} (if there are more than one, we simply 
choose one of them).
We also denote by  $L(i,\mbq)$  the bijective affine map from
$\frac12\,\mathbf p(\xi_{i,\mbq})$ to $\fq(0,1)$. Then we define  
$\widetilde{ F_{i\,\mbq}}$ by
\[\mathcal F(\widetilde{ F_{i\,\mbq}})(\xi)=\frac1{\rho(L(i,\mbq)\xi)} \widehat {F_{i\,\mbq}}(\xi)\,.\]
We also set
$\mathbf P_{i,\mbq}=\mathbf P(\xi_{i, \mbq})$ and $\mbK_{i,\mbq}=\mathcal F^{-1}(\rho(L(i,\mbq) \,\cdot\,)).$  By $R {\mathbf P}_{i,\mbq}$ we denote the rectangle which is $R$ times dilation of ${\mathbf P}_{i,\mbq}$ from the center of ${\mathbf P}_{i,\mbq}$.  Also denote by $\widetilde {\mathbf P}_{i,\mbq}$ 
the  set $R^{1+\eps} {\mathbf P}_{i,\mbq}$ which is  the $R^{1+\varepsilon}$ times dilation of ${\mathbf P}_{i,\mbq}$ from its center.
Since $\mbK_{i,\mbq}\ast\widetilde{ F_{i\,\mbq}}=F_{i\,\mbq}$ and
$|\mbK_{i,\mbq}| \lesssim \frac{\chi_{R{\mathbf P}_{i,\mbq}}}{|R{\mathbf P}_{i,\mbq}|}$,
 we have, for $y\in B(x,2R^{\frac12+\eps})$ and some $c>0$, 
\[
|F_{i\,\mbq}(y)|^2=|\mbK_{i,\mbq}|\ast |\widetilde{F_{i\,\mbq}}|^2(y)
\lesssim \frac{\chi_{R{\mathbf P}_{i,\mbq} }}{|R{\mathbf P}_{i,\mbq} |}\ast|\widetilde{F_{i\,\mbq}}|^2(y)
\lesssim 
R^{c\eps}\frac{\chi_{\widetilde {\mathbf P}_{i,\mbq} }}{|\widetilde {\mathbf P}_{i,\mbq} |}
\ast|\widetilde{F_{i\,\mbq}}|^2(x).
\]
The last inequality is trivial  since $|\widetilde {\mathbf P}_{i,\mbq}|\sim R^{c\epsilon} |R{\mathbf P}_{i,\mbq}|$ for some $c>0$.  Hence,    for $x,y\in B(z,
R^{\frac12+\eps})$ we have
\Be \label{easy1} 
 \sum_{\mbq}|F_{i\,\mbq}|^2(y) \lesssim
 R^{c\eps}
\sum_{\mbq}\frac{\chi_{\widetilde {\mathbf P}_{i,\mbq} }}
{|\widetilde {\mathbf P}_{i,\mbq}
|}\ast|\widetilde{F_{i\,\mbq}}|^2(x).
\Ee
Taking integration in $y$ over $B(z, R^{\frac12+\eps})$ for each $1\le i\le k$, we see that, for $x\in B(z, R^{\frac12+\eps})$,
\Be 
 \prod_{i=1}^k \Big\| \chi_{B(z,R^{\frac12+\eps})}
\Big(\sum_{\mbq}|F_{i\,\mbq}|^2\Big)^\frac12 \Big\|_{2}\lesssim
R^{c\eps} R^\frac{dk}4 \prod_{i=1}^k
\Big(\sum_{\mbq}\frac{\chi_{\widetilde {\mathbf P}_{i,\mbq}
}}{|\widetilde {\mathbf P}_{i,\mbq} |}
\ast|\widetilde{F_{i\,\mbq}}|^2\Big)^\frac12(x).\Ee
 Now, integration
in $x$ over $B(z, R^{\frac12+\eps})$ yields
\Be 
\prod_{i=1}^k \Big\| \chi_{B(z,R^{\frac12+\eps})}
\Big(\sum_{\mbq}|F_{i\,\mbq}|^2\Big)^\frac12 \Big\|_{2}\lesssim
R^{c\eps} R^{\frac d4} \Big\|\prod_{i=1}^k
\Big(\sum_{\mbq}\frac{\chi_{\widetilde {\mathbf P}_{i,\mbq}
}}{|\widetilde {\mathbf P}_{i,\mbq} |}
\ast|\widetilde{F_{i\,\mbq}}|^2\Big)^\frac12 \Big\|_{{L^\frac
2{k-1}(B(z,R^{\frac12+\eps}))}}.
\Ee 
Combining this with \eqref{lsq} 
we have, for any large $M>0$,
\Be
\label{easy2}
\Big\| \prod_{i=1}^k F_i\Big\|_{L^{\frac
2{k-1}}({B(z,\sqrt R)})} \lesssim \delta^{-\frac\alpha
2-c\eps} \Big\| \prod_{i=1}^k
\Big(\sum_{\mbq}\frac{\chi_{\widetilde {\mathbf P}_{i,\mbq} }}{|\widetilde {\mathbf P}_{i,\mbq} |}
\ast|\widetilde{F_{i\,\mbq}}|^2\Big)^\frac12 \Big\|_{L^\frac 2{k-1}(B(z,R^{\frac12+\eps}))}
 + \delta^M \prod_{i=1}^k \|F_i\|_2\,.
\Ee

We now cover $B(0, R)$ with boundedly overlapping balls $B(z,\sqrt
R)$ and use the above inequality for each of them. Then we get
\[\begin{aligned}
 \Big\| \prod_{i=1}^k
F_i\Big\|_{L^{\frac2{k-1}}(B(0,R))} \lesssim \delta^{-\frac\alpha
2-c\eps} \Big\| \prod_{i=1}^k
\Big(\sum_{\mbq}\frac{\chi_{\widetilde {\mathbf P}_{i,\mbq}
}}{|\widetilde {\mathbf P}_{i,\mbq} |}
\ast|\widetilde{F_{i\,\mbq}}|^2\Big)^\frac12 \Big\|_{L^\frac
2{k-1}(B(0,\,2R))} + \delta^{M-C}\prod_{i=1}^k \|F_i\|_2\,.
\end{aligned}\]
Here we have an increased $c$ because of overlapping of the balls  $B(z,R^{\frac12+\eps})$ in the right hand side. 
Since $\sum_\mbq \|\widetilde{F_{i\,\mbq}}\|_2^2\sim \|F_i\|_2^2$,
for \eqref{l2222}
 it is sufficient to show
\[\Big\| \prod_{i=1}^k
\Big(\sum_{\mbq}\frac{\chi_{\widetilde {\mathbf P}_{i,\mbq}
}}{|\widetilde {\mathbf P}_{i,\mbq} |}
\ast|\widetilde{F_{i\,\mbq}}|^2 \Big)\Big\|_{L^\frac
1{k-1}(B(0,\,2R))} \lesssim \sigma^{-\kappa}\delta^{\frac{d}2-c\eps}
\prod_{i=1}^k\Big(\sum_\mbq \|\widetilde{F_{i\,\mbq}}\|_2^2\Big).\]
By rescaling this is equivalent to \Be\label{multi-kakeya}\Big\|
\prod_{i=1}^k \Big(\sum_{\mbq}\frac{\chi_{{\mathbf P}_{i,\mbq}
}}{|{\mathbf P}_{i,\mbq} |} \ast f_{i,\mbq} \Big)\Big\|_{L^\frac
1{k-1}(B(0,2))} \lesssim \sigma^{-\kappa} R^{c\eps}
\prod_{i=1}^k\Big(\sum_{\mbq} \|f_{i,\mbq}\|_1\Big).\Ee

Let $\mathcal I_{i}=\{\mbq: \supp\,\widehat F_i\cap \mbq\neq \emptyset\}$,  $I_i\subset \mathcal I_i$ and $\mathcal T_{i,\mbq}$ be a finite subset of
$\mathbb R^d$.  Allowing the loss of $(\log R)^C$ in bound, by a standard reduction with pigeonholing it suffices to show
\Be\label{multi-kakeya0}
\Big\| \prod_{i=1}^k
\Big(\sum_{\mbq\in I_i}\sum_{\tau\in \mathcal T_{i,\mbq}}{\chi_{{\mathbf P}_{i,\mbq} +\tau}}
 \Big)\Big\|_{L^\frac 1{k-1}(B(0,2))}
\lesssim \sigma^{-\kappa/2} R^{c\eps} \prod_{i=1}^k\Big(\sum_{\mbq\in I_i}\sum_{\tau\in
\mathcal T_{i,\mbq}}{|{\mathbf P}_{i,\mbq}+\tau|}\Big).\Ee We write
$x=(u,v)\in \Pi\times \Pi^\perp\,(=\mathbb R^d)$. Then the left hand
side is clearly bounded by
\[\sup_{v\in \Pi^\perp} \Big\| \prod_{i=1}^k
\Big(\sum_{\mbq\in I_i}\sum_{\tau\in \mathcal T_{i,\mbq}}{\chi_{{\mathbf P}_{i,\mbq} +\tau}} (\cdot, v)
 \Big)\Big\|_{L^\frac 1{k-1}(\widetilde B(0,2))},\]
  where $\widetilde B(0,\rho)\subset \mathbb R^k$ is the ball of radius $\rho$ which is centered at the origin.

 For $v\in \Pi^\perp $ let us set
\[({\mathbf P}_{i,\mbq} + \tau)^v=\{u: (u,v)\in {\mathbf P}_{i,\mbq} + \tau \}.\]
Then $({\mathbf P}_{i,\mbq} + \tau)^v$ is contained in a tube of
length $\sim 1$ and width $CR^{-1/2}$ of which axes are  parallel
with $\mathrm N(\xi_{i,\mbq})$. This is because the longer sides of
${\mathbf P}_{i,\mbq}$ except the one parallel to $\mathrm
N(\xi_{i,\mbq})$
 are transversal to $\Pi$. More precisely, we can show that if $\epsilon_\circ$ is sufficiently
 small and $N$ is large enough,  there a constant $c>0$, independent of $\psi\in \fge$, such
 that,
  for $w\in \big(T_{\xi_{i,\mbq}}(\mathrm N^{-1} (\Pi))\oplus \text{span}\{\mathrm
  N(\xi_{i,\mbq})\}\big)^\perp$,
\Be \label{angle} \measuredangle(w, \Pi)\ge c>0.\Ee Since
\eqref{transverse} is satisfied whenever $\xi_i\in \Gamma_i$,
$i=1,\dots, k$,  $\mathrm N(\xi_{1,\mbq}),\dots,\mathrm
N(\xi_{k,\mbq})$ which are, respectively,  parallel to the axes of tubes $({\mathbf
P}_{1,\mbq} + \tau)^v,\dots, ({\mathbf P}_{k,\mbq} + \tau)^v$
satisfy
 $|V\!ol(\mathrm N(\xi_{1,\mbq}),\dots,\mathrm N(\xi_{k,\mbq}))\gtrsim \sigma$.
 Also note that $|{\mathbf P}_{i,\mbq}^v +\widetilde \tau|\sim |{\mathbf P}_{i,\mbq}|$.
 Hence, by the multilinear Kakeya estimate in $\mathbb R^k$ (Theorem \ref{k-kakeya}) it follows that
\[ \Big\| \prod_{i=1}^k
\Big(\sum_{\mbq,\tau}{\chi_{{\mathbf P}_{i,\mbq} +\tau}}(\cdot, v)
 \Big)\Big\|_{L^\frac 1{k-1}(\widetilde B(0,2))}
 \lesssim \sigma^{-1}\prod_{i=1}^k\Big(\sum_{\mbq,\tau}{|{\mathbf P}_{i,\mbq}+\tau|}\Big).\]
This gives the desired inequality \eqref{multi-kakeya0}.

Now it remains to show \eqref{angle}.  By continuity, taking sufficiently small $\epsilon_\circ$,  we only need to show \eqref{angle} when
$\psi=\psi_\circ$ since $\|\psi-\psi_\circ\|_{C^N(I^{d-1})}\le
\epsilon_\circ$.  Though it is easy to show and intuitively obvious, we include a proof for clarity.    By rotation we may assume
$\Pi\cap\{x_d=-1\}=\{(\mathbf y, \mathbf a,-1):
 \my\in \mathbb R^{k-1}\}$ for some $\mba\in \mathbb R^{d-k}$.
 Since $\Pi$ contains the origin, $\Pi$ can  parametrized (except $\Pi\cap \{x_d=0\}$) as follows:
\Be \label{parametrization} s(\my,\mba,-1), \, s\in \mathbb R, \,
\my\in \mathbb
 R^{k-1}.\Ee  We may assume  $\Gamma_i(\delta)\cap ({\mathrm
N}^{-1}(\Pi)+O(\delta))\neq \emptyset$ because otherwise $F_i=0$ and
there is nothing to prove. Since $\mathrm N(\Gamma)\cap
\Pi=\emptyset$ if $|\mba|$ is large, so we may assume that
$|\mba|\le C$ for some $C>0$ and note that $\xi_{i,\mbq}\in
\Gamma(\psi)$. 
Furthermore, it suffices to show that
 \Be\label{ttt}\Pi\cap \Big(T_{\xi_{i,\mbq}}(\mathrm N^{-1} (\Pi))\oplus \text{span}\{\mathrm N(\xi_{i,\mbq})\}\Big)^\perp=
 \{0\},\Ee
which implies  $\measuredangle(w, \Pi)>0$ if $w\in
\big(T_{\xi_{i,\mbq}}(\mathrm N^{-1} (\Pi))\oplus
\text{span}\{\mathrm
  N(\xi_{i,\mbq})\}\big)^\perp$.
  Then, by continuity  and compactness
  \eqref{angle} follows.
We now verify  \eqref{angle} with $\psi=\psi_\circ$. By rotation we may assume $\mba=(0,\dots,0, a)=:(\mathbf 0, a)\in \mathbb R^{d-k-1}\times \mathbb R$. Using the above parametrization of $\Pi$, we see that \[\Pi=\text{span}\{e_1, \dots, e_{k-1}, (0,\dots,0, \mathbf 0, a,-1) \}.\]  The normal vector at $(x',|x'|^2/2)\in \mathbb R^{d-1}\times \mathbb R$ is parallel to $(x',-1)$. Hence, if $(x',|x'|^2/2)\in \mathrm N^{-1}(\Pi)$, that is, $(x',-1)\in \Pi$,  then $x'$ takes the form $x'=(\my, \mba)$ because of \eqref{parametrization}. Hence, it follows that  $\mathrm N^{-1}(\Pi)=\{(\my,\mathbf 0, a,\frac12(|\my|^2+|a|^2))\}$.  Then, if $\xi_{i,\mbq}=(\my,\mathbf 0,a,\frac12(|\my|^2+a^2))$,  $T_{\xi_{i,\mbq}}(\mathrm N^{-1}(\Pi))$ is spanned by  $\my_1=(1,0,\dots, \mathbf 0,0, y_1),$ $\my_2=(0,1,\dots,0, \mathbf 0,0, y_2),  \dots, \my_{k-1}=(0,0,\dots,1, \mathbf 0,0,$ $ y_{k-1})$. For \eqref{ttt} it is sufficient to show that $\fP:=\Pi\cap (\text{span}\{(\my,\mathbf 0,a,-1), \my_1, \dots, \my_{k-1}\})^\perp=\{0\}$. Let $w\in \fP$.  Then, since  $w\in \text{span}\{e_1, \dots, e_{k-1}, $ $ (0,\dots,0, \mathbf 0,  a,-1) \}$, we may write $w  =(c_1,\dots, c_{k-1}, \mathbf 0, c_k a, -c_k). $ Also, $w\cdot \my_1=\cdots=w\cdot \my_{k-1}=w\cdot (\my, \mathbf 0, a,-1)=0$ gives  $c_1=\dots=c_k=0$. So, $v=0$ and, hence, we get \eqref{ttt}.  This completes the proof.
\end{proof}

\subsection{Scattered modulation sum of scale $\sigma$}
When the Fourier transform of a given function $f$ is supported in a
ball of radius $\sigma$, then $f$ behaves as though it were constant
on balls of radius $\sigma^{-1}$. This observation has important role in
Bourgain-Guth's argument \cite{bogu} and is widely taken for granted without being made rigorous. There seems to be several ways which
make this heuristic rigorous (see \cite{tv2, t2}).
For this purpose we make use of Fourier series expansion.

Fix $\sigma>0$ and  large positive constants $M=M(d)\ge 100d$ and $C_M$ which are to be chosen to be large. 
For $l\in \sigma^{-1} \mathbb Z^d$ we  set 
\Be\label{A-tau} A_l= A_l(\sigma)=C_M(1+|\sigma l|)^{-M}, \quad \tau_{l} f(x)=f(x-l).\Ee
For $\sigma>0$, we define $[F]_{\sigma}$,
$|\![F ]\!|_{\sigma}$ (scattered modulation sum of $\sigma$-scale)
by
\Be \label{scatteredd} [F]_{\sigma}(x)=\sum_{l\in \sigma^{-1}\mathbb Z^d}
A_{l}|\tau_{ l} F(x)|,\,\, |\![F ]\!|_{\sigma}(x)= \sum_{l_1,l_2\in   \sigma^{-1} \mathbb Z^d}
A_{l_1}A_{l_2}
 |\tau_{l_1+l_2} F(x)|.\Ee
We have the following lemma.

\begin{lem}\label{scattered}  Let $\xi_0, x_0\in \mathbb R^d$. Suppose that $F$ is a function with
$\widehat F$ supported in $\mathfrak q(\xi_0, \sigma)$. Then, if
$x\in \fq(x_0, 1/\sigma) $,
\[|F(x)|\le
[F]_{\sigma}(x_0)\le |\![F]\!|_{\sigma}(x).\]
\end{lem}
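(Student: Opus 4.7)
The plan is to realize rigorously the heuristic that $\widehat F \subset \fq(\xi_0,\sigma)$ forces $F$ to be essentially constant at scale $1/\sigma$, using a Fourier series expansion. First I would reduce to $\xi_0 = 0$: replacing $F$ by $G(x) = e^{-i\xi_0\cdot x}F(x)$ leaves $|F|$, $[F]_\sigma$, and $|\![F]\!|_\sigma$ unchanged while shifting $\supp\widehat F$ to $\fq(0,\sigma)$. Writing $x = x_0 + y$ with $|y|_\infty \le 1/\sigma$, I would pick a fixed bump $\tilde\chi\in C_c^\infty([-2,2]^d)$ identically $1$ on $[-1,1]^d$ and set $\chi(\xi):=\tilde\chi(\xi/\sigma)$, so that $\chi$ equals $1$ on $\supp\widehat F$ and has support in $\fq(0,2\sigma)$ with $\sigma$-independent derivative bounds.

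Since $\fq(0,2\sigma)\subset[-\pi\sigma,\pi\sigma]^d$ (because $\pi>2$), I would expand $\xi\mapsto e^{iy\cdot\xi}\chi(\xi)$ as a Fourier series on the torus $[-\pi\sigma,\pi\sigma]^d$, chosen precisely so that the dual lattice becomes $\sigma^{-1}\mathbb Z^d$:
\[
e^{iy\cdot\xi}\chi(\xi) \,=\, \sum_{n\in\mathbb Z^d} g_n(y)\, e^{in\cdot\xi/\sigma}, \qquad \xi\in[-\pi\sigma,\pi\sigma]^d.
\]
Plugging this into the Fourier inversion formula for $F(x_0+y)$ and swapping the sum and integral (justified by rapid decay of $g_n$ and compact support) converts the integral into a discrete sum
\[
F(x_0+y) \,=\, \sum_{n\in\mathbb Z^d} g_n(y)\,F(x_0 + n/\sigma).
\]
A change of variables $\eta=\xi/\sigma$ identifies $g_n(y) = \frac{1}{(2\pi)^d}\int \tilde\chi(\eta) e^{i(\sigma y - n)\cdot\eta}d\eta$, which is the Fourier transform of the fixed Schwartz function $\tilde\chi$ at $n - \sigma y$. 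Since $|\sigma y|_\infty\le 1$, this yields $|g_n(y)|\ls_M (1+|n-\sigma y|)^{-M}\ls_M (1+|n|)^{-M}$ uniformly, with constants independent of $\sigma$. Relabeling $l=-n/\sigma\in\sigma^{-1}\mathbb Z^d$ and using $A_{-l}=A_l$ then gives
\[
|F(x)| \,\le\, \sum_{l\in\sigma^{-1}\mathbb Z^d} A_l\,|F(x_0-l)| \,=\, [F]_\sigma(x_0),
\]
provided $C_M$ in \eqref{A-tau} is chosen large enough to absorb the Schwartz-norm constants of $\tilde\chi$.

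The second inequality $[F]_\sigma(x_0) \le |\![F]\!|_\sigma(x)$ then follows by a single iteration of the first. For each $l\in\sigma^{-1}\mathbb Z^d$ the point $x_0-l$ lies in $\fq(x-l,\,1/\sigma)$, since $(x_0-l)-(x-l)=-y$ has $|\cdot|_\infty\le 1/\sigma$; applying the first inequality with base point $x-l$ gives $|F(x_0-l)|\le [F]_\sigma(x-l) = \sum_{l'}A_{l'}|F(x-l-l')|$, and summing against $A_l$ collapses directly to $|\![F]\!|_\sigma(x)$.

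The only technical point requiring care is matching the dual-lattice spacing of the Fourier series with $\sigma^{-1}\mathbb Z^d$: the torus of half-side $\pi\sigma$ produces spacing exactly $1/\sigma$, and fitting $\supp\chi\subset \fq(0,2\sigma)$ inside this torus is precisely what requires $\pi>2$. Everything else is mechanical: the uniform decay of $g_n(y)$ in $|y|_\infty\le 1/\sigma$ is immediate from Schwartz decay of the fixed bump $\tilde\chi$, and the bound for $|\![F]\!|_\sigma$ is just the first inequality applied once more to each term of $[F]_\sigma(x_0)$.
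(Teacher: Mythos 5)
Your proof is correct and follows essentially the same route as the paper: expand a modulation times a cutoff in Fourier series on a rescaled torus with dual lattice $\sigma^{-1}\mathbb Z^d$, use Schwartz decay of the fixed bump to bound the coefficients uniformly, and obtain the second inequality by applying the first once more to each translate. The only cosmetic difference is that you reduce to $\xi_0=0$ and compute the coefficients $g_n(y)$ explicitly as shifts of $\widehat{\tilde\chi}$, whereas the paper inserts a bump in both the $x$ and $\xi$ variables and reads off the coefficient decay directly from the smoothness of $A(x,\xi)=a(x)a(\xi)e^{ix\cdot\xi}$; both amount to the same Fourier-series argument.
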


 It should be noted  that the inequality holds regardless of
$\xi_0, x_0,$ and $\sigma$. 

\begin{proof}
Let $a$ be a smooth function
supported in $[-\pi, \pi]^d$ and $a(x)=1$ if $|x_i|\le 1,$ $i=1,\dots, d$.
Let us set
\[A(x,\xi)=a(x)a(\xi) e^{ix\cdot \xi}.\]
Since $|\partial^\alpha_\xi A|\le C_\alpha$ for any multi-indices $\alpha$,
by expanding $A$ into  Fourier series  in $\xi$ we have
\begin{equation}\label{fseries}
a(x)a(\xi) e^{ix\cdot \xi}=\sum_{l\in \mathbb Z^d} a_l(x) e^{-i \xi \cdot l}, \quad x,\, \xi\in [-\pi, \pi]^d
\end{equation}
while $a_l$ satisfies  $|a_l(x)|\le C_M(1+|l|)^{-M}$ for any large $M>0$.
On the other hand, from the inversion formula we have
\[F(x)= (2\pi)^{-d}\int e^{i(x-x_0)\cdot \xi_0} e^{i
(x-x_0)\cdot (\xi-\xi_0)}  e^{ix_0\cdot \xi} \widehat F(\xi) d\xi.\]
Hence, since $x\in \mathfrak q(x_0, \frac1\sigma)$, inserting the harmless bump function
$a$, we may write \begin{align*}
  &F(x)
= (2\pi)^{-d}\, e^{i(x-x_0)\cdot \xi_0}\int A\Big(\sigma({x-x_0}) ,
\frac{\xi-\xi_0}\sigma\Big) e^{ix_0\cdot \xi}\, \widehat F(\xi)\, d\xi.
\end{align*}
Using  \eqref{fseries} we have
\[\begin{aligned}
  &F(x)= (2\pi)^{-d}\, e^{i(x-x_0) \cdot \xi_0} \sum_{l\in\mathbb Z^d}  a_l\big(
\sigma({x-x_0})\big) \int e^{-i \frac{(\xi-\xi_0)}\sigma\cdot l} e^{ix_0\cdot
\xi}\, \widehat F(\xi)\, d\xi.
\end{aligned}
\]
 Then it follows that
\begin{equation}
\label{fexpansion}|F(x)|\le \sum_{l\in\smathbbz^d} A_l |\tau_{l} F(x_0)| \le \sum_{l_1,l_2\in \smathbbz^d}
A_{l_1}A_{l_2}
 |\tau_{(l_1+l_2)} F(x)|\,.
 \end{equation}
The second inequality follows by applying the first one to
each $\tau_{l} F$ with the roles of $x,$ $x_0$ interchanged.
\end{proof}

 \subsection{Multi-scale  decomposition}\label{multi-scale}
We now attempt to bound part of $T_\delta f$ with a sum of products
which satisfy the transversality assumption while the remaining parts
are given by  a sum of functions which have relatively small Fourier
supports. The first is rather directly estimated by making use of the
multilinear estimates and the latter is to be handled by Proposition
\ref{rescale}, the induction assumption and Lemma \ref{vector}. 

 In what follows, we basically adapt  the idea in \cite{bogu}.  However, concerning  the decomposition  in \cite{bogu}, reappearance of many small scale functions  in large scale decomposition  becomes  problematic  when one attempts to sum up  resulting estimates.  For the adjoint restriction estimates this can  be overcome  by using $L^\infty$-function as was done in \cite{bogu}.  But such argument doesn't work  for the multiplier operators and  leads loss in its bound.  To get over this,  unlike the decomposition in \cite{bogu} where  one starts to decompose with $d$-linear products and proceeds by reducing the
degree multi-linearity based on dichotomy, we decompose the multiplier operator by
increasing the degree of multi-linearity in order to avoid small scale functions appearing inside of  large scale ones. 
 This has a couple of advantages. First, this allows us to keep the function relatively intact  in the course of decomposition so that  we can easily add up decomposed pieces to obtain the sharp $L^p$ bound.        Secondly,   the decomposition makes it possible to obtain directly obtain  $L^p$-$L^p$  estimate. Hence we don't need to rely on the factorization theorem to deduce $L^p$-$L^p$  from $L^\infty$-$L^p$. (The same is also true for the adjoint restriction operators.) Hence, we can obtain the sharp $L^p$ bounds for multiplier operators of Bochner-Riesz type which lacks  symmetry.

\subsubsection{Spatial and frequency dyadic cubes} Let $0<\epsilon_\circ\ll 1$, $1\ll N$, $\psi\in \fge$, and
$T_\delta$ be given by \eqref{tdel}. Let
$\kappa=\kappa(\epsilon_\circ, N)$ be the number given in
Proposition \ref{rescale} so that \eqref{rescalee} holds whenever
$0<\eps\le \kappa$ and $\psi\in \fge$.   Let $m$ be an integer such that $2\le m\le d-1$, and $ \sigma_1, \dots,
\sigma_m$ be dyadic numbers such that \Be\label{dyadics}\delta\ll
\sigma_{m}\ll \dots \ll \sigma_1\ll \min(\kappa,1).\Ee These
numbers will be specified to terminate induction. We call $\sigma_i$ $i$-\emph{th  scale}.

Let us denote by $\{\fq^i\}$  the collection of the dyadic cubes $\fq^i$ of sidelength $2\sigma_i$ which are contained in $I^d$ (so, $\fq^i$ denotes the member of $\{\fq^i\}$ and the cubes $\fq^i$ are
essentially disjoint).  Rather than introducing new notation to denote  each collection of $\fq^i$, we take the convention that  $\{\fq^i\}$ denotes the collection of all dyadic cubes of  sidelength $2\sigma_i$ contained in $I^{d}$ if it is not specified otherwise. For each $i$-th scale there is a unique collection so that  there will be no  ambiguity, and we also use $\fq^i$ as indices which run over the set  $\{\fq^i\}$.   Thus,  we may write 
\Be\label{icube}
\bigcup_{\fq^i} \fq^i= I^d.
\Ee 
For the rest of this section,  we assume that 
\[\supp \widehat f\,\subset \frac12 I^d.\] Since $f= \sum_{\fq^i} \ffqi$, for
$i=1,\dots, m$, we write
 \Be \label{i-scale}
 \tdel f= \sum_{\fq^i} \tdel \ffqi.
 \Ee Clearly, we may
assume that $\fq^i$ is contained in $C\sigma_i$- neighborhood of the
surface $\Gamma(\psi)$ because $\tdel \ffqi=0$ otherwise.  So, in what follows, 
$\fq^i$, $\fq^i_1, \dots, \fq^i_{i+1}$ and $\fq^i_\ast$
denote the elements of $\{\fq^i\}$.

For convenience we extend in a trivial way the map $\mathrm N$ defined on
$\Gamma(\psi)$ to the cube $I^d$ by setting, for
$\xi=(\zeta,\tau)\in I^d$,
\[\mbn(\zeta,\tau)=\mathrm N(\zeta, \psi(\zeta)).\]
This extension is not  necessarily needed in what follows  because
we only consider a small neighborhood of $\Gamma(\psi)$. However, this allows us to 
define normal vector for any point  in $I^d$  and  makes  exposition simpler. This definition 
of $\mbn$ becomes coherent with the one given in the next section. 

\begin{defn}\label{transcubes} Let $k$ be an integer such that $1\le k\le m$ and  fix a constant $c>0$. 
 Let $\qqq {k} 1,\dots, \qqq {k}{k+1}\in \{\qqq k{}\}$ ($k$-th scale cubes). 
We say {\it $\qqq {k} 1,$ $\qqq {k}2,$ $\dots,$ $\qqq {k}{k+1}$ are $(\sigma_1,\sigma_2,\dots,\sigma_{k})$
transversal} if 
\Be\label{trans-k+1} V\!ol(\mbn(\xi_1), \mbn(\xi_2), \dots, \mbn(\xi_{k+1}))\ge c \sigma_1\sigma_2\dots\sigma_{k}, \Ee
whenever $\xi_i\in {\qqq {k}i}$, $i=1,\dots, k+1$. And we simply  denote
this by $\qqq {k} 1,$ $\qqq {k}2,$ $\dots,$ $\qqq {k}{k+1}: trans$ omitting dependence on  $\sigma_1,\sigma_2,\dots,\sigma_{k}$.
\end{defn}

Let us set  \[M_i=\frac1{\sigma_i},   \,\,  i=1,\dots, m.\] We denote by $ \{ \fQ^i \}$ the
collection of the dyadic cubes of sidelength $2M_i$, which
covers $\mathbb R^d$ (so, $\fQ^i$ again denotes a member of the sets
$\{ \fQ^i \}$). We write 
 \Be\label{spatialcube}
 \bigcup_{\fQ^i} \fQ^i=\mathbb R^d.\, \footnote{ Here we take the same convention for $\{ \fQ^i\}$ as we do for $\{\fq^i\}$.}
\Ee
 Since the Fourier support
of $\tdel \ffqi$ is contained $\fq^i$, it may be thought of as a
constant on $\fQ^i$ by invoking Lemma \ref{scattered} with
$\sigma=\sigma_i$. Since the scale $\sigma_i$ is clear from the side
length of the cube $\fq^i$, we simply set
\[
[\tdel f_{\!\fq^{i}}]:=[\tdel f_{\!\fq^{i}}]_{\sigma_i}, \,\, \,
|\![\tdel f_{\!\fq^{i}}]\!|:=|\![\tdel f_{\!\fq^{i}}]\!|_{\sigma_i}.
\]

\subsubsection{$\sigma_1$-scale decomposition} 
Bilinear decomposition is rather elementary. Fix $x\in \mathbb R^d$.  From
\eqref{i-scale} note that 
\[ |\tdel f(x)|\le \sum_{\qq1}|\tdel \ffqq 1(x)|\,.\] 
We
denote by $\qqq{1}{\ast}=\qqq{1}{\ast}(x)$  a cube $\qqq{1}{}\in \{\qq1\}$  such
that
$ |T_\delta f_{\qqq{1}{\ast}}(x)| =\max_{\qq1}|T_\delta f_{\qqq{1}{\ast}}(x)|.$
(There may be many such cubes
but $\qqq{1}{\ast}$ denotes just one of them.) Then we consider the following two
cases separately:
\[ \sum_{\qq1}|\tdel \ffqq 1(x)|\le 100^d  |T_\delta f_{\qqq{1}{\ast}}(x)|, \
\sum_{\qq1}|\tdel \ffqq 1(x)|>  100^d  |T_\delta f_{\qqq{1}{\ast}}(x)|.\]
For the second case 
$\sum_{\dist(\qq1,\qqq{1}{\ast})< 10\sigma_1} |\tdel \ffqq 1(x)|  < 50^d
 |T_\delta f_{\qqq{1}{\ast}}(x)| \le 2^{-d}\sum_{\qq1}|\tdel \ffqq 1(x)|.$ 
 Hence there is
 $\qqq11\in \{\qqq1{}\}$ such that $\dist(\qqq11,
\qqq{1}{\ast})\ge 10\sigma_1 $ 
and  
\[\sum_{\qq1} |\tdel \ffqq 1(x)| \lesssim  \sigma_1^{-{(d-1)}} |\tdfff 11{}(x) |\le \sigma_1^{-{(d-1)}}
|\tdfff 11{} (x)T_\delta f_{\qqq{1}{\ast}}(x)|^\frac12.\] 
From these two cases
we get  
\begin{align}
\label{scale1}
\sum_{\qq1}&|\tdfff 1{}(x)|
 \lesssim \max_{\qq1}| \tdfff 1{}(x)|+C\sigma_1^{-(d-1)/2}\max_{\substack{\dist(\qqq11,\qqq12)\gtrsim
\sigma_1}}|\tdfff 11{}(x) \tdfff 12{}(x)|^\frac12.
\end{align}
Using imbedding $\ell^p\subset\ell^\infty$, Proposition \ref{rescale} and Lemma \ref{vector} give
\Be\label{maxp}
\|\max_{\qq1}| \tdfff 1{}\|_p\le
\Big(\sum_{\qq 1} \| \tdfff 1{}\|_p^p\Big)^\frac1p 
 \le \Big(\sum_{\qq 1} A(\sigma_1^{-2}\delta)^p\|f_{\qq
1}\|_p^p\Big)^\frac1p \lesssim  A(\sigma_1^{-2}\delta)\|f\|_p.
\Ee
 Hence,  combining this with \eqref{scale1},  we have
\begin{align}\label{decomp1}
\|\tdel f\|_p \lesssim   A(\sigma_1^{-2}\delta)\|f\|_p+
\sigma_1^{-C}
\max_{\dist(\qqq11,\qqq12)\gtrsim \sigma_1}\| \tdfff 11 \tdfff
12\|_{\frac p2}^\frac12.
\end{align}
We now proceed to decompose the bilinear expression appearing  in the left hand side.

In the following section we explain how one can achieve trilinear decomposition out of \eqref{decomp1} before 
we  inductively obtain the full  $k$ linear decomposition which we need for the proof of Theorem \ref{main}.   Once one gets familiar with it,  extension to higher degree of multi-linearity becomes more or less obvious.

\subsubsection{$\sigma_2$-scale decomposition}\label{s2scale}    Suppose that we are
given two cubes $\qqq11$ and $\qqq12$ of $1$st scale such that
$\dist(\qqq11,\qqq12)$ $\gtrsim \sigma_1$. For $i=1,2$, we denote by
$\{\qqq2i\}$ the collection of dyadic cubes $\qqq2i$ of sidelength
$\sigma_2$ contained in $\qqq1i$ such that \Be\label{dyadics2}
 \qqq1i=\bigcup_{\qqq2i} \qqq2i, \,\,\,i=1,2.\Ee
 We also denote by $\{\qq2\}$ the set
$\{\qqq21\}\cup \{\qqq22\}$. 
Then it follows that
\Be\label{decomp-2} \tdelf 1i=\sum_{\qqq 2i} \tdelf 2i, \ \
i=1,2.\Ee We may also assume that $\qqq21$, $\qqq22$ are contained in the
$C\sigma_2$-neighborhood of $\Gamma(\psi)$ because $\tdelf 21$,
$\tdelf 22$ are zero otherwise.

Decomposition from this stage is no longer simple as in the  $\sigma_1$-scale  case.  We need to use spatial localization in order  to compare the values of the decomposed pieces. This makes it possible to bounds large part of the operator with transversal  products.

 Let us fix a
cube $\fQ^2$ and $x_0$ be the center of $\fQ^2$. Let
$\qqq2{1\ast}\in \{\qqq21\}$, $\qqq2{2\ast}\in \{\qqq22\}$ be the
cubes such that
\[ \tdff{2}{1\ast}{}\!(x_0)=\max_{\qqq21} \tdff 21{}(x_0), \quad 
\tdff{2}{2\ast}{}\!(x_0)=\max_{\qqq22} \tdff 22{}(x_0).\]
 Let us define 
 $\Lambda_i^2\subset  \{ {\fq^2_i}\}$, $i=1,2,$  by
 \[
\Lambda_i^2= \big\{\qqq2i: \tdff2i{}(x_0)\ge \sigma_2^{2d}
\max\big(\tdff{2}{1\ast}{}\!(x_0),\,\tdff{2}{2\ast}{}
\!(x_0)\big)\big\} .\]
Using \eqref{decomp-2}, we split the summation to get
\Be\label{decomp-3}\tdelg 11 \tdelg 12=\sum_{(\qqq21, \qqq 22)\in
\Lambda_1\times \Lambda_2} \tdelg 21\tdelg 22+ \sum_{(\qqq21, \qqq
22)\not\in \Lambda_1\times \Lambda_2} \tdelg 21\tdelg 22.\Ee Since
there are at most $O(\sigma_2^{-2(d-1)})$ $(\qqq21, \qqq 22)$,  the
second sum in the right hand side is bounded by
\Be\label{err1}\sum_{(\qqq21, \qqq 22)\not\in \Lambda_1\times
\Lambda_2} |\tdelg 21(x)||\tdelg 22(x)| \le \sigma_2^{d}\max_{\qqq
2{}}(\tdff{2}{}{}(x_0))^2. \Ee

For a cube $\mathfrak q$ we denote by $\mathbf c(\mathfrak q)$ the center of $\mathfrak q$.
Let $\Pi=\Pi(\qqq2{1\ast}, \qqq2{2\ast})$ be the $2$-plane which is
spanned by  $\mbn_1=\mbn(\mathbf c({\qqq2{1\ast}}))$,
$\mbn_2=\mbn(\mathbf c({\qqq2{2\ast}}))$,  and define
\Be\label{neighbor} \mathfrak N=\fN(\fQ^2, \qqq11, \qqq12)
=\big\{\fq^2\in \Lambda^2_1\cup \Lambda^2_2  : \dist(\mbn(\fq^2),
\Pi)\le C\sigma_2 \big \}.\Ee
Clearly, $V\!ol(\mbn_1, \mbn_2)\gtrsim \sigma_1$ and
$\dist(\mbn(\qqq2{}), \Pi)\gtrsim \sigma_2$ if
 $\qqq2{}\not \in \mathfrak N$. Since $\sigma_1\gg \sigma_2$,  if $\qqq2{}\not \in \mathfrak N$, then
$ V\!ol(\mbn_1,\mbn_2, \mbn(\xi))\gtrsim \sigma_1\sigma_2 $ for
$\xi\in {\qqq2{}}$.  Also,
$\mbn(\qqq2{i\ast})\subset\mbn_i+O(\sigma_2)$, $i=1,2$. So, it
follows that 
\Be\label{trans-2}
V\!ol(\mbn(\xi_1), \mbn(\xi_2), 
\mbn(\xi_3))\gtrsim \sigma_1\sigma_2\Ee if $\xi_1\in \qqq2{1\ast}$,
$\xi_2\in \qqq2{2\ast}$, and $\xi_3\in \qqq2{}\not \not\in \mathfrak
N.$ That is,  $\qqq2{1\ast},$ $\qqq2{2\ast}$, $\qqq2{}$ are
transversal.
Hence, we split $\sum_{(\qqq21, \qqq 22)\in \Lambda_1\times
\Lambda_2}\tdelg 21\tdelg 22$ into
\Be\label{decomp-4}\sum_{\substack{(\qqq21, \qqq 22)\in \Lambda_1\times \Lambda_2\,:\,
\qqq21,\qqq22\in \fN}}
\tdelg 21(x)\tdelg 22(x)+\sum_{\substack{(\qqq21, \qqq 22)\in \Lambda_1\times \Lambda_2\, :\,
\qqq21 \text{ or } \qqq22\not\in \fN}}\tdelg 21(x)\tdelg 22(x).\Ee
Each term appearing in the second sum can be bounded by a product of
three operators which satisfy transversality condition. Indeed,
suppose that $(\qqq21, \qqq 22)\in \Lambda_1\times \Lambda_2$ and
$\qqq22\not\subset \fN$. The case that $\qqq21\not\subset \fN$ can
be handled similarly by symmetry.  Since $\tdff22{}(x_0)\ge
\sigma_2^{2d} \tdff{2}{1\ast}{}\!(x_0)$, we have
\begin{align*}\tdff
21{}(x_0)\tdff 22{}(x_0) &\le \big(\tdff 2{1\ast}{}(x_0)\tdff
2{2\ast}{}(x_0))^\frac23 (\tdff 21{}(x_0)\tdff22{}(x_0)\big)^\frac13 \\
&\le \sigma_2^{-2d/3} \big(\tdff 2{1\ast}{}(x_0)\tdff
2{2\ast}{}(x_0)\tdff 22{}(x_0)\big)^\frac23.
\end{align*}
Hence, from this and \eqref{trans-2} it follows that
\Be\label{transss}
\Big|\sum_{\substack{(\qqq21, \qqq 22)\in \Lambda_1\times \Lambda_2 \, : \, 
\qqq21 \text{ or } \qqq22\not\in \fN}}\tdelg 21(x)\tdelg
22(x)\Big| \le \sigma_2^{-C}\sum_{\qqq21, \qqq 22, \qqq23:trans}
\big(\prod_{i=1}^3\tdff 2{i}{}(x_0)\big)^\frac23. \Ee
We combine \eqref{decomp-3}, \eqref{err1}, \eqref{decomp-4} and
\eqref{transss} to get,   for $x\in \fQ^2$,
\begin{align*}
|\tdelg 11(x)\tdelg 12(x)|
&\le \sigma_2^{d} (\max_{\qq2}\tdff{2}{}{}(x_0))^2+
|\sum_{\substack{(\qqq21, \qqq 22)\in \Lambda_1\times \Lambda_2\,:\,
\qqq21,\qqq22\in \fN}}\tdelg 21(x)\tdelg 22(x)|
\\
&\qquad+
\sigma_2^{-C}\sum_{\qqq21, \qqq 22, \qqq23:trans} \big(\prod_{i=1}^3\tdff 2{i}{}(x_0)\big)^\frac23.
\end{align*}
 Using Lemma
\ref{scattered} again,  we have,  for $x\in \fQ^2$,  
\Be
\label{scale2}
\begin{aligned}
|\tdelg 11(x)\tdelg 12(x)|
&\le \sigma_2^{d} (\max_{\qq2}|\!\tdff{2}{}{}\!|(x))^2+
|\sum_{\substack{(\qqq21, \qqq 22)\in \Lambda_1\times \Lambda_2\,:\,
\qqq21,\qqq22\in \fN}}\tdelg 21(x)\tdelg 22(x)|
\\
&\qquad+
\sigma_2^{-C}\sum_{\qqq21, \qqq 22, \qqq23:trans} \big(\prod_{i=1}^3|\!\tdff 2{i}{}\!|(x)\big)^\frac23.
\end{aligned}
\Ee

Taking $L^{p/2}$ on the  both sides of inequality (integrating on each of $\fQ^2$), summing along $\fQ^2$, 
and using Proposition \ref{rescale} and Lemma \ref{vector},  we get
\Be\label{scale22}
\begin{aligned}
\|\tdelg 11\tdelg 12\|_{\frac p2}&\lesssim (A(\sigma_2^{-2}
\delta))^2\|f\|_p^2 + \Big(\sum_{\fQ^2}\Big\|
           \sum_{
                 \qqq21,\qqq22\subset 
[\fN](\fQ^2, \qqq11, \qqq12)
}\tdelg 21\tdelg
22\Big\|_{L^\frac p2 (\fQ^2)}^{\frac p2}\Big)^\frac 2p \\
& + \sigma_2^{-C}\sup_{\tau_1, \tau_2, \tau_3}
\max_{\qqq21,\qqq22,\qqq23: trans}\|\tdtfg 21 \tdtfg 22 \tdtfg
23\|_{\frac p3}^\frac23\,,
\end{aligned}
\Ee
where $[\fN](\fQ^2, \qqq11, \qqq12)$ is a subset of
$\fN(\fQ^2, \qqq11, \qqq12)$.
Here, for simplicity we now denote $\tau_{ l_i} f$ by $\tau_i f$ just 
to indicate translation by a vector. Precise value of $l_i$ is not significant in the
overall argument.  To show \eqref{scale22},  
for the first term in the right hand side of  \eqref{scale2} we may repeat the same argument 
as in \eqref{maxp}. In fact, by  \eqref{scatteredd} and  rapid decay of $A_l$\footnote{Note that the sequence is independent of $\fQ^2$.}  combined with H\"older's inequality to summation along $l, l'$, and using Proposition \ref{rescale} and Lemma \ref{vector} we have
\[ \| \max_{\qq2}|\!\tdff{2}{}{}\!|\|_p\lesssim  \sup_{\tau_2}  
\| \max_{\qq2}|\tdel  (\tau_2 f_{\!\qqq{2}{}})| \|_p \lesssim  A(\sigma_1^{-2}\delta) \|f\|_p.  \] 
For the third term of the right hand side of  \eqref{scale2},   thanks to  \eqref{scatteredd}  and rapid decay of $A_l$, it is enough to note that there are  as many as $O(\sigma_2^{-C})$ $\qqq21, \qqq 22, \qqq23: trans$. 

 We combine \eqref{scale22} with \eqref{decomp1} to get \Be\label{2scale-2}
\begin{aligned}
& \|\tdel f\|_p  \lesssim A(\sigma_1^{-2}\delta)\|f\|_p+ \sigma_1^{-C}
A(\sigma_2^{-2} \delta)\|f\|_p
\\
&+ \sigma_1^{-C}\sup_{\tau_1, \tau_2} \max_{\qqq11,\qqq12:
trans}\Big(\sum_{\fQ^2}\Big\| \sum_{\substack{\qqq21\subset
\qqq11,\,\qqq22\subset \qqq12\,:\, \qqq21,\qqq22\subset
[\fN](\fQ^2, \qqq11, \qqq12)}}\tdtaf 21\tdtaf
22\Big\|_{L^\frac p2 (\fQ^2)}^{\frac p2}\Big)^\frac 1p \\
&+
\sigma_1^{-C}\sigma_2^{-C}\sup_{\tau_1, \tau_2, \tau_3} \max_{\qqq21,\qqq22,\qqq23: trans}\|\tdtaf 21 \tdtaf
22 \tdtaf 23\|_{\frac p3}^\frac13.
\end{aligned}
\Ee Here $[\fN](\fQ^2, \qqq11, \qqq12)$ also depends on
$\tau_1,\tau_2$.     We keep decomposing the trilinear transversal part
in order to achieve higher level of multilinearity.    

\subsubsection{From $k$-transversal to $(k+1)$-transversal} Now we proceed inductively. Suppose that we are  given dyadic
cubes $\qqq {k-1} 1,\qqq {k-1}2,\dots, \qqq {k-1}{k}$ of $(k-1)$-th scale  which are
transversal:  
\Be \label{trans-k}
V\!ol(\mbn(\xi_1),\mbn(\xi_2), \dots, 
\mbn(\xi_{k}))\ge c\,\sigma_1\sigma_2\dots\sigma_{k-1} 
\Ee whenever
$\xi_i\in {\qqq {k-1}i}$, $i=1,\dots, k$. As before, we denote by
$\{\qqq k i\}$ the collection of dyadic cubes of sidelength
$2\sigma_k$ contained in $\qqq {k-1} i$ such that \Be\label{k-cube}
\bigcup_{\qqq ki} \qqq ki=\qqq{k-1}i,   \,\,\,i=1,\dots, k, \Ee and
we also denote by $ \{\fq^k\}$ the set $\bigcup_{i=1}^k
\{\qqq{k}i\}$.
 Hence,
\begin{equation}\label{summation} \prod_{i=1}^kT_\delta f_\qqq
{k-1}i = \prod_{i=1}^k \Big(\sum_{\qqq k{i}} T_\delta f_\qqq {k}i\Big)
=\sum_{\qqq k{1}, \dots,\qqq kk }\prod_{i=1}^k \Big( T_\delta f_\qqq {k}i\Big).\end{equation}

Fix  a dyadic cube $\fQ^k$ of sidelength $2M_i$  and let $x_0$ be the center of $\fQ^k$. For  $i=1,
\dots, k$, let us denote by $\qqq {k}{i\ast}\in \{\qqq k i\}$ such that \[\tdfg k {i\ast}{}(x_0)=\max_{\qqq k i}
\tdfg k i {}(x_0)\] and we set, for $i=1, \dots, k,$
\[\Lambda_i^k=\big\{\qqq ki:
\tdfg ki{}(x_0)\ge (\sigma_k)^{kd} \max_{i=1,\dots, k}\tdfg k
{i\ast}{}(x_0)\big\}.\]
 Then, it follows that
\Be
\label{min}
\sum_{(\qqq k1,\dots,\qqq kk)\not\in \prod_{i=1}^k \Lambda^k_i} \prod_{i=1}^k \tdfg ki{}(x_0)\le
\max\tdfg{k}{}{}(x_0).
\Ee

Let $\mbn_1,\dots, \mbn_{k}$ denote the normal vectors $\mbn(\mathbf
c({\qqq k{1\ast}})), \dots, \mbn(\mathbf c({\qqq k{k\ast}})) $,
respectively, and let $\Pi^k=\Pi^k(\fQ^k, \qqq
{k-1} 1,\qqq {k-1}2,\dots, \qqq {k-1}{k})$ be
the $k$-plane spanned by $\mbn_1,\dots, \mbn_{k}$.  Now, for a
sufficiently large constant $C>0$, we
 define
\Be\label{normalset}\fN=\fN(\fQ^k, \qqq
{k-1} 1,\qqq {k-1}2,\dots, \qqq {k-1}{k})= \{\qqq k{}:
\dist(\mbn(\qqq k{}), \Pi^k)\le C\sigma_k\}.
\Ee By
\eqref{trans-k} it follows that  if $\qqq k{i}\not\in \fN$, 
\eqref{trans-k+1} holds
whenever $\xi_1\in \qqq k{1\ast}, \dots, \xi_k\in \qqq k{k\ast} $
and $\xi_{k+1} \in \qqq ki$. Hence, $\qqq k{1\ast}, \dots, \qqq
k{k\ast}, \qqq ki$ are transversal.

We write \Be\label{summation2}\sum_{(\qqq k1,\dots,\qqq kk)\in
\prod_{i=1}^k \Lambda^k_i} \prod_{i=1}^k \tdelg {k}{i}{}=
\sum_{\substack{(\qqq k1,\dots,\qqq kk)\in \prod_{i=1}^k
\Lambda^k_i:\\ \qqq k1,\dots,\qqq k k\in \fN}} \prod_{i=1}^k \tdelg
{k}{i}{}+\sum_{\substack{(\qqq k1,\dots,\qqq kk)\in \prod_{i=1}^k
\Lambda^k_i:\\ \qqq ki\not\in \fN \text{ for some }i}}
 \prod_{i=1}^k \tdelg {k}{i}{}
.\Ee Consider a $k$-tuple $(\qqq k1,\dots,\qqq k k)$ which appears
in the second sum.  There is a $\qqq ki\not\in\fN$. By the same
manipulation as before, we get
 \[\prod_{i=1}^k \tdfg {k}{i}{}(x_0)\le
\sigma_k^{- \frac{d k^2}{k+1}} \prod_{i=1}^k \big(\tdfg
{k}{i\ast}{}(x_0)\big)^\frac k{k+1}\big(\tdfg {k}{i}{}(x_0)\big)^\frac k{k+1}.
\]
Since  $\qqq k{1\ast}, \dots, \qqq k{k\ast}, \qqq ki$ are
transversal, by  Lemma \ref{scattered}  we have,  for $x\in \fQ^k$,
\Be \label{obs}\begin{aligned}
 \Big|\sum_{\substack{(\qqq k1,\dots,\qqq kk)\in \prod_{i=1}^k \Lambda^k_i:\\ \qqq ki\not\in \fN \text{ for some }i}}
 \prod_{i=1}^k \tdelg {k}{i}{}(x)\Big|\lesssim \sigma_k^{-C}
 \sum_{\qqq k1,\dots,\qqq k {k+1}: trans} \prod_{i=1}^{k+1} \big(\tdfg {k}{i}{} (x_0)\big)^{\frac
k{k+1}}.
\end{aligned}
\Ee Combining \eqref{min} and \eqref{obs} with \eqref{summation} and
\eqref{summation2}, and applying Lemma \ref{scattered} yield, for
$x\in \fQ^k$,
\begin{align*}
\Big|\prod_{i=1}^k \tdelg {k-1}{i}(x)\Big|
\lesssim \big(\max_{\qqq k{}}|\!\tdfg{k}{}{}&\!|(x)\big)^k+
\sigma_k^{-C}\sum_{\qqq k1,\dots,\qqq k
{k+1}:\,trans} \prod_{i=1}^{k+1} \big(|\!\tdfg {k}{i}{}\!| (x)\big)^{\frac
k{k+1}}\\
&
+
\Big|\sum_{{
             \substack{
             \qqq k1,\dots,\qqq k k\in \\ [\fN](\fQ^k,\qqq
{k-1} 1,\qqq {k-1}2,\dots, \qqq {k-1}{k})}}}
\prod_{i=1}^k \tdelg {k}{i}(x)\Big|, 
\end{align*}
 where $[\fN](\fQ^k,\qqq {k-1} 1,\qqq {k-1}2,\dots, \qqq
{k-1}{k})$ is a subset of $\fN(\fQ^k, \qqq {k-1} 1,\qqq
{k-1}2,\dots, \qqq {k-1}{k})$.
After taking $p/k$-th power of both sides of inequality, we
integrate on $\mathbb R^d$, and use Lemma
\ref{rescale} and Lemma \ref{vector} along with \eqref{scatteredd}  to
get \Be\label{scale-k1}
\begin{aligned}
  \Big\|\prod_{i=1}^k \tdelg {k-1}{i}(x)&\Big\|_{L^\frac pk}^\frac1k \lesssim
  A(\sigma_k^{-2} \delta)\|f\|_p  + \sigma_k^{-C}\sup_{\tau_1,\dots,\tau_{k+1}}\,\,\max_{\substack{\qqq
k1,\dots,\qqq k {k+1}:trans}}\Big \| \prod_{i=1}^{k+1} \tdeltg
{k}{i}\Big\|_{L^\frac p{k+1}}^{\frac 1{k+1}}
 \\
 &+
\Big(\sum_{\fQ^k}\Big\|\sum_{
             \substack{
             \qqq k1,\dots,\qqq k k\in \\ [\fN](\fQ^k,\qqq
{k-1} 1,\qqq {k-1}2,\dots, \qqq {k-1}{k})}} \prod_{i=1}^k \tdel
f_{\qqq {k}{i}}\Big\|^\frac pk_{L^\frac pk(\fQ^k)}\Big)^\frac 1p\,. 
\\
&
\end{aligned}
\Ee

\subsubsection{Multi-scale decomposition} For $k=2,\dots, d-1$, let us set
\[ \fM^{k}\!f= \sup_{\tau_1,\dots,\tau_k}\, \max_{\qqq {k-1}1, \dots, \qqq {k-1}k: trans}
\Big(\sum_{\fQ^k}\Big\|\sum_{\substack{\qqq{k}i\subset
\qqq{k-1}i:\\\qqq k1,\dots,\qqq k k\in [\fN](\fQ^k)}}
\prod_{i=1}^k \tdel (\tau_i f_{\qqq {k}{i}})\Big\|^\frac pk_{L^\frac
pk(\fQ^k)}\Big)^\frac 1p.\] Here  $[\fN](\fQ^k)$ depends on 
$\tau_1,\dots,\tau_k$, and  $\qqq {k-1}1, \dots, \qqq {k-1}k$, but  $\mathbf n(\fq^k)$, $\fq^k \in  [\fN](\fQ^k)$ is contained in a $k$-plan.
Starting from \eqref{2scale-2} we iteratively apply \eqref{scale-k1}
to the transversal products to get \Be\label{scale-22}
\begin{aligned}
&\|\tdel f\|_p \lesssim
\sum_{k=1}^m \sigma_{k-1}^{-C} A(\sigma_k^{-2}\delta)\|f\|_p 
+ \sum_{k=2}^m\sigma_{k-1}^{-C} \fM^k f\\
&\qquad\qquad\qquad\qquad\qquad + \sigma_l^{-C}\sup_{\tau_1, \dots, \tau_{m+1}}
\max_{\qqq{m}1,\dots\qqq m{m+1}: trans} \Big \| \prod_{i=1}^{m+1}
\tdel \tau_i f_{\qqq {m}{i}}\Big\|_{L^\frac p{m+1}}^{\frac 1{m+1}}.
\end{aligned}
\Ee

\subsection{Proof of Proposition \ref{localfrequency}}
\label{closinginduction}  For given  $\beta>0$, we need to show that $\cA^\beta(s)\le C$ for
$0<s\le 1$ if $p\ge p_\circ(d)$.  Let $\epsilon>0$ be small enough such that $(100d)^{-1}\beta\ge \epsilon$, 
\label{proof1}  and choose small $\epsilon_\circ>0$
and $N=N(\epsilon)$ large enough such that Proposition \ref{multilp}
and Corollary \ref{cor-confined} hold uniformly for $\psi \in
\fge$.

Let $0< s< \delta\le 1$, and let  $\sigma_1, \dots,
\sigma_m$ be dyadic numbers satisfying \eqref{dyadics}. Since
$A(\delta) \le C$ for $\delta\gtrsim 1$ and $s\le
\sigma_k^{-2}\delta$, we see
\Be \label{est1a}
\begin{aligned}
 A(\sigma_k^{-2}\delta) &\le
A(\sigma_k^{-2}\delta)\chi_{(0, 10^{-2}]}(\sigma_k^{-2}\delta)+C \le
(\sigma_k^{-2} \delta)^{-\frac{d-1}2+\frac dp-\beta}
\cA^\beta(s)+C .
\end{aligned}
\Ee
By Proposition \ref{multilp} and  Lemma \ref{vector}  we have, for
$p\ge{2(m+1)}/m$,  \Be\label{l+1} \sup_{\tau_1, \dots,
\tau_{m+1}} \max_{\qqq{m}1,\dots\qqq m{m+1}: trans} \Big \|
\prod_{i=1}^{m+1} \tdel \,\tau_i f_{\qqq {m}{i}}\Big\|_{L^\frac
p{m+1}}^\frac1{m+1} \lesssim
(\sigma_1\cdots\sigma_{m})^{-C_\epsilon}\delta^{-\epsilon}
\delta^{\frac dp-\frac{d-1}2}\|f\|_p\,,\Ee which uniformly holds for
$\psi \in \fge$.

We have two types of estimate for $\fM^k f$. Since $\qqq {k-1}1,
\dots, \qqq {k-1}k$ are already transversal,
\[\Big|\sum_{\substack{\qqq{k}i\subset \qqq{k-1}i:\\\qqq
k1,\dots,\qqq k k\subset [\fN](\fQ^k)}} \prod_{i=1}^k \tdel
(\tau_i f_{\qqq {k}{i}})\Big| \le \sum_{\substack{\qqq k1,\dots,\qqq
k k: trans}} \Big|\prod_{i=1}^k \tdel (\tau_i f_{\qqq
{k}{i}})\Big|.\] Here we  slightly   abuse the definition `$trans$' and $\qqq
k1,\dots,\qqq k k: trans$ means that \eqref{trans-k} holds if
$\xi_i\in \qqq ki$, $i=1,\dots, k$. Since there are as many as
$O(\sigma_k^{-C})$  $(\qqq {k}1, \dots, \qqq {k}k)$ and the above inequality holds regardless of $\fQ^k$, we get
\[\fM^{k}\!f\lesssim  \sigma_k^{-C}
\sup_{\tau_1,\dots,\tau_k} \max_{\qqq {k}1, \dots, \qqq {k}k: trans} \Big\|\prod_{i=1}^k \tdel (\tau_i f_{\qqq
{k}{i}})\Big\|_{\frac pk}^\frac1k.\]
Since $\qqq {k}1, \dots, \qqq {k}k$ are transversal, by Proposition
\ref{multilp} (also see Remark \ref{bound}) and Lemma \ref{vector},  we get, for $p\ge
\frac{2k}{k-1}$,
\[ \Big\|\prod_{i=1}^k \tdel (\tau_i f_{\qqq
{k}{i}})\Big\|_{\frac pk}^\frac1k \lesssim
(\sigma_1\cdots\sigma_{k-1})^{-C_\epsilon} \delta^{\frac
dp-\frac{d-1}2-\epsilon} \prod_{i=1}^k\|\tau_i f_{\qqq
ki}\|_{p}^\frac1k\lesssim \sigma_k^{-C_\epsilon} \delta^{\frac
dp-\frac{d-1}2-\epsilon} \|f\|_{p}\,.\] 
Hence, for $p\ge
\frac{2k}{k-1}$, we have
the  uniform estimate  for  $\psi\in\fge$
\Be\label{klinear}\fM^k\!f\lesssim  \sigma_k^{-C} \delta^{\frac
dp-\frac{d-1}2-\epsilon}\|f\|_{p}. \Ee

On the other hand, fixing $\tau_1, \dots, \tau_k$, $\qqq {k-1}1, \dots, \qqq {k-1}k: trans$, and $\fQ^k$,  we consider the integrals appearing in the definition of    $\fM^k\!f$. 
Let
us write $\fQ^k=\fq(z,1/\sigma_k)$. Using  Corollary \ref{cor-confined},  for $2\le p\le 2k/(k-1),$
we have \Be\label{est4}
\begin{aligned}
\Big\|\!\!\!\sum_{\substack{\qqq{k}i\subset \qqq{k-1}i:\\\qqq
k1,\dots,\qqq k k\in [\fN](\fQ^k)}} \prod_{i=1}^k \tdel
(\tau_i f_{\qqq {k}{i}})\Big\|_{L^\frac pk(\fQ^k)}
\!\!\lesssim
\sigma_{k-1}^{-C_\epsilon}\sigma_{k}^{-\epsilon}
\prod_{i=1}^k \Big\|\Big(\!\!\!\sum_{\qqq {k}{i}\in
[\fN](\fQ^{k})} |\tdel (\tau_i f_{\qqq k{i}})|^2
\Big)^\frac12\rho_{B(z,\frac C{\sigma_k})}\Big\|_p.
\end{aligned}
\Ee
Since
$[\fN](\fQ^k)\subset \fN(\fQ^k,\qqq {k-1}1,$ $\dots,$ $\qqq
{k-1}k)$, it is clear that if $\qqq ki\in [\fN](\fQ^k)$, 
$\qqq ki\subset \mathrm N^{-1}(\Pi)+O(\sigma_k) $ for a $k$-plane
$\Pi$.
Since $\qqq {k-1}1, \dots, \qqq {k-1}k: trans$ and $\qqq ki\subset \qqq{k-1}i$, $i=1,\dots, k$,
 $\sum_{\qqq k1\in \fN(\fQ^k)}\tdel (\tau_1 f_{\qqq
{k}{1}}), \dots,$ $\sum_{\qqq kk\in \fN(\fQ^k)}\tdel (\tau_k f_{\qqq
{k}{k}})$ satisfy the assumptions of Corollary \ref{cor-confined}
(Proposition \ref{confined})  with $\delta=\sigma_k$ and
$\sigma=\sigma_1\cdots\sigma_{k-1}$. Hence, Corollary
\ref{cor-confined} gives \eqref{est4}.

Recalling that $\fq^k_i$ are contained in $C\sigma_k$-neighborhood
of $\Gamma(\psi)$, we see that  $\# \fN(\fQ^{k})$ is
$\lesssim \sigma_{k}^{1-k}$. So, by H\"older's inequality we have 
\[
\Big\|\!\!\!\sum_{\substack{\qqq{k}i\subset \qqq{k-1}i:\\\qqq
k1,\dots,\qqq k k\subset \fN(\fQ^k)}} 
          \prod_{i=1}^k \tdel (\tau_i f_{\qqq {k}{i}})\Big\|^\frac 1k_{L^\frac pk(\fQ^k)}
    \!\lesssim
                      \sigma_{k-1}^{-C}\sigma_{k}^{-\epsilon-p(k-1)(\frac12-\frac1p)}
                       \max_{1\le i\le k}\Big\|\Big(\sum_{\qqq {k}{}} |\tdel (\tau_i f_{\qqq
                          k{}})|^p\Big)^\frac1p\rho_{B(z,\frac C{\sigma_k})} \Big\|_p.
\]
Here we bound $\sigma_1, \dots, \sigma_{k-1}$ with $\sigma_{k-1}$
using \eqref{dyadics} and replace $C_\epsilon$ with a larger
constant $C$, since $\epsilon$ is fixed. By using rapid
decay of $\rho$ we sum the estimates along $\fQ^k$ to get
\Be\label{est6} \fM^k f\lesssim
\sigma_{k-1}^{-C}\sigma_{k}^{-\epsilon-(k-1)(\frac12-\frac1p)}
\sup_{h} \Big\|\Big(\sum_{\qqq {k}{}} |\tdel
(\tau_h f_{\qqq k{}})|^p\Big)^\frac1p \Big\|_p. \Ee
By Proposition \ref{rescale}, Lemma \ref{vector}, and \eqref{est1a} we get, for $2\le p\le 2k/(k-1)$,
\[
\begin{aligned}
\fM^k f 
\lesssim
(\sigma_{k-1}^{-C}\sigma_k^{\beta+\frac{2d-k-1}{2}-\frac{2d-k+1}p}
\delta^{-\frac{d-1}2+\frac dp-\beta}
\cA^\beta(s)+ \sigma_{k}^{-C})\|f\|_{p}.
\end{aligned}
\]
Here we also use $(100d)^{-1}\beta\ge \epsilon$. 
So, if $p\ge \frac{2(2d-k+1)}{2d-k-1}$, 
$\fM^k f \lesssim  ( \sigma_{k-1}^{-C}\sigma_k^{\alpha}
\delta^{-\frac{d-1}2+\frac dp-\beta}
\cA^\beta(s)+ \sigma_{k}^{-C})\|f\|_{p}$
for some $\alpha>0$. Combining this with \eqref{klinear}, we have for
some $\alpha>0$
\[
\begin{aligned}
\fM^k f &\lesssim \Big(\sigma_k^{-C}\delta^{-\frac{d-1}2+\frac
dp-\epsilon}
+\sigma_{k-1}^{-C}\sigma_{k}^{\alpha}\delta^{-\frac{d-1}2+\frac
dp-\beta} \cA^\beta(s)
+\sigma_{k}^{-C}\Big) \|f\|_p
\end{aligned}
\]
provided that $p\ge\min (\frac{2(2d-k+1)}{2d-k-1},
\frac{2k}{k-1})$.

  Since $(100d)^{-1}\beta\ge \epsilon$ and $p_\circ>\frac {2d}{d-1}$, from \eqref{est1a}  we note that   $A(\sigma_k^{-2}\delta)\lesssim  \sigma_{k}^{\alpha} \delta^{-\frac{d-1}2+\frac dp-\beta}\cA^\beta(s)$. Thus, by  \eqref{scale-22},  the above inequality,
\eqref{est1a}, and \eqref{l+1} we obtain
\begin{align}
\label{est7}
\|\tdel f\|_p&\lesssim \sum_{k=1}^{m}\Big( 
\sigma_{k-1}^{-C} \sigma_{k}^{\alpha}\cA^\beta(s)
+\sigma_k^{-C}  \Big)\delta^{-\frac{d-1}2+\frac dp-\beta}
\|f\|_p+ \sigma_{m}^{-C} \delta^{-\frac{d-1}2+\frac dp-\beta}
\|f\|_p
\end{align}
for some $\alpha>0$ provided that
\Be \label{prange} p\ge  \min \Big(\frac{2(2d-k+1)}{2d-k-1},
\frac{2k}{k-1}\Big), \ k= 2, \dots, m,\, \& \  \  p  \ge
\frac{2(m+1)}{m}.\Ee

Since the estimates \eqref{l+1}--\eqref{est6} hold uniformly for $\psi\in
\fge$, so does \eqref{est7}.  Taking sup along $\psi $
and $f$,  we have
\begin{align*}
A(\delta)\le & \Big(\sum_{k=1}^{m} C\sigma_{k-1}^{-C}
\sigma_{k}^{\alpha}\cA^\beta(s) + C\sigma_{m}^{-C}\Big)
\delta^{-\frac{d-1}2+\frac dp-\beta}.
\end{align*}
By multiplying $\delta^{\frac{d-1}2-\frac dp-\beta}$ to both
sides, $\delta^{\frac{d-1}2-\frac dp+\beta}A(\delta)
 \le \sum_{k=1}^{m} C\sigma_{k-1}^{-C}
\sigma_{k}^{\alpha}\cA^\beta(s) + C\sigma_{m}^{-C}.$ This
is valid as long as $s<\delta\le 1$. Hence, taking sup for
$s<\delta\le 1$ yields
\[\cA^\beta(s)
 \le \sum_{k=1}^{m} C\sigma_{k-1}^{-C}
\sigma_{k}^{\alpha}\cA^\beta(s) + C\sigma_{m}^{-C}\] if
\eqref{prange} is satisfied. Therefore, choosing $\sigma_1\ll
\dots\ll \sigma_{m}$, successively,  we can make $\sum_{k=1}^{m}
C\sigma_{k-1}^{-C} \sigma_{k}^{\alpha}\le \frac12.$ This gives the desired 
$\cA^\beta(s)\le C\sigma_m^{-C}$ provided  that
\eqref{prange} holds.

Finally,  we only need to check that the minimum of
\[\mathcal P(m)= \max\Big(\frac{2(m+1)}{m}, \max_{k=2,\dots,m}
\min \Big(\frac{2(2d-k+1)}{2d-k-1}, \frac{2k}{k-1}\Big)\Big),\,\,\,
2\le m\le d-1\] is $p_\circ(d)$ as can  be done by routine
computation. This completes proof. \qed

\begin{rem} The minimum of  $\mathcal P$ is achieved when  $m$ is near ${2d}/3$.   
So, it  doesn't seem that the argument makes use of the full strength of the
multilinear restriction estimates.
\end{rem}

\section{Square function estimates}
In this section we prove Theorem \ref{mainsquare}. We firstly obtain
multi-(sub)linear square function estimates which are vector valued extensions of  multilinear restriction
estimates. Then, we  modify  the argument in Section \ref{proof1} to
obtain the sharp square function estimate from these multilinear
estimates. Although basic strategy here is similar to the one in the
previous section, due to the additional integration in $t$ we need to
handle a family of surfaces.
This argument in this section is very much in parallel with that of the previous section.

\subsection{One parameter family of elliptic functions} As before, 
for
$0<\epsilon_\circ\ll 1/2$ and an integer  $N\ge 100d$, we  denote by $\fgee$ the class of smooth functions
defined on $I^{d-1}\times I$ which satisfy
 the following:
\begin{align}
\label{ell1}&\| \psi-\psi_\circ-t\|_{C^N(I^{d-1}\times I)} \le
\epsilon_\circ.
\end{align}
This clearly implies that, for all $(x,t)\in I^{d-1}\times I$, 
\begin{align}
\label{ell2} \partial_t\psi(x,t) \in [1-\epsilon_\circ, 1+\epsilon_\circ].
\end{align}
For $\psi\in \fgee$ and $z_0=(\zeta_0,t_0)\in \frac12 I^d$,  define
\begin{align*}
\psi_{z_0}^\epsilon(\zeta,t)=&\epsilon^{-2}
\Big(\psi\big(\zeta_0+\epsilon\,\mathcal H_{z_0}^\psi\zeta,\,
 t_0+\frac{\epsilon^2t}{\partial_t\psi(z_0)}\big)
- \psi(z_0)- \epsilon\,\nabla_\zeta\psi(z_0)\mathcal
H_{z_0}^\psi\zeta\Big),
\end{align*}
where $\mathcal H_{z_0}^\psi
=(\sqrt{H(\psi(\cdot,t_0))(\zet_0)}\,\,)^{-1}.$ Then we have the
following.

\begin{lem}\label{classsquare} Let $z_0\in \frac12 I^d$
and $\psi\in \fgee$. There  is a $\kappa=\kappa(\epsilon_\circ,
N)>0$, independent of $\psi, \zeta_0,\, t_0$,
 such that $\psi_{z_0}^\epsilon$
is contained in $\fgee$ if $0<\epsilon \le \kappa$.
\end{lem}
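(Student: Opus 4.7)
The plan is to imitate the proof of Lemma \ref{normal}, adapted to the one-parameter family $\fgee$, and show that $\|\psi_{z_0}^\epsilon-\psi_\circ-t\|_{C^N(I^{d-1}\times I)}\le C\epsilon$ with a constant $C=C(\epsilon_\circ,N)$ uniform in $\psi\in\fgee$ and $z_0\in\frac12 I^d$; then $\kappa:=\epsilon_\circ/C$ will do. First I will check that $\psi_{z_0}^\epsilon$ is well-defined and smooth on $I^{d-1}\times I$. By \eqref{ell1} applied to the pure $\zeta$-Hessian at $(\zeta_0,t_0)$, the matrix $H(\psi(\cdot,t_0))(\zeta_0)$ differs from the identity by $O(\epsilon_\circ)$, so its symmetric positive-definite square root is well-defined via functional calculus, and the inverse $\mathcal H_{z_0}^\psi$ is close to the identity. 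By \eqref{ell2}, $\partial_t\psi(z_0)\in[1-\epsilon_\circ,1+\epsilon_\circ]$. Hence for $\epsilon$ small enough (depending only on $d$ and $\epsilon_\circ$), the affine map
\[
\Phi(\zeta,t):=\Big(\zeta_0+\epsilon\,\mathcal H_{z_0}^\psi\zeta,\;t_0+\frac{\epsilon^2 t}{\partial_t\psi(z_0)}\Big)
\]
sends $I^{d-1}\times I$ into $I^d$.

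The next step is to bound derivatives of high order by scaling. By the chain rule, for a multi-index $\alpha$ in $\zeta$ and an integer $\beta\ge 0$ in $t$, $\partial_\zeta^\alpha\partial_t^\beta(\psi\circ\Phi)(\zeta,t)$ is a finite linear combination of terms of the form $\epsilon^{|\alpha|+2\beta}$ times products of entries of $\mathcal H_{z_0}^\psi$ and powers of $1/\partial_t\psi(z_0)$, multiplied by a partial derivative of $\psi$ of total order $\le|\alpha|+\beta$ evaluated at $\Phi(\zeta,t)$. The matrix factors are bounded in terms of $\epsilon_\circ$, and the derivatives of $\psi$ are bounded by $\|\psi_\circ+t\|_{C^N}+\epsilon_\circ$. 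Since the subtracted terms $\psi(z_0)+\epsilon\,\nabla_\zeta\psi(z_0)\cdot\mathcal H_{z_0}^\psi\zeta$ contribute only when $(|\alpha|,\beta)\in\{(0,0),(1,0)\}$, dividing by $\epsilon^2$ gives $|\partial_\zeta^\alpha\partial_t^\beta\psi_{z_0}^\epsilon|\le C\epsilon^{|\alpha|+2\beta-2}\le C\epsilon$ whenever $|\alpha|+2\beta\ge 3$, while the corresponding derivative of $\psi_\circ+t$ vanishes.

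For the remaining low-order cases $|\alpha|+2\beta\le 2$, I will use Taylor's theorem around $z_0$ at order three. The expansion reads
\[
(\psi\circ\Phi)(\zeta,t)=\psi(z_0)+\epsilon\,\nabla_\zeta\psi(z_0)\cdot\mathcal H_{z_0}^\psi\zeta+\epsilon^2 t+\frac{\epsilon^2}{2}\zeta^{T}(\mathcal H_{z_0}^\psi)^T H_\zeta\psi(z_0)\,\mathcal H_{z_0}^\psi\zeta+R(\zeta,t),
\]
where the mixed $\zeta$-$t$ term and the pure $\partial_t^2\psi$ term are absorbed into $R$, and $R=O(\epsilon^3)$ in $C^2(I^{d-1}\times I)$ uniformly for $\psi\in\fgee$ (since $\|\psi\|_{C^N}$ is under control). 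The definition of $\psi_{z_0}^\epsilon$ is designed precisely so that subtracting $\psi(z_0)+\epsilon\,\nabla_\zeta\psi(z_0)\cdot\mathcal H_{z_0}^\psi\zeta$ kills the zeroth- and first-order $\zeta$-terms, the factor $1/\partial_t\psi(z_0)$ reduces the $t$-linear contribution to exactly $\epsilon^2 t$, and the identity $(\mathcal H_{z_0}^\psi)^T H_\zeta\psi(z_0)\mathcal H_{z_0}^\psi=I$ collapses the $\zeta$-quadratic form to $|\zeta|^2/2$. Dividing by $\epsilon^2$ therefore gives $\psi_{z_0}^\epsilon(\zeta,t)=\psi_\circ(\zeta)+t+\epsilon^{-2}R(\zeta,t)$, and the $C^2$-bound on $R$ translates into an $O(\epsilon)$ bound for all derivatives of the error of order $\le 2$.

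Combining the two regimes yields $\|\psi_{z_0}^\epsilon-\psi_\circ-t\|_{C^N(I^{d-1}\times I)}\le C\epsilon$ with $C=C(\epsilon_\circ,N)$, so taking $\kappa:=\epsilon_\circ/C$ completes the proof. The only real obstacle is careful bookkeeping in the chain rule and verifying that each ingredient of the definition of $\psi_{z_0}^\epsilon$, namely the symmetric square root $\mathcal H_{z_0}^\psi$, the time rescaling by $1/\partial_t\psi(z_0)$, and the linear-in-$\zeta$ subtraction, does its intended job of matching $\psi_\circ+t$ to second order at the origin. No analytic ingredient is required beyond what already appears in the proof of Lemma \ref{normal}.
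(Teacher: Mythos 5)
Your proof is correct and follows essentially the same strategy as the paper's: dispense with derivatives of order $|\alpha|+2\beta\ge 3$ by the scaling factor $\epsilon^{|\alpha|+2\beta-2}$, and match the remaining low-order derivatives to $\psi_\circ+t$ via a Taylor expansion at $z_0$. The only presentational difference is that you do a single second-order Taylor expansion in both variables at once, whereas the paper first expands in $t$ with integral remainder to reduce the $\beta=0$ case to the one-variable Lemma~\ref{normal} and treats $\beta=1$ by a separate direct estimate on $\partial_t\psi$; the substance is the same.
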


\begin{proof}
It is sufficient to show that $|\partial_\zeta^\alpha
\partial_t^\beta (\psi_{z_0}^\epsilon(\zeta,t)-\psi_\circ(\zeta)-t)|\le
C\epsilon$, with $C$ independent of $\psi\in \fgee$, if
$|\alpha|+\beta\le N$ and $(\zeta,t)\in I^d$.

Let $0<\epsilon\le 1/4$. If $(\zeta,t)\in I^d$ and
$|\alpha|+2\beta>2$, trivially $|\partial_\zeta^\alpha
\partial_t^\beta (\psi_{z_0}^\epsilon(\zeta,t)-\psi_\circ(\zeta,t)-t)|\le
C\epsilon$ because $z_0=(\zeta_0,t_0)\in \frac12 I^d$. Thus, it is
sufficient to consider the cases $\beta=1, |\alpha|=0$; $\beta=0$,
$0\le |\alpha|\le 2$.
The first case is easy to handle. Indeed, from Taylor's theorem and  \eqref{ell2} 
 $\partial_t (\psi_{z_0}^\epsilon(\zeta,t)-\psi_\circ-t)=
(\partial_t\psi(z_0))^{-1}\big(\partial_t
\psi(\zeta_0+\epsilon\,\mathcal H_{z_0}^\psi\zeta,\,
 t_0+\frac{\epsilon^2t}{\partial_t\psi(z_0)})-\partial_t\psi(z_0)\big)=O(\epsilon).$
.

To handle the second case,  we consider  Taylor's expansion of
$\psi$ in $t$ with integral remainder:
\[\psi(\zeta,t)=\psi(\zeta,t_0)+\partial_t \psi(\zeta,t_0)(t-t_0) +  R_1(\zeta, t), 
\]
where  $R_1(\zeta, t)=(t-t_0)^2\int_{0}^{1}  (1-s) \partial_t^2\psi(\zeta, (t-t_0)s+t_0) ds.$  
The change of variables $t\to t_0+\epsilon^2
(\partial_t\psi(z_0))^{-1} t$, $\zeta\to \zeta_0+\epsilon\,\mathcal
H_{z_0}^\psi\zeta$ gives
\begin{align*}
\psi\Big(\zeta_0+\epsilon\,\mathcal H_{z_0}^\psi\zeta,\,
 t_0+\frac{\epsilon^2t}{\partial_t\psi(z_0)}\Big)
&= \epsilon^2\psi(\cdot,t_0)_{\zeta_0}^\epsilon (\zeta)+ \psi(z_0)+
 \epsilon\nabla_\zeta\psi(z_0)\,\mathcal H_{z_0}^\psi\zeta
 \\
&\qquad+  \frac{\epsilon^2\partial_t \psi\big(\zeta_0+\epsilon\,\mathcal H_{z_0}^\psi\zeta,t_0\big)}{\partial_t\psi(z_0)} t
+  \widetilde R(\zeta, t) 
\end{align*}
where $\psi(\cdot,t_0)_{\zeta_0}^\epsilon$ is defined by
\eqref{normalization} and  $\widetilde R(\zeta, t)=R_1(\zeta_0+\epsilon\,\mathcal
H_{z_0}^\psi\zeta, t_0+\epsilon^2
(\partial_t\psi(z_0))^{-1} t)$.
Hence, it follows that
\begin{align*}
\psi_{z_0}^\epsilon -\psi_\circ -t
=\psi(\cdot,t_0)_{\zeta_0}^\epsilon (\zeta)-\psi_\circ
+\frac{\partial_t
\psi(\zeta_0+\epsilon\,\mathcal H_{z_0}^\psi\zeta,t_0)-
\partial_t\psi(z_0)}{\partial_t\psi(z_0)} t+ \epsilon^{-2} \widetilde R(\zeta, t). 
\end{align*}
 Since $\psi(\cdot,t_0)-t_0\in \fge$ and
$(\psi(\cdot,t_0)-t_0)_{\zeta_0}^\epsilon=\psi(\cdot,t_0)_{\zeta_0}^\epsilon$,
$|\partial_\zeta^\alpha(\psi(\cdot,t_0)_{\zeta_0}^\epsilon-\psi_\circ)|\le
C\epsilon$ on $I^d$ for $|\alpha|=0,1,2$ (similarly to the proof of  Lemma
\ref{normal}). By \eqref{ell2} and mean value theorem   we also have
$({\partial_t\psi(z_0)})^{-1}$ $\partial_\zeta^\alpha({\partial_t
\psi(\zeta_0+\epsilon\,\mathcal H_{z_0}^\psi\zeta,t_0)-
\partial_t\psi(z_0)})t=O(\epsilon)$ in $C^N(I^{d-1})$  for $|\alpha|=0,1,2$.  Note that 
\[  \epsilon^{-2}\widetilde R(\zeta, t)=  \frac{\epsilon^2 t^2}{(\partial_t\psi(z_0))^2} \int_0^{1} (1-s) 
\partial_t^2\psi\big(\zeta_0+\epsilon\,\mathcal
H_{z_0}^\psi\zeta, \epsilon^2
(\partial_t\psi(z_0))^{-1} t s+t_0\big)   ds\,.\] 
Thus, again by  \eqref{ell2}   it is easy to see that 
$\partial_\zeta^\alpha ( \epsilon^{-2}\widetilde R)=O(\epsilon^{2+|\alpha|})$ for any $\alpha$. Therefore, combining the all together  we have  $|\partial_\zeta^\alpha(\psi_{z_0}^\epsilon(\cdot,
t)-\psi_\circ-t)|\le C\epsilon$ on $I^{d-1}$ for $|\alpha|=0,1,2$.
\end{proof}

\subsection{Square function with localized frequency}
Abusing the conventional notation we denote by $m(D)f$ the
multiplier operator given by $\widehat{m(D)f}(\xi)$ $=m(\xi)
\widehat f(\xi)$, and we also write $D=(D',D_d)$ where $D'$, $D_d$
correspond to the frequency variables $\zeta,$ $\tau$, respectively.

In order to show \eqref{square}, by Littlewood-Paley decomposition,
scaling, and further finite decompositions, it is sufficient to show
\[\Big\|\Big(\int_{1-\eps^2}^{1+\eps^2} \Big|\frac{\partial}{\partial t} \cR^\alpha_t f(x)
\Big|^2 dt \Big)^{1/2}\Big\|_p\le C \|f\|_p\] for some small $\eps>0$. And
by decomposing $\widehat f$ which may now be assumed to be supported in $S^{d-1}+O(\varepsilon^2)$ and rotation  we may assume $\widehat f$ is
supported in $B(-e_d,c\eps^2)$ with some $c>0$. Hence, by discarding harmless smooth
multiplier the matter reduces to showing
\[
  \big\|\|(D_d+\sqrt{t^2-|D'|^2}\,\,)_+^{\alpha-1} f\|_{L^2_t(1-\eps^2, 1+\eps^2)}\big\|_p\le C\|f\|_p.
  \]
  By changing variables in frequency domain, $D_d\to D_d+1$,
$(D',D_d)\to (\eps D', \eps^2 D_d)$ and
 $t\to \eps^2t+1$, this is equivalent to
\Be
\label{modified-square} 
\big\|\|(D_d-\psi_{br}(D',t)))_+^{\alpha-1}
\chi_\circ(D) f\|_{L^2_t(I)}\big\|_p\le C\|f\|_p
\Ee where
$\psi_{br}(\zeta,
t)=\eps^{-2}(1-\sqrt{1+2\eps^2t+\eps^4t^2-\eps^2|\zeta|^2})$ and $
\chi_\circ$ is a smooth function supported in a small neighborhood
of the origin. Clearly, $\psi_{br}$ satisfies \eqref{ell1} with
$\epsilon_\circ=C\eps^2$ for some $C>0$.  Consequently, we are led to consider general
$\psi\in\fgee$ rather than  the specific  $\psi_{br}$.

Let us define  the class  $\cE(N)$ of smooth functions 
by setting
\[\cE(N)=\big\{\eta\in C^\infty(I^d\times I): \|\eta\|_{C^N(I^d\times I)}\le 1,
 1/2\le \eta\le 1 \big\}.\]
Let $\psi\in \fgee$ and $\eta\in \cE(N)$. For $0<\delta$ and $f$
with $\widehat f$ supported in $\frac12 I^d$, we define
$S_\delta=S_\delta ({\psi},\eta)$  by \Be\label{srdef} S_\delta
f(x)=\Big\|\phi\Big(\frac{\eta(D,t)(D_d-\psi(D',t))}{\delta}\Big)f\Lpi
2. \Ee
Compared to $\psi$, the role of $\eta$ is less significant but this
enables us to handle more general square functions (in
particular, see {\it Remark \ref{spherical})}. By dyadic
decomposition away from the singularity \eqref{modified-square}  is
reduced to obtaining the sharp bound
 \Be \label{square1}
\|S_\delta f\|_p\le C \delta^{\frac dp-\frac{d-2}{2}-\epsilon}
\|f\|_p, \,\,\epsilon>0, \Ee when $\widehat f\,$ is supported in a
small neighborhood of the origin. This is currently verified for
$p\ge \frac{2(d+2)}{d}$  (\cite{lrs}) by making use of bilinear
restriction estimate for the elliptic surfaces. The following is our
main result concerning the  estimate \eqref{square1}.

\begin{prop}\label{localf2} Let $p_s=p_s(d)$ be given by
\eqref{ps} and $\supp\,
\widehat f\subset \frac12 I^d$. If $p\ge \min ( p_s(d), \frac{2(d+2)}d)$ and $\epsilon_\circ$ is sufficiently small,
for $\epsilon>0$ there is an
$N=N(\epsilon)$ such that \eqref{square1} holds uniformly for
 $\psi\in\fgee$, $\eta\in \cE(N)$.
\end{prop}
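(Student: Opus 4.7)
The plan is to transplant the frequency-side induction of Section 2 into the vector-valued $L^p$-$L^p(L^2_t)$ setting, following step by step the architecture of the proof of Proposition \ref{localfrequency}. First I introduce
\[
B(\delta)=\sup\{\|S_\delta(\psi,\eta) f\|_p:\psi\in\fgee,\ \eta\in\cE(N),\ \supp\widehat f\subset \tfrac12 I^d,\ \|f\|_p\le1\},
\]
and its monotone majorant $\mathcal B^\beta(\delta)=\sup_{\delta<s\le1}s^{(d-2)/2-d/p+\beta}B(s)$, and aim to show $\mathcal B^\beta\le C$, which is equivalent to \eqref{square1}. The class $\fgee$ is built precisely so that the one-parameter family of surfaces survives the parabolic rescaling: combining the affine frequency change $L$ from Proposition \ref{rescale} with the time rescaling $t\mapsto t_0+\epsilon^2 t/\partial_t\psi(z_0)$ converts $\psi\in\fgee$ into $\psi_{z_0}^\epsilon\in\fgee$ by Lemma \ref{classsquare}. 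This produces the square-function analogue of Proposition \ref{rescale}: if $\widehat f$ is supported in $\fq(z_0,\epsilon)\subset\tfrac12 I^d$, then $\|S_\delta f\|_p\lesssim\epsilon\, B(\epsilon^{-2}\delta)\|f\|_p$, the extra factor of $\epsilon$ arising from the Jacobian of the time rescaling under the $L^2_t$ norm.

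Next I invoke the vector-valued multilinear estimate (Proposition \ref{multisq}) and its confined counterpart (Proposition \ref{confinedsqr})---the vector-valued extensions of Proposition \ref{multilp} and Corollary \ref{cor-confined} obtained earlier in this section---which give sharp $L^{p/k}$ bounds for $k$-transversal products $\prod_{i=1}^k S_\delta f_{\fq^k_i}$, uniformly over $\psi\in\fgee$ and $\eta\in\cE(N)$. The $L^2_t$ average effectively shifts the admissible exponent thresholds upward compared with the scalar multilinear estimates, and this shift is exactly the mechanism producing $p_s(d)$ in place of $p_\circ(d)$.

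With these two pieces in hand, I run the multi-scale decomposition of Section \ref{multi-scale} essentially verbatim. The scattered-modulation-sum brackets of Lemma \ref{scattered} transfer values of $S_\delta f_{\fq^i}$ on spatial $M_i$-cubes to their centers, the bilinear-to-$(m+1)$-linear dichotomy extracts transversal products from the off-diagonal sums, and Lemma \ref{vector} absorbs the sum over lower-scale cubes. The resulting recurrence
\[
\mathcal B^\beta(s)\le\sum_{k=1}^{m}C\sigma_{k-1}^{-C}\sigma_k^{\alpha}\mathcal B^\beta(s)+C\sigma_m^{-C}
\]
holds for $p$ above a threshold obtained from the Section-2 threshold by incrementing the relevant indices; choosing $\sigma_1\gg\cdots\gg\sigma_m$ successively small absorbs the first sum into $\tfrac12\mathcal B^\beta(s)$, and minimising the resulting threshold over $m$ yields $\min(p_s(d),2(d+2)/d)$.

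The main obstacle lies with the vector-valued multilinear estimates themselves: unlike the scalar case where the surface is fixed, here one works with a smooth $t$-family of hypersurfaces, and the Bennett--Carbery--Tao induction-on-scale must be executed so that transversality is measured fibrewise in $t$ while the $L^2_t$ norm is preserved. Quantitative stability under $\psi\mapsto\psi_{z_0}^\epsilon$ is guaranteed by Lemma \ref{classsquare}, but the precise numerology producing $p_s(d)$---and in particular the uniformity of constants as $\eta$ ranges over $\cE(N)$---requires careful bookkeeping that is not needed in the scalar argument of Section 2.
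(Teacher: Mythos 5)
Your proposal follows essentially the same route as the paper's proof: the induction quantity $\mathcal B^\beta$, parabolic rescaling stability via Lemma~\ref{classsquare} and Proposition~\ref{rescalesquare}, the vector-valued multilinear estimates (Propositions~\ref{multisq}, \ref{confinedsqr} and their corollaries), and a multi-scale decomposition parallel to Section~\ref{multi-scale} closing with a recurrence in $\mathcal B^\beta$. However there is a numerical imprecision in your statement of the rescaling gain that, if carried through, would change the threshold. Proposition~\ref{rescalesquare} gives
\[
\|S_\delta f\|_p\le C\epsilon^{\frac1p+\frac12}B(\epsilon^{-2}\delta)\|f\|_p,
\]
not $C\epsilon\, B(\epsilon^{-2}\delta)\|f\|_p$. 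The single factor $\epsilon$ you attribute to the $L^2_t$ Jacobian is indeed the gain in \eqref{proj} on each frequency piece $\varphi_{\cP_k}(D)f$, but there are $O(\epsilon^{-1})$ such pieces indexed by $k$, and combining them by H\"older in $k$ (exploiting $p\ge2$) contributes an additional $\epsilon^{1/p-1/2}$. When this is fed into the $\sigma_1$-scale decomposition via Lemma~\ref{vector} (as in \eqref{2linear}), the net gain on the first term is $\sigma_1^{2/p}B(\sigma_1^{-2}\delta)$, whereas your version would give $\sigma_1^{1/p+1/2}B(\sigma_1^{-2}\delta)$. Since $p_s(d)$ comes out of matching precisely this power of $\sigma_k$ in \eqref{est1} against the confined-multilinear threshold $\frac{2(2d-k-1)}{2d-k-3}$ arising from Corollary~\ref{squarefunt222}, the wrong exponent produces the wrong formula. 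Two further points worth noting: what is actually used in the multi-scale step is the square-function modification Lemma~\ref{smod}, not Lemma~\ref{scattered} itself, because the quantities transferred across the spatial cubes $\fQ^i$ are the $\ell^2$-aggregates $\fS_{\fd^i}f$ rather than single Fourier-localized functions; and the paper first reduces to $d\ge9$ (so that $\min(p_s,\tfrac{2(d+2)}{d})=p_s$ and $p_s>\tfrac{2(d-1)}{d-2}$), the case $d<9$ being already covered by the bilinear result of \cite{lrs}. With these corrections your sketch matches the paper's argument.
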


\subsubsection*{Proof of Theorem \ref{mainsquare}} 
By choosing small $\eps>0$ in the above, we can make $\psi_{br}$ be in
$\fgee$ for any $\epsilon_0$ and $N$. Hence, Proposition
\ref{localf2} gives \eqref{square1} for any $\epsilon>0$ if  $p\ge \min ( p_s(d), \frac{2(d+2)}d)$. Hence, dyadic decomposition of the multiplier operator  in \eqref{modified-square} and using \eqref{square1}  followed by summation along dyadic
pieces gives  \eqref{modified-square} for $\alpha>d/2-d/p$. This proves  Theorem \ref{mainsquare}.\qed

\begin{rem}\label{spherical} As has been shown before, for the proof of Theorem \ref{mainsquare} it suffices to consider an operator  which is defined without $\eta$  but by allowing $\eta$ in \eqref{srdef} we can
handle the square function estimates for the operator
$f\to\phi\big(\frac{1-|D|/t}{\delta}\big)f$ which is closely related to smoothing estimates for the solutions to the 
Schr\"odinger and wave equations (for example, see \cite{lrs}).  In fact,  Proposition
\ref{localf2} implies, for $\epsilon>0$, \Be \label{sphericalsq}
\Big\|\Big(\int_{1/2}^{2} \Big|\phi\Big(\frac{1-|D|/t}{\delta}\Big)
f\Big|^2 dt\Big)^\frac12 \Big\|_{p} \le \delta^{\frac d2-\frac
dp-\epsilon} C\|f\|_p \Ee if $p\ge p_s(d)$. Indeed,   by finite
decompositions, rotation and scaling, as before, it is sufficient to
consider time average over the interval $I_\eps=(1-\eps^2,
1+\eps^2)$ and we may assume that $\widehat f$ is supported in
$B(-e_d,c\eps^2)$. Writing $1-|\xi|/t=t^{-2}(t+|\xi|)^{-1}(\tau-{\sqrt{t^2-|\zeta|^2}}{})(\tau+\sqrt{t^2-|\zeta|^2})$
for $\xi\in B(-e_d,c\eps^2)$,  the same change of variables $D_d\to
D_d+1$, $(D',D_d)\to (\eps D', \eps^2 D_d)$ and
 $t\to \eps^2t+1$ transforms $\phi(\frac{1-|\xi|/t}\delta)$ to
$\phi(\frac{\eta(\xi,t)(\tau-\psi_{br})}{\eps^{-2}\delta/2})$ with a smooth
$\eta$ which satisfies $ \eta\in (1-c\eps/2, 1+c\eps/2)$. Hence, we
now apply Proposition \ref{localf2} with sufficiently small $\eps$
to get \eqref{sphericalsq}.
\end{rem}

Similarly as before, in order to control $L^p$ norm of $S_\delta$ we define $B(\delta)=B_p(\delta)$ by
\begin{align*}
 B(\delta)\equiv {\sup} \Big\{ \| {S_\delta}
({\psi},\eta) f\|_{L^p}: \psi\in\fgee,\, \eta\in \cE(N),\,
 \|f\|_p\le 1,\, \supp \widehat f\subset \frac12 I^d\Big\}. \end{align*}
 %
As before, using Lemma \ref{kerneldelta1} it is easy to see that
$B(\delta)\le C$ if $\delta\ge 1$, and  $B(\delta)\le
C\delta^{-c}$ for some $c>0$, otherwise (for example, see the paragraph below Proposition \ref{multisq}). We also define for $\beta>0$ and $\delta\in (0,1)$, 
\[ \mathcal B^\beta(\delta)= \cB_{p}^\beta(\delta)\equiv \sup_{\delta <s \le 1}
s^{ \frac{d-2}2-\frac dp +\beta}\,\,
 B_{p}(s).\]
 Thus, Theorem \ref{mainsquare} follows if we show $\mathcal
B^\beta(\delta)\le C$ for  any $\beta>0$. As observed in the previous section  the bound for $\sdel f$ improves if the Fourier transform of $f$ is contained in a set of smaller diameter. 
The following plays a  crucial role in  the induction argument  (see Section \ref{pf-sq}). 

\begin{prop}\label{rescalesquare}
Let $0< \delta\ll 1$,
$\psi\in \overline{\fC}(\epsilon_\circ,N)$, and $\eta\in \cE(N)$.
Suppose that $\widehat f$ is supported in $\mathfrak q(a, \eps)$, $
10\sqrt\delta\le \eps\le 1/2$, and $a\in \frac12 I^d$. Then, if
$\epsilon_\circ>0$ is small enough, there is  a $ \kappa=
\kappa(\epsilon_\circ, N)$ such that \Be\label{rescaless}
\|{S_\delta} ({\psi},\eta) f\|_{p}\le
C\eps^{\frac1p+\frac12}B_{p}(\eps^{-2} \delta) \|f\|_p\Ee holds with
$C$, independent of $\psi$, and $\eps$, whenever $\eps\le \kappa$.
\end{prop}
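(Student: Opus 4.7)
The approach parallels that of Proposition \ref{rescale}, adapted to the square function via the parabolic rescaling $(\zeta,\tau,t)\mapsto(\zeta',\tau',s)$ of Lemma \ref{classsquare}, namely $\zeta=a'+\eps\mathcal H_{z_0}\zeta'$, $\tau=\psi(z_0)+\eps\nabla_\zeta\psi(z_0)\mathcal H_{z_0}\zeta'+\eps^2\tau'$, $t=t_0+\eps^2 s/\partial_t\psi(z_0)$, where $a=(a',\mu)$, $z_0=(a',t_0)$, and $t_0\in I$ is the unique solution of $\psi(a',t_0)=\mu$ (which exists, up to negligible error, whenever the multiplier action on $f$ is nontrivial, since $\partial_t\psi\approx 1$). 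The essential difference from the multiplier setting is that this rescaling sends $t\in I$ to $s\in[-C/\eps,C/\eps]$, outside the interval $I$ on which Lemma \ref{classsquare} guarantees $\psi^\eps_{z_0}\in\fgee$. My strategy is therefore to first decompose $\widehat f$ into $K\sim 1/\eps$ pieces in the height direction $h(\xi):=\tau-\psi(\zeta,t_0)$, and to perform a separate rescaling centered at a $j$-dependent point for each piece.

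Concretely, after the reduction $\widehat f\subset\mathfrak q(a,\eps/(10d))$ by subdivision, fix a smooth partition of unity $\{\varphi_j\}_{j\in\mathbb Z}$ of scale $\eps^2$ on $\mathbb R$ and set $\widehat{f_j}(\xi)=\varphi_j(h(\xi))\widehat f(\xi)$. Since $h(\xi)=O(\eps)$ on $\supp\widehat f$, only $K\sim 1/\eps$ values of $j$ contribute. For each such $j$ choose $t_j$ with $\psi(a',t_j)=\mu+cj\eps^2$ for a small constant $c$, and carry out the rescaling above with $z_0$ replaced by $z_j:=(a',t_j)$. A Taylor expansion (using $|t_j-t_0|=O(j\eps^2)=O(\eps)$ and $\psi\in\fgee$) verifies that the rescaled $\widehat{g_j}(\xi'):=\widehat{f_j}(L_j\xi')$ lies in $\tfrac12 I^d$ (possibly after a harmless finite subdivision), while the multiplier constraint $|\tau-\psi(\zeta,t)|\lesssim\delta$, combined with the height localization of $\widehat{f_j}$ and the condition $\delta\le\eps^2/100$, forces $|t-t_j|=O(\eps^2)$, that is, $s\in[-C,C]\subset I$. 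The rescaled multiplier takes the form $\phi(\eta^\eps_j(\xi',s)(\tau'-\psi^\eps_{z_j}(\zeta',s))/(\eps^{-2}\delta))$ with $\eta^\eps_j\in\cE(N)$ and $\psi^\eps_{z_j}\in\fgee$ by Lemma \ref{classsquare}. Tracking the Jacobians ($|\det L_j|=\eps^{d+1}$ and $dt=(\eps^2/\partial_t\psi(z_j))\,ds$), the definition of $B_p(\eps^{-2}\delta)$ yields the per-piece estimate
\[\|S_\delta f_j\|_p\le C\eps\, B_p(\eps^{-2}\delta)\|f_j\|_p.\]

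To assemble the pieces, observe that for $|j-j'|\ge 2$ the $t$-supports of the multiplier actions on $\widehat{f_j}$ and $\widehat{f_{j'}}$ are disjoint intervals of length $O(\eps^2)$ centered at $t_j, t_{j'}$, which yields the almost-orthogonality $(S_\delta f(x))^2\le C\sum_j(S_\delta f_j(x))^2$. Since $p\ge 2$, the power-mean inequality gives $(\sum_j a_j^2)^{p/2}\le K^{p/2-1}\sum_j a_j^p$, and a Rubio de Francia type bound $\sum_j\|f_j\|_p^p\lesssim\|f\|_p^p$ for the curved-slab partition $\varphi_j(h)$ (which reduces to the classical one-dimensional inequality after the smooth change of variables $\xi\mapsto(\zeta,h(\xi))$, whose Jacobian is bounded above and below since $\psi\in\fgee$) then combine to give
\[\|S_\delta f\|_p^p\le CK^{p/2-1}\bigl(\eps B_p(\eps^{-2}\delta)\bigr)^p\|f\|_p^p=C\eps^{p/2+1}B_p(\eps^{-2}\delta)^p\|f\|_p^p,\]
which is the claimed bound $\|S_\delta f\|_p\le C\eps^{1/p+1/2}B_p(\eps^{-2}\delta)\|f\|_p$.

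The main obstacle is the per-piece rescaling in the second paragraph: a single global rescaling at $z_0$ alone is insufficient, because it pushes $s$ outside the range where $\psi^\eps_{z_0}\in\fgee$ is controlled (Lemma \ref{classsquare}), and it produces a long strip rather than a bounded cube in $\xi'$-space. The height-based decomposition with $j$-dependent rescaling centers $z_j$ is designed to fix both issues, and the resulting $1/\eps$ multiplicative loss in the number of pieces is then exactly cancelled by the combination of the $L^2_t$-almost-orthogonality ($K^{1/2}$ gain) and Rubio de Francia $L^p$-orthogonality ($K^{1/p}$ gain) to produce the sharp factor $\eps^{1/p+1/2}$.
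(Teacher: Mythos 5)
Your argument is essentially the paper's, viewed from the dual side: the paper partitions the time interval $I$ into $O(1/\eps)$ subintervals $I_k$ of length $\sim\eps^2$ and observes that for $t\in I_k$ the multiplier action on $\widehat f$ is supported in a parallelepiped $\cP_k$, giving the \emph{exact} identity $(S_\delta f)^2=\sum_k\int_{I_k}|\phi(\cdots)\varphi_{\cP_k}(D)f|^2\,dt$; you instead cut $\widehat f$ into $O(1/\eps)$ height-slabs $f_j$ of thickness $\eps^2$ and derive the \emph{approximate} pointwise bound $(S_\delta f)^2\lesssim\sum_j(S_\delta f_j)^2$ from the almost-disjoint $t$-supports. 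Both are fine, both produce the same piece count, and the per-piece rescaling at $z_j$ and the $\eps^{1/p-1/2}\cdot\eps$ bookkeeping you do match the paper's \eqref{proj} and subsequent H\"older step exactly.

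The one place where the argument as written has a gap is the final orthogonality bound $\sum_j\|f_j\|_p^p\lesssim\|f\|_p^p$. You assert this follows from one-dimensional Rubio de Francia ``after the smooth change of variables $\xi\mapsto(\zeta,h(\xi))$.'' That reduction is not valid: a nonlinear change of variables in the frequency domain does not conjugate the associated Fourier multiplier operators, so $L^p$-boundedness of the flat-slab projections does not transfer to the curved slabs $\varphi_j(h(\cdot))$ this way. What you actually need is much weaker and available directly: interpolate between $p=2$, where bounded overlap of the slabs plus Plancherel gives $\sum_j\|f_j\|_2^2\lesssim\|f\|_2^2$, and $p=\infty$, where $\sup_j\|f_j\|_\infty\lesssim\|f\|_\infty$ follows from the uniform $L^1$ bound on the kernels $\mathcal F^{-1}(\varphi_j(h(\cdot)))$ (an integration-by-parts estimate of the type in Lemma \ref{kerneldelta1}, using $\|\psi\|_{C^N}$). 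This is exactly how the paper handles the analogous family $\{\varphi_{\cP_k}(D)\}$, and it closes your argument without appealing to Rubio de Francia at all.
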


\begin{proof} By breaking the support of $\widehat f$ into a finite number of dyadic cubes,  we may assume
that $\widehat f$ is supported in $\fq(a,\nu\eps)$ for a small
constant $\nu>0$ satisfying  $\nu^2 d^2\in [2^{-5},2^{-4}) $. This only increases the bound by a constant multiple.  Since $\widehat f$\, is supported in $\mathfrak q(a,
\nu\eps)$ and $a=(a',a_d)\in \frac12I^d$,   from \eqref{ell2} and the
fact that $1/2\le \eta\le 1 $ it is clear that
$\phi\big(\frac{\eta(D,t)(D_d-\psi(D',t))}{\delta}\big)f$
$\not\equiv 0$ for $t$ contained in an interval $[\alpha, \beta]$ of
length $\lesssim \nu\eps$ because
$\phi\big(\frac{\eta(\xi,t)(\tau-\psi(\zeta,t))}{\delta}\big)$ is
supported in $O(\delta)$-neighborhood of $\tau=\psi(\zeta,t)$.

 Let $\alpha=t_0<t_1<\ldots< t_l=\beta$, $l\le O(\eps^{-1})$, such
that $t_{k+1}-t_{k}\le \nu^2\eps^2$. Since $\delta\le
10^{-2}\eps^2$, by \eqref{ell1} and \eqref{ell2} it follows that if $t\in
[{t_k}, t_{k+1}]$, then 
$\phi\big(\frac{\eta(\xi,t)(\tau-\psi(\zeta,t))}{\delta}\big)\widehat
f(\xi)$ is supported in the parallelepiped
\[\cP_k=\Big\{(\zeta,\tau): \max_{i=1,\dots, d-1}|\zeta_i-a'_i|<\nu\eps, 
|\tau-\psi(a',{t_k})-\nabla_\zeta\psi(a',{t_k})(\zeta-a')|
\le   2d^2\nu^2\eps^2 \Big\}.\] 
This follows from  Taylor's theorem since  $\psi\in \fgee$.  
By  \eqref{ell2} it is easy to see that
$\{\cP_k\}_{k=1}^l$ are overlapping boundedly. In fact, 
$\phi\big(\frac{\eta(\xi,t)(\tau-\psi(\zeta,t))}{\delta}\big)\widehat
f(\xi)$, $t\in [{t_k},
t_{k+1}]$,  is supported in
\[\widetilde \cP_k=\{\xi\in \mathfrak q(a, c\eps): |\tau-\psi(\zeta,t_k))|\le C\eps^2 \}, \,k=0,\dots, l-1, \]
with $C\ge 3d^2\nu^2\eps^2$  and  $\{\widetilde \cP_k\}$  are boundedly overlapping because of \eqref{ell2}, and by 
Taylor's expansion  it is easy to see that $\cP_k\subset
\widetilde\cP_k$ because the 2nd remainder is uniformly $O(\eps^2)$
for $\psi\in \fgee$.

Let $\varphi$ be a smooth function supported in $2I^d$ and
$\varphi=1$ on $I^d$. Let $L_{\cP_k}$ be the affine map which
bijectively maps $\cP_k$ to $I^d$, and set
$\varphi_{\cP_k}=\varphi(L_{\cP_k}\cdot)$ so that $\varphi_{\cP_k}$
vanishes outside of  $2\cP_k$ and equals $1$ on $\cP_k$. Here
$2\cP_k$ denotes the parallelepiped which is given by dilating
$\cP_k$ twice from the center of $\cP_k$. Then we have
\begin{align*}
({S_\delta}f(x))^2= \sum_k \int_{I_k} \Big|
\phi\Big(\frac{\eta(D,t)(D_d-\psi(D',t))}{\delta}\Big)
\varphi_{\cP_k}(D)f(x)\Big|^2 dt.
\end{align*}
Since $p\ge 2$, by H\"older's inequality it follows that
\begin{align*}
{S_\delta}f(x)\le C\eps^{\frac1p-\frac12}\Big(\sum_k
\Big\|\phi\Big(\frac{\eta(D,t)(D_d-\psi(D',t))}{\delta}\Big)
\varphi_{\cP_k}(D)f(x)\Big\|_{L^2_t(I_k)}^p\Big)^\frac1p.
\end{align*}
Hence it is sufficient to show that
\Be \label{proj}
 \Big\|\Big\|\phi\Big(\frac{\eta(D,t)(D_d-\psi(D',t))}{\delta}\Big)
\varphi_{\cP_k}(D)f\Big\|_{L^2_t(I_k)} \Big\|_p\le C \eps
B_{p}(\eps^{-2} \delta) \|\varphi_{\cP_k}(D)f\|_p. \Ee Because
$(\sum_k \|\varphi_{\cP_k}(D)f\|_p^p)^\frac1p\le C \|f\|_p$ for
$2\le p\le \infty$. This follows by interpolation between the
estimates for $p=2$ and $p=\infty$. The first is an easy consequence
of Plancherel's theorem because $\{2\cP_k\}$ are boundedly
overlapping and the latter is clear since $\mathcal
F^{-1}(\phi_{\cP_k})\in L^1$ uniformly.

Now we make the change of variables
\[
t\to \eps^2 (\partial_t\psi(a',{t_k}))^{-1}t+{t_k}, \quad
\xi \to L(\xi)=(L'(\xi), L_d(\xi)),\]
where
\begin{align*}
&L'(\xi)= \eps\mathcal H^\psi_{(a',{t_k})}\zeta+a',\quad L_d(\xi) =
\eps^2 \tau+\psi(a',{t_k})+\eps\nabla_\zeta\psi(a',{t_k})\mathcal
H^\psi_{(a',{t_k})}\zeta,
\end{align*}
and
\[\eps^{2}x_d\to  x_d, \quad  \eps\mathcal H^\psi_{(a',{t_k})}(x'+ x_d\nabla_\zeta \psi(a',{t_k}))\to x'.\]
Then, \eqref{proj} follows if we show
\begin{align*}
 \Big\|\Big\|\phi\Big(\frac{\eta(L(D),t)(D_d-\psi_{a',{t_k}}^\eps(D',t))}{\eps^{-2}\delta}\Big)
f\Big\|_{L^r_t(0, 2\nu^2)} \Big\|_p\le C B_{p}(\eps^{-2} \delta) \|f\|_p
\end{align*}
when the support  $\widehat f$ is contained  in $L^{-1}(2\mathcal
P_k)$. Clearly, $\eta(L(\xi),t) \in \cE(N)$ and  $L^{-1}(2\mathcal P_k)$ is contained in the set  
$\{(\zeta, \tau): |\zeta|\le 4\nu , |\tau|\le 8d^2\nu^2\} \subset \frac12 I^d$. From Lemma \ref{classsquare} there 
exists $\kappa>0$ such that $\psi_{a',{t_k}}^\eps\in
\fgee$ if $0<\eps\le \kappa$. Hence, 
using the definition of $B_{p}(\delta)$ we get the desired
inequality for $\eps\le \kappa$.
\end{proof}

\subsection{Multi-(sub)linear square function estimates}
Let $\psi\in \fgee$ and   set  
\Be
\label{gamma}\Gamma^t=\Gamma^t(\psi):=\big\{(\zeta,\psi(\zeta,t)):
\zeta\in \frac12 I^d\, \big\}. 
\Ee  As before we denote by
$\Gamma^{t}(\delta)$ the $\delta$-neighborhood
$\Gamma^{t}+O(\delta)$. Clearly, from \eqref{ell2} it follows that,
for $\delta>0$, 
\Be\label{deltanbd}
\Gamma^{t}(\delta)\cap\Gamma^{\,s}(\delta)=\emptyset, \,\,\text{ if
} |t-s|\ge C\delta \Ee for some $C>0$. We also denote by $\mathrm
N^t$ the (upward) normal map from the surface $\Gamma^t$ to $\mathbb
S^{d-1}$.

\begin{defn}[Normal vector field $\mbn=\mbn(\psi)$] \label{nvector}
The map $(\zeta, t)\to (\zeta, \psi(\zeta,t))$ is clearly one to one
and we may assume that the image of this map contains $I^d$ by extending  $\psi(\zeta,t)$ to a larger set $I^{d-1}\times CI$, while \eqref{ell1} is satisfied. Hence,
 for each $\xi=(\zeta,\tau)\in I^d$ there is a unique $t$
such that $\xi=(\zeta, \psi(\zeta,t))$.  Then we define $\mathbf
n(\xi)$ to be the normal vector to $\Gamma^{t}$ at $\xi$, which forms
a vector field on $I^d$. \end{defn}

A natural attempt for multilinear generalization of  $S_\delta$  is  to consider $\prod_{i=1}^kS_\delta f_i$ under transversality condition between $\supp f_i$.  But,  induction on scale argument  does not work  well with this naive generalization and  it  doesn't seem easy to obtain  the sharp multilinear square function estimates directly.   We get around the difficulty  by considering a vector valued extension in which we discard the exact structure of the operator $S_\delta$.  As is clearly seen in its proof, the estimate in Proposition
\ref{multisq} is not limited to the surfaces given by $\psi\in
\fgee$ but it holds for more general class of surfaces as long as
the transversality  is satisfied.

\begin{prop}\label{multisq}  Let  $2\le k\le d$ be an integer and $0<\sigma\ll 1$, and let $\Gamma^{t}$ be given by $\psi\in \fgee$, and the functions  $G_i$, $1\le i\le k$, be defined on $\mathbb R^d\times  I$.
Suppose that, for each $t\in I$,  $G_1(\cdot,t), \dots, G_k(\cdot,t)$ satisfy that, for $0<\delta\ll \sigma$,
\Be\label{deltanbdg} \supp\,\widehat G_i(\cdot, t)\subset
\Gamma^t(\delta), \quad t\in I,\Ee and suppose that \Be
\label{transverse1} V\!ol(\mbn(\xi_1), \mbn(\xi_2), \dots,
\mbn(\xi_k))\gtrsim  \sigma, \Ee
whenever $\xi_i\in \supp\, \widehat G_i(\cdot, t)+O(\delta)$ for some $t\in I$. %
 Then, if $p\ge
2k/(k-1)$ and $\epsilon_\circ>0$ is small enough, for $\epsilon>0$  there  is an  $N=N(\epsilon)$
such that
 \Be \label{multil2}\Big\| \prod_{i=1}^k
\|G_i\|_{L^2_t(I)}\Big\|_{L^\frac pk(B(x,\delta^{-1}))}\le
C\sigma^{-C_\epsilon}\delta^{-\epsilon} \prod_{i=1}^k
\big(\delta^\frac12 \|G_i\|_{L^2_{x,t}}\big)\Ee holds with $C,
C_\epsilon$, independent of $\psi$.
\end{prop}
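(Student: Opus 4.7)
The proof realizes \eqref{multil2} as a vector-valued extension of Theorem \ref{multi-l2}, the key point being that the scalar induction-on-scale proof of Bennett-Carbery-Tao \cite{becata} only uses the multilinear Kakeya inequality with nonnegative weights on tubes, and so tolerates $L^2_t$-valuation of those weights.

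First, the range $p \ge 2k$ follows in one step. Since $p/(2k) \ge 1$, Minkowski's integral inequality gives
\[
\Big\|\prod_i \|G_i\|_{L^2_t}\Big\|_{L^{p/k}(B)}^2 = \Big\|\int_{I^k}\prod_i |G_i(\cdot,t_i)|^2\,dt\Big\|_{L^{p/(2k)}(B)} \le \int_{I^k}\Big\|\prod_i G_i(\cdot,t_i)\Big\|_{L^{p/k}(B)}^2\,dt,
\]
and Theorem \ref{multi-l2} applies to the inner product for each frozen tuple $(t_1,\dots,t_k)$, with the required transversality supplied by \eqref{transverse1}; Fubini then yields the claimed bound. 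Combined with the endpoint $p = 2k/(k-1)$ (both sides having the same right-hand side), interpolation covers all intermediate exponents.

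For the endpoint $p = 2k/(k-1)$, I set $R = \delta^{-1}$ and introduce the induction quantity
\[
M(R) = \sup\,\sigma^{C_\epsilon}R^{-\epsilon}\Big(\prod_i R^{-1/2}\|G_i\|_{L^2_{x,t}}\Big)^{-1}\Big\|\prod_i \|G_i\|_{L^2_t}\Big\|_{L^{2/(k-1)}(B(x_0,R))},
\]
where the supremum runs over admissible $\psi \in \fgee$, functions $G_i$ satisfying \eqref{deltanbdg}--\eqref{transverse1}, and base points $x_0$. A crude $L^\infty$ bound (Cauchy--Schwarz in $\xi$ using $|\Gamma^t(\delta)|\sim \delta$) yields $M(R) \lesssim R^{C_0}$ for some finite $C_0$, so it suffices to prove the self-improving inequality $M(R) \le C\,M(R^{1/2})^{1/2}+C$, which iterated $O(\log(1/\epsilon))$ times forces $M(R) \le C_\epsilon$. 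Cover $B(x_0,R)$ by boundedly overlapping balls $B_z = B(z, R^{1/2})$: multiplication by a smooth cutoff on $B_z$ enlarges the Fourier support of $G_i(\cdot,t)$ from $\Gamma^t(R^{-1})$ to $\Gamma^t(CR^{-1/2})$, so the inductive hypothesis at scale $R^{1/2}$ applies on each $B_z$. Squaring and summing over $z$ reduces the bound to an integral of the form
\[
\int_{B(x_0,R)}\prod_{i=1}^k\Big(\sum_{T \in \mathbb T_i}\|a_T^{(i)}\|_{L^2_t}^2\,\chi_T(x)\Big)^{1/(k-1)}\,dx,
\]
where the $T$'s are $R^{1/2}\times\cdots\times R^{1/2}\times R$ tubes arising from wave-packet decompositions of $G_i(\cdot,t)$, and the transversality of the normals $\mbn$ yields $k$-transversal tube directions. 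The scalar multilinear Kakeya inequality of Guth \cite{gu}, applied with the nonnegative weights $\|a_T^{(i)}\|_{L^2_t}^2$, then closes the induction.

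The main obstacle is the $t$-dependence of the wave-packet tubes: tubes adapted to $\Gamma^t$ rotate as $t$ varies, while scalar Kakeya is formulated for a single fixed tube family. This is resolved by observing that on $t$-slabs of length $\sim R^{-1/2}$ the normals $\mbn$ change by at most $O(R^{-1/2})$ (by the smoothness of $\psi$), which is finer than the angular resolution of the tubes; one can therefore freeze the tube family on each $t$-slab at the cost of constant factors and apply scalar multilinear Kakeya with the $L^2_t$-norms of the wave-packet coefficients as nonnegative weights.
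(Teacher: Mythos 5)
Your sketch follows essentially the same route as the paper: interpolate between a trivial large-$p$ bound and the endpoint $p=2k/(k-1)$, and prove the endpoint by the Bennett--Carbery--Tao induction on scales, reducing at each step to scalar multilinear Kakeya with $L^2_t$-norms of wave-packet coefficients as nonnegative weights. (For the large-$p$ part the paper uses the even simpler $p=\infty$ Cauchy--Schwarz bound $|G_i(x,t)|\lesssim\delta^{1/2}\|G_i(\cdot,t)\|_2$ and interpolates; your Minkowski observation for $p\ge 2k$ is an acceptable alternative.)

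The one place where your write-up glosses over a real subtlety is the treatment of the $t$-dependence. You say that on a $t$-interval of length $\sim R^{-1/2}$ the normal directions only change by $O(R^{-1/2})$, within angular resolution, so the tube family can be "frozen at the cost of constant factors." But the Fourier-space slab $\Gamma^t(\delta)\cap\mbq$ translates in the normal direction by $O(R^{-1/2})$ over such a $t$-interval, which is $R^{1/2}$ times its own thickness $\delta=R^{-1}$ — so the slab certainly is not frozen, and a single $t$-independent kernel adapted to the union over $t$ of these slabs would be supported on all of $\mbq$, whose physical dual is a ball of radius $R^{1/2}$, not a tube. What does survive is the \emph{physical} tube: a Fourier translation is a modulation in $x$, so the $t$-dependent kernels $K^t_{i,\mbq}$ adapted to the moving slab all have $|K^t_{i,\mbq}|$ concentrated on a common doubling of a fixed $R^{1/2}\times\cdots\times R^{1/2}\times R$ tube. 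With this $t$-dependent-kernel argument made explicit (take $|G_{i,\mbq}(x,t)|^2\lesssim |G^t_{i,\mbq}(\cdot,t)|^2\ast|K^t_{i,\mbq}|(x)$ and integrate in $t$ using $|K^t_{i,\mbq}|\lesssim\chi_{CT}/|T|$ for a fixed tube $T$), your shortcut is correct. The paper avoids the $t$-dependent kernel by instead further subdividing each $I_{i,\mbq}$ into subintervals $I_{i,\mbq}^l$ of length $R^{-1}$, on which the Fourier slab genuinely is fixed; the two discretizations are equivalent, but the distinction between "direction changes by $O(R^{-1/2})$" and "the slab is frozen" should not be blurred.

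Lastly, the recursion $M(R)\le C\,M(R^{1/2})^{1/2}+C$ has an extraneous power of $1/2$ on $M$. In the BCT scheme, applying the inductive hypothesis at scale $R^{1/2}$ gives a loss of $(R^{1/2})^\alpha=R^{\alpha/2}$, so the loss exponent halves precisely because $M(R^{1/2})$ already lives at the smaller scale; the correct recursion is multiplicative, $M(R)\lesssim\sigma^{-\kappa}R^{c\eps}M(R^{1/2})$, which after $O(\log(1/\eps))$ iterations gives the claimed $\sigma^{-C_\eps}R^{O(\eps)}$ bound. This is cosmetic and does not affect the substance of your argument.
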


Without being concerned about the optimal $\alpha$ for a while, we
first observe that,  for $p\ge 2$, there is an $\alpha$ such that
\Be \label{trivial} \Big\|\|G_i\|_{L^2_t(I)}\Big\|_{L^p(\mathbb
R^d)}\le C\delta^{-\alpha} \|G_i\|_{L^2_{x,t}} \Ee holds uniformly
if $\psi\in \fgee$ and  $N$ is large enough ($N\ge 100d$). (It is
enough to keep $\|\psi\|_{C^N(I^d)}$ uniformly bounded.)
To see this, let $\varphi$ be a smooth function supported in $2I$
and $\varphi=1$ on $I$,
and we set $K_\delta^t=\mathcal F^{-1}(\varphi\big(\frac{\tau-\psi(\zeta,t)}{C\delta}\big)\widetilde\chi(\xi))$.
Then, by Lemma \ref{kerneldelta1} $|K_\delta^t(x)|\le C\delta
\fK_M(x)$ for a large $M$ with $C$, depending only on
$\|\psi\|_{C^N(I^d)}$. Since $\supp\,\mathcal F(G_i(\cdot,
t))\subset \Gamma^t(\delta)$, 
$G_i(\cdot, t) =K^{t}_\delta\ast G_i(\cdot, t)$. So, $|G_i(x, t)|
\le C\delta\fK_{M}\ast |G_i(\cdot, t)|,$ $t\in I$ and 
by Minkowski's inequality  we get \Be \label{kernel}
\|G_i(x, t)\|_{L^2_t(I)} \le C\delta\fK_M\ast (\|G_i(\cdot,
t)\|_{L^2_t(I)})(x). \Ee Young's convolution inequality gives the
inequality \eqref{trivial}, namely with $\alpha={d-1}$, if taking sufficiently large $M$.

\begin{proof}[Proof of Proposition \ref{multisq}]
Since $\mathcal F(G_i(\cdot,
t))=\varphi\big(\frac{\tau-\psi(\zeta,t)}{C\delta}\big)\widetilde\chi(\xi)
\mathcal F(G_i(\cdot, t))$, by Schwarz's inequality and Plancherel's
theorem, $|G_i(x,t)|\lesssim \delta^\frac12 \|G_i(\cdot,t)\|_{2}$.
So, this gives  \eqref{multil2}  for $p=\infty$. Thus, by
interpolation it is sufficient to show \eqref{multil2} with
$p=\frac{2k}{k-1}$.

Let us set $R=\delta^{-1}$ and we may set $x=0$. Following the same argument as in  
the proof of Proposition \ref{confined} we start with the assumption
  that, for $0<\delta\ll \sigma$,
\Be\label{assume1} \Big\| \prod_{i=1}^k
\|G_i\|_{L^2_t(I)}\Big\|_{L^\frac 2{k-1}(B(0,R))}\lesssim  R^\alpha
R^{-\frac k2} \prod_{i=1}^k \|G_i\|_{L^2_{x,t}}\Ee holds uniformly
$\psi\in \fgee$ whenever  \eqref{deltanbdg} and \eqref{transverse1}
are satisfied. By \eqref{trivial} and H\"older's inequality, this is
true for a large $\alpha>0$.
Hence, it is sufficient to show  \eqref{assume1} implies that for
$\eps>0$ there is an $N=N(\eps)$ such that, for some $\kappa>0$,
 \Be \label{assume2} \Big \| \prod_{i=1}^k
\|G_i\|_{L^2_t(I)}\Big\|_{L^\frac 2{k-1}(B(0,R))}\lesssim
C_\epsilon\sigma^{-\kappa} R^{\frac\alpha 2+c\eps}
R^{-\frac{k}2}\prod_{i=1}^k \|G_{i}\|_{L^2_{x,t}} \Ee holds
uniformly for $\psi\in \fgee$.  Then, iterating this implication from \eqref{assume1} to \eqref{assume2} gives the desired inequality. (See the paragraph below \eqref{l2222}.)

Since $\widehat \rho_{B(z,\sqrt R)}$ is supported in a ball of
radius $\sim R^{-\frac12}$,  the Fourier transform of
$\rho_{B(z,\sqrt R)} G_i(\cdot, t)$ is contained in
$\Gamma^{t}+O(R^{-1/2})$ for each $t$ and \eqref{transverse1} holds
with $\delta=R^{-\frac12}$ since $\delta\ll \sigma$. Hence, by the assumption \eqref{assume1},
it follows that
\Be\label{half0} \Big \| \prod_{i=1}^k \|\rho_{B(z,\sqrt R)}
G_i\|_{L^2_t(I)}\Big \|_{L^{\frac 2{k-1}}(B(z,\sqrt R))}\le C
R^\frac\alpha 2 R^{-\frac k4} \prod_{i=1}^k \|\rho_{B(z,\sqrt R)}
G_i\|_{L^2_{x,t}}. \Ee

 We now decompose  $G_i(\cdot, t)$ into $\{G_{i,\mbq}(\cdot,
 t)\}$ which is defined by
\Be \label{decomp2}\mathcal F({G_{i,\mbq}(\cdot, t)})=\chi_\mbq
\mathcal F( {G_{i}(\cdot,t)}).\Ee Here $\{\mbq\}$ are the dyadic
cubes of sidelength $l$, $R^{-1/2}<l\le 2 R^{-1/2}$,  which we already used in
the Proof of Proposition \ref{confined}. We write 
\[ G_i(x,t)=\sum_{\mbq} G_{i,\mbq}(x,
 t).\]
In what follows we may assume $G_{i,\mbq}\neq 0$. By
\eqref{deltanbdg} it follows that, for each $t$, the cubes
$\{\mbq\}$ appearing in the sum are contained in
$\Gamma^{t}(R^{-\frac12})$ because $G_{i,\mbq}(\cdot, t)=0$, otherwise.
We also note from \eqref{ell2} that there is an interval
$I_{i,\mbq}$ of length $C R^{-1/2}$ such that $G_{i,\mbq}(\cdot,
t)=0$ if $t\not\in I_{i,\mbq}$.  Hence we may multiply the
characteristic function of $\chi_{I_{i,\mbq}}$ so that \Be
\label{interval} G_{i,\mbq}=G_{i,\mbq}(\cdot, t)
\chi_{I_{i,\mbq}}(t).\Ee Since the Fourier supports of
$\{\rho_{B(z,\sqrt R)} G_{i,\mbq}(\cdot, t)\}$ are boundedly
overlapping,   by Plancherel's theorem it follows that
\Be\label{l2ortho} \prod_{i=1}^k \|\rho_{B(z,\sqrt R)}
G_i\|_{L^2_{x,t}} \le C \prod_{i=1}^k \Big\|\Big(\sum_{\mbq}|\rho_{B(z,\sqrt R)}
G_{i,\mbq}|^2\Big)^\frac12\Big\|_{L^2_{x,t}}.\Ee
Combining this with \eqref{half0} we have 
\[
\Big \| \prod_{i=1}^k \|\rho_{B(z,\sqrt R)} G_i\|_{L^2_t(I)}\Big \|_{L^
{\frac 2{k-1}}}\le C R^\frac\alpha 2 R^{-\frac k4} \prod_{i=1}^k
\Big\|\Big(\sum_{\mbq}|\rho_{B(z,\sqrt R)}
G_{i,\mbq}|^2\Big)^\frac12\Big\|_{L^2_{x,t}}. 
\] 
Since $\rho_{B(z,\sqrt R)}$ is rapidly decaying outside of $\ball
z{\sqrt R}$, we have for any large $M>0$ 
\Be \label{half2}
\begin{aligned}
\Big \| \prod_{i=1}^k \|\rho_{B(z,\sqrt R)} G_i\|_{L^2_t(I)}\Big
\|_{L^ {\frac 2{k-1}}}
\\
\lesssim  R^{\frac\alpha 2-\frac k4} 
\prod_{i=1}^k
\Big\|&\chi_{B(z,R^{\frac12+\epsilon})}\Big(\sum_{\mbq}|
G_{i,\mbq}|^2\Big)^\frac12\Big\|_{L^2_{x,t}}+  R^{-M}
\prod_{i=1}^k \| G_i\|_{L^2_{x,t}}.\end{aligned}
\Ee

We now partition the interval $I_{i,\mbq}$ further into
intervals $I_{i,\mbq}^l=[t_{l}, t_{l+1}]$, $l=1,\dots, \ell_0$, of
length $\sim R^{-1}$. Then the Fourier support of
$G_{i,\mbq}(\cdot,t)$, $t\in I_{i,\mbq}^l=[t_{l}, t_{l+1}]$ is
contained in $O(R^{-1})$ neighborhood of $\Gamma^{t_l}$. 
Let
$(\zeta_\mbq, \tau_\mbq)$ be the center of $\mbq$ and we define a set 
$\mbr_{i,\mbq}^{l}$  by  \Be
\mbr_{i,\mbq}^{l}=\Big\{(\zeta,\tau): |\zeta-\zeta_\mbq|\le
C\delta^\frac12, \ |\tau-\psi(\zeta_\mbq, {t_l})- \nabla_\zeta
\psi(\zeta_\mbq, {t_l}) \cdot(\zeta-\zeta_\mbq) |\le C\delta\Big\}\Ee
with a constant $C>0$ large enough.  It follows that Fourier transform of $G_{i,\mbq}(\cdot,t)$, $t\in I_{i,\mbq}^l$ is supported in  $\mbr_{i,\mbq}^{l}$.  This is easy to see from  2nd
order Taylor approximation because $\psi\in \fgee$.

Also define $\fm_{i,\mbq}^{l}$ by
\Be \label{flocal} \fm_{i,\mbq}^{l}=\rho\Big(\frac{\zeta-\zeta_\mbq}{C\sqrt\delta},
\frac{\tau-\psi(\zeta_q, {t_l})- \nabla_\zeta \psi(\zeta_q, {t_l})\cdot(\zeta-\zeta_\mbq)}{C\delta}
\Big) \Ee
with a suitable $C>0$ such that $\fm_{i,\mbq}^{l}$ is  comparable
to $1$  on $\mbr_{i,\mbq}^{l}$.
Now, we set \Be\label{tildeg}
 \mathcal F( G_{i,\mbq}^l(\cdot, t))=\big(\fm_{i,\mbq}^{l}\big)^{-1}\,\mathcal F(G_{i,\mbq}(\cdot, t))
\chi_{I_{i,\mbq}^l}(t).\Ee Denoting by $\mbn_{i,\mbq}^l$ 
the normal vector $\mbn(\zeta_\mbq, \psi(\zeta_\mbq, t_l))$, we also set
with a large   $C>0$
\[ {\mathbf T} _{i,\mbq}^{l}=\big\{x: |x\cdot \mbn_{i,\mbq}^l|\le C,\, |x-(x\cdot \mbn_{i,\mbq}^l)
\mbn_{i,\mbq}^l|\le CR^{-\frac12}\,\,\big\}.\]

Let us set $K_{i,\mbq}^{l}=\mathcal F^{-1}(\fm_{i,\mbq}^{l})$ so
that $ G_{i,\mbq}(\cdot,t)= G_{i,\mbq}^l(\cdot,t)\ast
K_{i,\mbq}^{l}$ if $t\in I_{i,\mbq}^l$. Since $\widehat \rho$ is
supported in $\fq(0,1)$, $|K_{i,\mbq}^{l}|\lesssim R^{-\frac{d+1}2}
\chi_{R{\mathbf T} _{i,\mbq}^{l}}$.
By \eqref{interval}  it
follows that $\sum_{\mbq} \| G_{i,\mbq}\ltwo^2=\sum_{\mbq} \|
G_{i,\mbq}\|_{L^2_t(I_{i,\mbq})}^2=
\sum_{\mbq,\,l}\|G_{i,\mbq}\|_{L^2_t(I_{i,\mbq}^l)}^2$. 
Thus, by \eqref{tildeg} we have 
\Be\label{squarehalf}
\begin{aligned}
\sum_{\mbq} \| G_{i,\mbq}\ltwo^2&=\sum_{\mbq,\,l}
\| G_{i,\mbq}^l(\cdot,t)\ast K_{i,\mbq}^{l}\|_{L^2(I_{i,\mbq}^l)}^2
\lesssim  \sum_{\mbq,\,l}\|
G_{i,\mbq}^l(\cdot,t)\|_{L^2(I_{i,\mbq}^l)}^2\ast |K_{i,\mbq}^{l}|
\\&\lesssim  \sum_{\mbq,\,l}\|
G_{i,\mbq}^l(\cdot,t)\|_{L^2(I_{i,\mbq}^l)}^2\ast
(R^{-\frac{d+1}2}\chi_{R{\mathbf T} _{i,\mbq}^l}).
\end{aligned}
\Ee

We denote by $\widetilde{{\mathbf T} }_{i,\mbq}^{l}$ the tube
$R^{1+\eps} {\mathbf T} _{i,\mbq}^{l}$ which is an $R^{1+\eps}$ times
dilation of ${\mathbf T} _{i,\mbq}^{l}$ from its center.
 So, from \eqref{squarehalf} we have, for $x,y\in B(z,R^{1/2+\eps})$,
\[  \sum_{\mbq} \| G_{i,\mbq} (y,\cdot)\ltwo^2\lesssim R^{ c\eps}  \sum_{\mbq,l} \| G_{i,\mbq}^l(\cdot,t)\|_{L^2(I_{i,\mbq}^l)}^2\ast
\Big(\frac{\chi_{\widetilde{{\mathbf T}
}_{i,\mbq}^{l}}}{|\widetilde{{\mathbf T} }_{i,\mbq}^{l}|}\Big)(x).
\]
Once we have this equality we can repeat the argument from \eqref{easy1} to \eqref{easy2} which is in Proof of Proposition \ref{confined} and   also using \eqref{half2}, we have
\begin{align*}
&\Big \| \prod_{i=1}^k \|G_i\|_{L^2_t(I)}\Lp {\frac 2{k-1}}{\ball
{0}{R}}
\!\!\! \lesssim \!\! R^{c\eps+\frac\alpha 2+ \frac{d-k}4}
 \Big \| \prod_{i=1}^k \!\Big(\!\sum_{\mbq,\,l}\| G_{i,\mbq}^l(\cdot,t)
 \|_{L^2(I_{i,\mbq}^l)}^2\ast(\frac{\chi_{\widetilde{{\mathbf T} }_{i,\mbq}^{l}}}
 {|\widetilde{{\mathbf T} }_{i,\mbq}^{l}|})\!\Big)^\frac12 \Lp {\frac 2{k-1}}{\ball {0}{\,2
R}} \!\!\!+ \cE,
\end{align*}
 where $\cE=R^{-M}
\prod_{i=1}^k \| G_i\|_{L^2_{x,t}}$ for any large $M>0$. 
Hence, for \eqref{assume2} it suffices to show that \begin{align*}\Big \|
\prod_{i=1}^k \Big(\sum_{\mbq,\,l}\| G_{i,\mbq}^l(\cdot,t)
 &\|_{L^2(I_{i,\mbq}^l)}^2\ast(\frac{\chi_{\widetilde{{\mathbf T} }_{i,\mbq}^{l}}}
 {|\widetilde{{\mathbf T} }_{i,\mbq}^{l}|})\Big)^\frac12 \Lp {\frac 2{k-1}}{\ball {0}{\,2
R}}\lesssim \sigma^{-\kappa} R^{c\epsilon}
R^{-\frac{d+k}4}\prod_{i=1}^k \|G_i\|_{L^2_{x,t}}.\end{align*}
Since $\|\| G_{i,\mbq}^l\|_{L^2_t(I_{i,\mbq}^l)}\|_2\sim
\|\|G_{i,\mbq}\|_{L^2_t(I_{i,\mbq}^l)}\|_2$ by \eqref{tildeg},
making use of disjointness of $I_{i,\mbq}^l$ and the supports of $
\mathcal F(G_{i,\mbq}(\cdot,t))$, and by Plancherel's theorem, $\sum_{\mbq,\,l}\|\|
G_{i,\mbq}^l\|_{L^2_t(I_{i,\mbq}^l)}\|_2^2\sim \sum_{\mbq} \|
G_{i,\mbq}\ltwo^2=\|\| G_{i}\|_{L^2_t(I)}\|_2^2.$   Hence, the above inequality 
follows from 
\begin{align*}
\Big \| \prod_{i=1}^k
\sum_{\mbq,\,l}f_{i,\mbq}^l\ast(\frac{\chi_{\widetilde{{\mathbf T}
}_{i,\mbq}^{l}}}
 {|\widetilde{{\mathbf T} }_{i,\mbq}^{l}|}) \Lp {\frac 1{k-1}}{\ball {0}{\,2
R}}\le C\sigma^{-\kappa} R^{c\epsilon} R^{-\frac{d+k}2}\prod_{i=1}^k
\sum_{\mbq,\,l} \|f_{i,\mbq}^l\|_1.\end{align*}

Let $\mathcal I_{i}=\{(\mbq,l): G_{i,\mbq}^l\neq 0\}$,  $I_i\subset \mathcal I_i$
and $\mathcal T_{i,\mbq}^l$ be a finite subset of
$\mathbb R^d$. By scaling and pigeonholing,  losing $(\log R)^C$ in its bound, this
reduces  to
\Be  \label{multi-kakeya00} \Big \| \prod_{i=1}^k
\sum_{(\mbq,\,l)\in \mathcal I_{i}}\sum_{\tau\in \mathcal T_{i,\mbq}^{l}}{\chi_{{{\mathbf
T}}_{i,\mbq}^{l}+\tau}}
  \Lp {\frac 1{k-1}}{\ball {0}{\,2
}}\le C\sigma^{-\kappa} R^{c\epsilon}
R^{\frac{d-k}2}\prod_{i=1}^k \sum_{(\mbq,\,l)\in \mathcal I_{i}}\sum_{\tau\in \mathcal T_{i,\mbq}^{l}}
{|{{\mathbf T} }_{i,\mbq}^{l}+\tau|}.
\Ee
Here we note that if $G_{i,\mbq}\neq 0$, then $\mbq\in \supp\, \mathcal F(G_i(\cdot,t))+O(\sqrt\delta)$ for some $t$.
So, by \eqref{transverse1} we have
$V\!ol(\mbn_{1}, \dots, \mbn_{k})\gtrsim\sigma$ whenever
$\mbn_i\in \{ \mbn_{i,\mbq}^{l}: G_{i,\mbq}^l\neq 0\}$, $i=1,\dots, k$.
Therefore, the estimate  follows from the  multilinear Kakeya
estimate which is stated below in Theorem \ref{k-kakeya}. This completes the proof.
\end{proof}

\begin{thm}[\cite{becata, gu, cv}]\label{k-kakeya} Let $2\le k\le d$, $1\ll R$ and\, $\fT_i$, $i=1,2,\dots, k$
be collections of  tubes
of  width $R^{-1/2}$ (possibly with infinite length), 
of which   major axes are parallel to the vectors in $\Theta_i\subset
\mathbb S^{d-1}$. Suppose  $ V\!ol(\theta_1,\theta_2, \dots, \theta_k)\ge \sigma $
holds whenever $\theta_i\in \Theta_i$, $i=1,\dots, k$, then there is
a constant $C$ such that,  for any subset $\mathcal T_i\subset \fT_i$, $i=1,\dots, k$,
\[\Big\| \prod_{i=1}^k \Big(\sum_{T_i\in \mathcal T_i} {\chi_{T_i}} \Big)\Big\|_{\frac{1}{k-1}(B(0,1))}\le
CR^{\frac{d-k}2}\sigma^{-1} \prod_{i=1}^k \Big(\sum_{T_i\in \mathcal T_i} | {T_i}|\Big).\]
\end{thm}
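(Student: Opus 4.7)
The plan is to deduce the statement from the endpoint $k=d$ multilinear Kakeya inequality (Guth \cite{gu}, with the linear $\sigma^{-1}$ dependence obtainable via the Carbery--Valdimarsson \cite{cv} reformulation), by a slicing argument. When $k=d$ we have $R^{(d-k)/2}=1$ and the claim \emph{is} the $\mathbb{R}^d$ endpoint inequality, so one may henceforth assume $k<d$.

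First I would select a $k$-plane $V\subset\mathbb{R}^d$ along which to slice. After partitioning each $\Theta_i$ into a bounded number of pieces (depending only on dimension) and treating the pieces separately, one may fix a reference tuple $\theta_1^\circ\in\Theta_1,\dots,\theta_k^\circ\in\Theta_k$ with $V\!ol(\theta_1^\circ,\dots,\theta_k^\circ)\gtrsim\sigma$ and set $V:=\mathrm{span}(\theta_1^\circ,\dots,\theta_k^\circ)$. For each $y\in V^\perp\cap B(0,2)$, the slice $T_i\cap (V+y)$ of any tube $T_i\in \mathcal{T}_i$ is (up to constants) a $k$-dimensional tube inside $V+y$ of width $\asymp R^{-1/2}$, length $\asymp 1$, and axis along the normalized projection $\pi_V(\theta_i)/|\pi_V(\theta_i)|$; this relies on each $\theta_i$ making a bounded angle with $V^\perp$, which may be arranged by refining the preliminary decomposition. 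Since $\pi_V$ restricted to $\mathrm{span}(\theta_1^\circ,\dots,\theta_k^\circ)$ has determinant $\gtrsim\sigma$, the projected direction $k$-tuple $\{\pi_V(\theta_i)\}$ remains $\gtrsim\sigma$-transversal inside $V$.

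Next I would apply the endpoint inequality in each slice $V+y\cong\mathbb{R}^k$: for each $y$,
\begin{equation*}
\Bigl\|\prod_{i=1}^{k}\sum_{T_i\in \mathcal{T}_i}\chi_{T_i\cap (V+y)}\Bigr\|_{L^{1/(k-1)}_V(V\cap B(0,2))}\;\lesssim\;\sigma^{-1}\prod_{i=1}^{k}\sum_{T_i\in \mathcal{T}_i}\mathcal{H}^k\bigl(T_i\cap(V+y)\bigr).
\end{equation*}
Raising this to the $(k-1)$-st power, integrating in $y\in V^\perp\cap B(0,2)$, using Fubini to recognize the left-hand side as the $L^{1/(k-1)}(B(0,2))$ norm to the $(k-1)$-st power (after a trivial H\"older in $y$), and using the elementary identity
\begin{equation*}
\int_{V^\perp\cap B(0,2)}\mathcal{H}^k\bigl(T_i\cap(V+y)\bigr)\,dy\;\lesssim\;R^{(d-k)/2}\,|T_i|
\end{equation*}
on the right-hand side (which captures the $(d-k)/2$ powers of $R$ coming from the $R^{-1/2}$ transverse widths of $T_i$ in the $d-k$ directions orthogonal to $V$), the claim follows after taking the $(k-1)$-st root.

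The main obstacle is the endpoint $k=d$ inequality itself, which is the deep input and is invoked as a black box. The genuinely new content in the reduction is the $\sigma^{-1}$ bookkeeping: normalizing $\pi_V|_{\mathrm{span}(\theta_1^\circ,\dots,\theta_k^\circ)}$ to an orthonormal frame inside $V$ costs a Jacobian of size $\sim\sigma^{-1}$, which is exactly the factor appearing in the final bound. The remaining geometric checks (that slices of tubes are tubes, with the claimed widths and lengths, and that the direction partition costs only a $d$-dependent constant) are routine once a coordinate system adapted to $V$ is fixed.
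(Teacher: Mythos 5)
The paper does not prove this theorem; it simply cites it (Guth for $k=d$, Carbery--Valdimarsson, and Bennett--Carbery--Tao, the last already sufficient since the paper tolerates $\delta^{-\epsilon}$ losses). Your slicing strategy --- project to a $k$-plane $V$ and apply the $\mathbb{R}^k$ endpoint inequality in each fiber --- is the natural route to the lower-linearity case, and a version of it is implicit in the paper's proof of Proposition \ref{confined}; but there the $k$-plane $\Pi$ is \emph{given} by hypothesis and the tube directions are \emph{assumed} to lie in $\Pi+O(\delta)$ and to satisfy the angle condition \eqref{angle}, which is exactly what makes the slices genuine $k$-tubes with $\sigma$-transversal axes.

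For the general statement your argument has a genuine gap at the step ``the projected direction $k$-tuple $\{\pi_V(\theta_i)\}$ remains $\gtrsim\sigma$-transversal inside $V$.'' This is false. Take $d=3$, $k=2$, $\theta^\circ_1=e_1$, $\theta^\circ_2=(\cos\sigma,\sin\sigma,0)$, so $V=\mathrm{span}(e_1,e_2)$, and let $\theta_2=(\cos\sigma,0,\sin\sigma)$. Then $|\theta_2-\theta^\circ_2|=\sqrt2\sin\sigma\ll 1$ (so $\theta_2$ survives any $\sigma$-independent cap decomposition of $\Theta_2$), and $V\!ol(\theta^\circ_1,\theta_2)=\sin\sigma\gtrsim\sigma$, yet $\pi_V(\theta_2)=(\cos\sigma)e_1$ is \emph{parallel} to $\pi_V(\theta^\circ_1)=e_1$, so the projected volume is $0$. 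The point is that the $\sigma$-transversality between nearby $\theta_i$'s can be carried entirely in $V^\perp$, and $\pi_V$ destroys exactly that component; a bounded cap decomposition cannot rule this out when $\sigma$ is small. (Relatedly, the claim that ``$\pi_V$ restricted to $\mathrm{span}(\theta^\circ_1,\dots,\theta^\circ_k)$ has determinant $\gtrsim\sigma$'' is incorrect: $\pi_V$ restricted to $V=\mathrm{span}(\theta^\circ_i)$ is the identity, so its determinant is $1$, not $\sim\sigma$; the $\sigma^{-1}$ must enter elsewhere.) The subsequent Fubini--H\"older step also needs more care --- $k$ factors each raised to $1/(k-1)$ do not H\"older directly, and one has to use that each sliced tube has $k$-area $\lesssim R^{-(k-1)/2}$ supported on $y$ in a set of measure $\sim R^{-(d-k)/2}$ --- but that is repairable, whereas the transversality issue is structural. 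Passing correctly from $k<d$ to the $\mathbb{R}^k$ endpoint is more delicate than a single fixed-plane slicing, which is presumably why the paper quotes the result rather than reproving it.
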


This is a rescaled version of the estimate  due to  Guth \cite{gu} (the case $d=k$) and Carbery-Valdimarsson
\cite{cv} (also see \cite{becata}).  However,  we don't need the endpoint
estimate for our purpose and the estimate in
\cite{becata} is actually  enough because we allow $\delta^{-\epsilon}$ loss in our estimate.

\begin{cor}\label{multi-squarekkk}
Let $\psi\in \fgee$, $\eta\in \mathcal E(N)$, and $0<\delta\ll
\sigma$. Suppose  that \eqref{transverse1} holds whenever $\xi_i\in \supp
\widehat f_i+O(\delta)$, $i=1,2,\dots,k$. Then, if $p\ge 2k/(k-1)$
and $\epsilon_\circ$ is small enough, for $\epsilon>0$, there is an
$N=N(\epsilon)$ such that the following estimate holds with $C,$
$C_\epsilon$, independent of $\psi$ and $\eta$:
\begin{align}
\nonumber &\Big\| \prod_{i=1}^k S_\delta(\psi,\eta)
f_i\Big\|_{L^\frac pk(B(x,\delta^{-1}))}\le
C\sigma^{-C_\epsilon}\delta^{-\epsilon} \prod_{i=1}^k \Big(\delta
\|f_i\|_{2}\Big).
\end{align}
\end{cor}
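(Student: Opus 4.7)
The plan is to reduce the corollary to Proposition \ref{multisq} by introducing auxiliary functions and verifying that the hypotheses of the proposition are satisfied. Specifically, I would define
\[
G_i(x,t) = \phi\!\left(\frac{\eta(D,t)(D_d-\psi(D',t))}{\delta}\right) f_i(x), \qquad i=1,\dots,k,
\]
so that, by the very definition \eqref{srdef}, $S_\delta(\psi,\eta) f_i(x) = \|G_i(x,\cdot)\|_{L^2_t(I)}$. The left-hand side of the desired inequality therefore equals $\|\prod_i \|G_i\|_{L^2_t(I)}\|_{L^{p/k}(B(x,\delta^{-1}))}$, which is precisely the quantity Proposition \ref{multisq} bounds, provided we can verify its two structural hypotheses for the family $\{G_i\}$.

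First, for each fixed $t$ the function $G_i(\cdot,t)$ has Fourier transform
$\widehat{G_i}(\xi,t) = \phi(\eta(\xi,t)(\tau-\psi(\zeta,t))/\delta)\,\widehat{f_i}(\xi)$,
which, since $\phi$ is supported in $2I$ and $1/2 \le \eta \le 1$, is supported in $\{|\tau-\psi(\zeta,t)|\le C\delta\} \cap \supp \widehat{f_i} \subset \Gamma^t(C\delta)$. Thus \eqref{deltanbdg} holds with $\delta$ replaced by $C\delta$, which is harmless. Second, since $\supp \widehat{G_i}(\cdot,t) + O(\delta) \subset \supp \widehat{f_i} + O(C\delta)$, the transversality hypothesis \eqref{transverse1} of Proposition \ref{multisq} is inherited directly from the hypothesis of the corollary (again with constants absorbed into $\gtrsim \sigma$).

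Third, I need the $L^2_{x,t}$ bound $\|G_i\|_{L^2_{x,t}} \lesssim \delta^{1/2}\|f_i\|_2$, which converts the conclusion of Proposition \ref{multisq} into the form stated in the corollary. This is a standard computation: by Plancherel and Fubini,
\[
\|G_i\|_{L^2_{x,t}}^2 = \int_{\mathbb R^d} |\widehat{f_i}(\xi)|^2 \int_I \Big|\phi\Big(\tfrac{\eta(\xi,t)(\tau-\psi(\zeta,t))}{\delta}\Big)\Big|^2 dt\, d\xi.
\]
For each fixed $\xi=(\zeta,\tau)$, the phase $t \mapsto \eta(\xi,t)(\tau-\psi(\zeta,t))/\delta$ has $t$-derivative of size $\sim 1/\delta$ on the support of $\phi$: the contribution from $\partial_t \eta \cdot (\tau-\psi)/\delta$ is $O(1)$ since $|\tau-\psi|\lesssim \delta$ there, whereas the main term $-\eta\,\partial_t\psi/\delta$ is of size $\sim 1/\delta$ by \eqref{ell2} and $1/2\le \eta\le 1$. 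Hence the inner integral is $O(\delta)$, giving $\|G_i\|_{L^2_{x,t}} \lesssim \delta^{1/2}\|f_i\|_2$.

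With these three facts in hand, Proposition \ref{multisq} (valid for $p\ge 2k/(k-1)$ and $\psi\in\fgee$ with $\epsilon_\circ$ small enough and $N$ chosen for the given $\epsilon$) yields
\[
\Big\|\prod_{i=1}^k \|G_i\|_{L^2_t(I)}\Big\|_{L^{p/k}(B(x,\delta^{-1}))} \le C\sigma^{-C_\epsilon}\delta^{-\epsilon} \prod_{i=1}^k \delta^{1/2}\|G_i\|_{L^2_{x,t}} \lesssim C\sigma^{-C_\epsilon}\delta^{-\epsilon} \prod_{i=1}^k \delta\|f_i\|_2,
\]
which is exactly the inequality claimed in Corollary \ref{multi-squarekkk}. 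The reduction is essentially mechanical, so there is no real obstacle; the only point requiring modest care is the $L^2_{x,t}$ calculation, which uses the nondegeneracy $\partial_t\psi \approx 1$ supplied by \eqref{ell2}.
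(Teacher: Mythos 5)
Your proposal is correct and is essentially the same reduction the paper uses: set $G_i = \phi\big(\tfrac{\eta(D,t)(D_d-\psi(D',t))}{\delta}\big)f_i$, check the support and transversality hypotheses of Proposition \ref{multisq}, and use Plancherel together with $\partial_t\psi\approx 1$ from \eqref{ell2} to get $\|G_i\|_{L^2_{x,t}}\lesssim \delta^{1/2}\|f_i\|_2$. The paper states the $L^2_{x,t}$ bound without computation; your detailed verification (the $t$-derivative of the argument of $\phi$ being $\sim 1/\delta$ with consistent sign on its support) correctly fills that in.
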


To show this we need only to replace $G_i$ with
$\phi\big(\frac{\eta(D,t)(D_d-\psi(D',t))}{\delta}\big)f_i$ and
apply Proposition \ref{multisq}. The assumptions in Proposition
\ref{multisq} are satisfied  with $G_1, \dots, G_k$. Thus, the
estimate is straightforward because
$\|\phi\big(\frac{\eta(D,t)(D_d-\psi(D',t))}{\delta}\big)f_i\|_{L_{x,t}^2}\lesssim
\delta^\frac12\|f\|_2$, which follows by Plancherel's theorem and taking $t$-integration first.

The following is a consequence  of Corollary \ref{multi-squarekkk}
and localization argument in the proof of Proposition \ref{multilp}.

\begin{prop}\label{multi-square2}
Let $0<\delta\ll \sigma\ll \widetilde \sigma\ll  1$ and $\psi\in
\fgee$, $\eta\in \mathcal E(N)$ and let $Q_1,\dots, Q_k\subset
\frac12I^d$ be dyadic cubes of sidelength $\widetilde \sigma$.
Suppose that \eqref{transverse1} is satisfied whenever $\xi_i\in
Q_i$, $i=1,\dots, k$, and suppose that $\supp \widehat f_i\subset
Q_i$, $i=1,\dots, k$. Then, if $p\ge 2k/(k-1)$ and $\epsilon_\circ$
is small enough, for $\epsilon>0$ there is an $N=N(\epsilon)$ such
that
\begin{align}
\label{lppp}&\Big\| \prod_{i=1}^k S_\delta (\psi,\eta)
f_i\Big\|_{\frac pk}\le C\sigma^{-C_\epsilon}\delta^{-\epsilon}
\prod_{i=1}^k \Big(\delta^{\frac dp-\frac{d-2}{2}} \|f_i\|_{p}\Big).
\end{align} holds with
$C, C_\epsilon$, independent of $\psi$ and $\eta$.
\end{prop}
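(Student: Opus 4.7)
The plan is to mimic the proof of Proposition \ref{multilp} in the vector valued setting, using Proposition \ref{multisq} (equivalently Corollary \ref{multi-squarekkk}) in place of Theorem \ref{multi-l2}. The device is Stein's trick of spatially localizing each $f_i$ to balls of radius $\delta^{-1-\varepsilon}$, applying the $L^2$-multilinear estimate on each $\delta^{-1}$-ball, then using H\"older's inequality to pass back to $L^p$, while the tail part is controlled by the rapid decay of the convolution kernel defining $S_\delta$.

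Let $\widetilde Q_i = \{\xi : \dist(\xi,Q_i)\le \widetilde c\sigma\}$ and let $\widetilde\chi_i$ be a smooth cutoff equal to $1$ on $Q_i$, supported in $\widetilde Q_i$, with $|\partial^\alpha\widetilde\chi_i|\lesssim\sigma^{-|\alpha|}$. For each $t\in I$ define $K_i^t$ by
\[\mathcal F(K_i^t)(\xi) = \phi\Big(\frac{\eta(\xi,t)(\tau-\psi(\zeta,t))}{\delta}\Big)\widetilde\chi_i(\xi),\]
so that when $\supp\widehat f_i\subset Q_i$ the function inside the $L^2_t$ norm defining $S_\delta f_i$ is $K_i^t*f_i$. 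By Lemma \ref{kerneldelta1}, $|K_i^t(x)|\lesssim\delta\,\fK_M(x)$ uniformly in $\psi\in\fgee$, $\eta\in\cE(N)$, $t\in I$, with $M=M(N)$. Cover $\mathbb R^d$ by boundedly overlapping balls $\{\cB\}$ of radius $\delta^{-1}$, and for $\cB=B(a,\delta^{-1})$ set $\widetilde\cB=B(a,\delta^{-1-\varepsilon})$. Writing $f_i=\chi_{\widetilde\cB}f_i+\chi_{\widetilde\cB^c}f_i$ and expanding the product gives
\[\Big\|\prod_{i=1}^k S_\delta f_i\Big\|_{p/k}^{p/k}\lesssim I+II,\]
where $I=\sum_\cB\int_\cB\prod_i\|K_i^t*(\chi_{\widetilde\cB}f_i)\|_{L^2_t(I)}^{p/k}$ is the principal term and $II$ collects the cross terms containing at least one tail piece $\chi_{\widetilde\cB^c}f_i$.

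For $I$, note $\mathcal F(K_i^t*(\chi_{\widetilde\cB}f_i))\subset\Gamma^t(\delta)\cap\widetilde Q_i$; by continuity (and $\delta\ll\sigma\ll\widetilde\sigma$) the transversality \eqref{transverse1} persists for these enlarged supports once $\widetilde c$ is small. Proposition \ref{multisq} applied on the ball $\cB$, combined with the Plancherel bound $\|K_i^t*g\|_{L^2_{x,t}(\mathbb R^d\times I)}\lesssim\delta^{1/2}\|g\|_2$ (which uses that for each fixed $\xi$ the Fourier symbol is non-zero on a $t$-set of measure $\lesssim\delta$), yields the per-ball estimate $\sigma^{-C_\varepsilon}\delta^{-\varepsilon}\prod_i\delta\|\chi_{\widetilde\cB}f_i\|_2$. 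Summing over $\cB$ via multilinear H\"older, then using $\|\chi_{\widetilde\cB}f_i\|_2\lesssim\delta^{-(1+\varepsilon)d(1/2-1/p)}\|\chi_{\widetilde\cB}f_i\|_p$ and the bounded overlap of $\{\widetilde\cB\}$, a direct exponent count gives $I^{k/p}\lesssim\sigma^{-C_\varepsilon}\delta^{-c\varepsilon}\prod_i\delta^{d/p-(d-2)/2}\|f_i\|_p$.

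For $II$, the standard decay tradeoff $|K_i^t(x-y)|\lesssim\delta^{1+\varepsilon(M-d-1)}\fK_{d+1}(x-y)$ for $x\in\cB$, $y\in\widetilde\cB^c$, together with the baseline $\|K_i^t*g\|_{L^2_t(I)}\lesssim\delta\,\fK_{d+1}*|g|$, yields
\[II\lesssim\delta^{p+\varepsilon(M-d-1)p/k}\int_{\mathbb R^d}\prod_{i=1}^k(\fK_{d+1}*|f_i|)^{p/k}\,dx,\]
since every surviving product in $II$ contains at least one tail factor. Multilinear H\"older and Young's convolution inequality $\|\fK_{d+1}*g\|_p\lesssim\delta^{-d}\|g\|_p$ then give $II^{k/p}\lesssim\delta^{\varepsilon(M-d-1)-c_1}\prod_i\|f_i\|_p$, which is dominated by the target bound provided $N$ (hence $M$) is chosen large enough depending on $\varepsilon, d, k$; taking $\varepsilon=\epsilon/c$ yields \eqref{lppp}. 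The main challenge is not conceptual but careful exponent bookkeeping, ensuring the cumulative $\varepsilon$-losses stay within the allowed $\delta^{-\epsilon}$ and that all constants are uniform in $\psi\in\fgee$, $\eta\in\cE(N)$ — both of which are inherited directly from the corresponding uniformity in Lemma \ref{kerneldelta1} and Proposition \ref{multisq}.
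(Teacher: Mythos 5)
Your proof is correct and follows essentially the same route as the paper's: both use the $T_\delta$-localization scheme from Proposition \ref{multilp} (the kernels $K_i^t$, the balls $\cB$ and $\widetilde\cB$, the main/tail split into $I$ and $II$), apply Corollary \ref{multi-squarekkk}/Proposition \ref{multisq} on each $\delta^{-1}$-ball to control $I$, and use the kernel decay from Lemma \ref{kerneldelta1} for $II$; the exponent bookkeeping likewise matches, with the $\delta^{1/2}$ extra gain from the $L^2_t$ Plancherel bound producing the $(d-2)/2$ in place of $(d-1)/2$.
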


\begin{proof} The proof is similar to that of Proposition \ref{multilp}. So, we shall be brief.
Let $\varphi$, $\widetilde Q_i$, $\widetilde \chi_i$, $\{\cB\}$, and
$\{\widetilde \cB\}$ be the same as in  the proof of Proposition
\ref{multilp}. We set
\[  K_i^t = \mathcal F^{-1}\Big(\phi\Big(\frac{\eta(\xi,t)(\tau-\psi(\zeta,t))}{\delta}\Big)\widetilde \chi_i(\xi)\Big)\,.\] 
Then  $S_\delta (\psi,\eta) f_i= \| K_i^t \ast f_i\|_{L^2_t(I)}.$ The $p/k$-th
power of the left hand side of \eqref{lppp} is bounded by
\begin{align*}
\sum_{\cB}\int_{\cB} \prod_{i=1}^k \| K_i^t \ast
f_i\|_{L^2_t(I)}^\frac pk dx  & \lesssim    I+I\!I, 
\end{align*}
where 
\[   I=\sum_{\cB}\int_{\cB} \prod_{i=1}^k
\| K_i^t \ast (\chi_{\widetilde \cB} f_i)\|_{L^2_t(I)}^\frac pk\, dx, \ \  I\!I=   
\sum_{\cB}\,\,\,\Big( \sum_{\substack{
g_i= \chi_{\widetilde \cB^c} f_i \text{ for some } i} } \int_{\cB}
\prod_{i=1}^k
 \| K_i^t \ast g_i\|_{L^2_t(I)}^\frac pk
dx\Big).\] 
As before, the second sum is taken over all choices with $ g_i= \chi_{\widetilde
\cB}
f_i \text{ or } \chi_{\widetilde \cB^c} f_i, $ and  $g_i= \chi_{\widetilde \cB^c} f_i$  for some $i$. 
By choosing $c>0$ small enough, we see that $ \widetilde \chi_1(D)
(\chi_{\widetilde \cB} f_1)$, $\dots$, $ \widetilde \chi_k(D)
(\chi_{\widetilde \cB} f_k)$ satisfy the assumption of Corollary
\ref{multi-squarekkk}. Since $K_i^t \ast(\chi_{\widetilde \cB}
f_i))=\phi\big(\frac{\eta(D,t)(D_d-\psi(D',t))}{\delta}\big)\widetilde
\chi_i(D) (\chi_{\widetilde \cB} f_i)$,  by Corollary
\ref{multi-squarekkk} and H\"older's inequality
\begin{align*}
 I\,\lesssim
\sigma^{-C_\eps}\big(\frac1\delta\big)^{\eps} \sum_{\cB}
\prod_{i=1}^k \delta^\frac{p}{k} \big\|\chi_{\widetilde \cB}
f_i\big\|_2^\frac
pk \lesssim \sigma^{-C_\eps}\big(\frac1\delta\big)^{c\eps}
\Big(\prod_{i=1}^k \delta^{\frac dp-\frac{d-2}2 }\big\|
f_i\big\|_p\Big)^\frac pk.
\end{align*} 
To handle $I\!I$ we note from 
Lemma \ref{kerneldelta1} that  $|K_i^t(x)| \le C\delta\fK_M(x)$ with $C$,
depending only on $\|\psi\|_{C^N(I^{d-1})},$ $
\|\eta\|_{C^N(I^{d})}$. Thus, $\|K_i^t\ast(\chi_{\widetilde \cB^c}
f_i)(x) \|_{L^2_t}\le C\delta\delta^{\eps (M-d-1)} \fK_{d+1}\ast
|f_i|(x)$ if $x\in B$, and  $\|K_i\ast f_i(x)\|_{L^2_t(I)}\le C\delta
\fK_{d+1}\ast |f_i|(x)$. The rest of proof is the same as before. We
omit the details.
\end{proof}


\subsection{Multilinear square function estimate with confined direction sets }
From the point view of  Proposition \ref{confined}
we may expect  a better estimate thanks to smallness of  supports of Fourier transforms  of the  input functions 
when they are confined in a small neighborhood of a $k$-dimensional submanifold.
The following is a vector valued generalization of Proposition \ref{confined}.

\begin{prop}\label{confinedsqr} Let  $k$, $2\le k\le d$, be an integer, $0<\sigma \ll 1$ be fixed,
and\, $\Pi\subset \mathbb R^d$ be a $k$-plane  containing the
origin. Let $\psi\in\fgee$  and
$\Gamma^{t}$ be 
defined by \eqref{gamma}. For $0<\delta\ll \sigma$, suppose that the
functions $G_1,\dots, G_k$ defined on $\mathbb R^d\times I$ satisfy \eqref{deltanbdg} for $t\in I$ and
\eqref{transverse1} whenever $\xi_i\in \supp\, \mathcal F(G_i(\cdot, t))+O(\delta), \,\, i=1,2,\dots,k$, for  some $t\in I$. Additionally we assume that, for all $t\in I$,  \Be
\label{confinedsq} \mbn\big(\supp\, \widehat G_1(\cdot, t)\big),\dots,
\mbn\big(\supp\, \widehat G_k(\cdot, t)\big) \subset \mathbb S^{d-1}\cap
(\Pi+O(\delta)).\Ee
 Then, if $2\le p\le 2k/(k-1)$ and $\epsilon_0$ is sufficiently small, for $\epsilon>0$
there is an $N=N(\epsilon)$ such that  \Be
\label{squarel2} \Big\| \prod_{i=1}^k
\|G_i\|_{L^2_t(I)}\Big\|_{L^\frac pk(B(x,\delta^{-1}))}\lesssim
\sigma^{-C_\epsilon} \delta^{dk(\frac12-\frac1p)-\epsilon}
\prod_{i=1}^k \|G_i\|_{L^2_{x,t}} \Ee
holds uniformly for $\psi\in \fgee$.
\end{prop}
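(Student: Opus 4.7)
For $p=2$ the inequality is immediate from H\"older's inequality and Plancherel's theorem, so by interpolation it suffices to establish \eqref{squarel2} at $p=2k/(k-1)$, where the target bound reads $\sigma^{-C_\epsilon}\delta^{d/2-\epsilon}\prod\|G_i\|_{L^2_{x,t}}$. The plan is to adapt the induction-on-scale scheme from the proof of Proposition \ref{confined}, fused with the time-decomposition and $L^2$-orthogonality machinery developed in the proof of Proposition \ref{multisq}.

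Setting $R=\delta^{-1}$ and $x=0$, I would make the inductive assumption that, for some $\alpha>0$,
\[
\Big\| \prod_{i=1}^k \|G_i\|_{L^2_t(I)}\Big\|_{L^{2/(k-1)}(B(0,R))} \lesssim \delta^{-\alpha}\,\delta^{d/2}\prod_{i=1}^k \|G_i\|_{L^2_{x,t}}
\]
holds uniformly for $\psi\in\fgee$ under the hypotheses \eqref{deltanbdg}, \eqref{transverse1}, \eqref{confinedsq}; by \eqref{trivial} this is true for large $\alpha$. The aim is to upgrade the exponent to $\alpha/2+c\epsilon$, and then iterate exactly as in the passage from \eqref{l22} to \eqref{l2222} to drive $\alpha$ down to $O(\epsilon)$. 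For the upgrade, I would localize to balls $B(z,\sqrt R)$: since $\rho_{B(z,\sqrt R)}G_i(\cdot,t)$ has Fourier transform in $\Gamma^t(\sqrt\delta)\cap \mathrm{N}^{-1}(\Pi+O(\sqrt\delta))$ and the transversality \eqref{transverse1} is preserved at that scale, the inductive hypothesis at scale $\sqrt\delta$ yields
\[
\Big\| \prod_{i=1}^k \|\rho_{B(z,\sqrt R)}G_i\|_{L^2_t(I)}\Big\|_{L^{2/(k-1)}(B(z,\sqrt R))} \lesssim \delta^{-\alpha/2}\,\delta^{d/4}\prod_{i=1}^k \|\rho_{B(z,\sqrt R)}G_i\|_{L^2_{x,t}}.
\]

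Next, I would decompose $G_i=\sum_\mbq G_{i,\mbq}$ as in \eqref{decomp2}, with $\mbq$ ranging over dyadic cubes of sidelength $\sim\sqrt\delta$, and use Plancherel to replace the right-hand side by the corresponding square function. Following the time-slicing from the proof of Proposition \ref{multisq}, each $G_{i,\mbq}$ is supported in $t$ in an interval $I_{i,\mbq}$ of length $\sim\sqrt\delta$ by \eqref{ell2}, which is subdivided into sub-intervals $I_{i,\mbq}^l$ of length $\sim\delta$. For $t\in I_{i,\mbq}^l$, the combined effect of the cube $\mbq$, the $O(\delta)$-neighborhood of $\Gamma^{t_l}$, and the confinement \eqref{confinedsq} places the Fourier support of $G_{i,\mbq}(\cdot,t)$ inside a parallelepiped $\mbr_{i,\mbq}^l$ with $k-1$ sides of length $\sqrt\delta$ (tangent to $\mathrm{N}^{-1}(\Pi)$ within $\Gamma^{t_l}$) and $d-k+1$ sides of length $\delta$ (the normal to $\Gamma^{t_l}$ together with the $d-k$ surface directions perpendicular to $\mathrm{N}^{-1}(\Pi)$). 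Factoring $G_{i,\mbq}=G_{i,\mbq}^l*K_{i,\mbq}^l$ with $|K_{i,\mbq}^l|\lesssim |\mbT_{i,\mbq}^l|^{-1}\chi_{R\mbT_{i,\mbq}^l}$, where $\mbT_{i,\mbq}^l$ is the dual tube having $d-k+1$ long sides of length $R$ and $k-1$ short sides of length $\sqrt R$, and then repeating the local-to-global argument from \eqref{easy1}--\eqref{easy2} in the proof of Proposition \ref{multisq}, the estimate reduces to a multilinear Kakeya inequality for the collection $\{\mbT_{i,\mbq}^l\}$.

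The final step, and the main obstacle, is verifying that this reduced inequality can be settled by a $k$-dimensional Kakeya estimate, exactly in the spirit of \eqref{multi-kakeya0}. I would slice in $\Pi^\perp$: the cross-section of $\mbT_{i,\mbq}^l$ with any translate of $\Pi$ is, up to bounded factors, a $k$-dimensional tube of length $\sim R$ in the $\mbn_{i,\mbq}^l$ direction (which lies in $\Pi+O(\delta)$ by \eqref{confinedsq}) and width $\sim\sqrt R$ in the remaining $k-1$ directions transverse to $\mbn_{i,\mbq}^l$ inside $\Pi$. The transversality \eqref{transverse1} together with the angle estimate carried over verbatim from \eqref{angle}--\eqref{ttt} guarantees $V\!ol$-separation $\gtrsim\sigma$ of the $k$ projected directions inside $\Pi\cong\mathbb R^k$, so Theorem \ref{k-kakeya} applied in $\mathbb R^k$ closes the estimate, the $L^\infty$ slab in $\Pi^\perp$ contributing a factor $R^{(d-k)/2}$ and Kakeya supplying the $\sigma^{-1}$. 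The delicate point is the bookkeeping for the parallelepiped geometry under the three simultaneous constraints (cube, $\Gamma^{t_l}$-neighborhood, confinement to $\mathrm{N}^{-1}(\Pi+O(\delta))$) together with the $t$-dependence of the surfaces $\Gamma^t$; uniformity of \eqref{angle} across $\psi\in\fgee$ and $t\in I$ is handled by continuity and compactness as at the end of the proof of Proposition \ref{confined}.
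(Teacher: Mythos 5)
Your proposal is correct and follows essentially the same route as the paper's proof: interpolation to reduce to $p=2k/(k-1)$, the Bennett--Carbery--Tao induction on scale via $\sqrt R$-balls, the $R^{-1/2}$-cube frequency decomposition with time-slicing into $\sim R^{-1}$-intervals as in Proposition \ref{multisq}, the parallelepiped/dual-tube geometry arising from the three simultaneous constraints, and finally the slicing in $\Pi^\perp$ together with the angle estimate \eqref{angle} and the $k$-dimensional multilinear Kakeya theorem. The only slip is a minor scaling typo in the kernel bound ($|K_{i,\mbq}^l|\lesssim|R\mbT_{i,\mbq}^l|^{-1}\chi_{R\mbT_{i,\mbq}^l}$ with $\mbT$ taken at unit scale, rather than writing both $|\mbT|^{-1}$ and $\chi_{R\mbT}$), which does not affect the argument.
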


The following   is an easy consequence of
\eqref{squarel2}.

\begin{cor}\label{sq}
Let $\{\fq\}$, $\fq\subset \frac12 I^d$,  be the collection of dyadic cubes of side length $l$, $\delta< l\le 2\delta$. Define $ G_{i,\fq}$ by $\mathcal F( {G_{i,\fq}(\cdot, t)})=\chi_\fq \mathcal F( {G_{i}(\cdot, t)})$
and set $R=1/\delta$. Suppose that
the same assumptions as in Proposition \ref{confinedsqr} are satisfied.  Then,
if $2\le p\le 2k/(k-1)$ and $\epsilon_\circ$ is small enough, for $\epsilon>0$
there is an $N=N(\epsilon)$ such that
\begin{equation}
\label{squarefunt21} \Big\| \prod_{i=1}^k
\|G_i\|_{L^2_t(I)}\Big\|_{L^{\frac pk}(B(x,R))}\lesssim
\sigma^{-C_\epsilon} \delta^{-\epsilon} \prod_{i=1}^k
\Big\|\Big(\sum_{\fq } \| G_{i,\fq}\ltwo
^2\Big)^\frac12\rho_{B(x,R)}\Big\|_{p}
\end{equation}
holds uniformly for $\psi\in \fgee$. 
\end{cor}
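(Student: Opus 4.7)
The plan is to deduce the corollary from Proposition~\ref{confinedsqr} via a spatial localization, Plancherel/near-orthogonality in $x$, and a local-constancy step that converts $L^2$ to $L^p$.

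First, since $\rho_{B(x,R)}\geq 1$ on $B(x,R)$, set $\widetilde G_i:=\rho_{B(x,R)}G_i$. Because $\widehat{\rho_{B(x,R)}}$ is supported in $\fq(0,\delta)$, the spatial Fourier support of $\widetilde G_i(\cdot,t)$ lies in $\Gamma^t(\delta)+\fq(0,\delta)\subset \Gamma^t(C\delta)$, so, after enlarging constants harmlessly, the tuple $\widetilde G_1,\dots,\widetilde G_k$ satisfies the hypotheses of Proposition~\ref{confinedsqr}. Applying that proposition gives
\[
\Big\|\prod_{i=1}^k \|G_i\|_{L^2_t(I)}\Big\|_{L^{p/k}(B(x,R))} \lesssim \sigma^{-C_\epsilon}\delta^{dk(\frac12-\frac1p)-\epsilon}\prod_{i=1}^k \|\widetilde G_i\|_{L^2_{x,t}}.
\]

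Next, write $\widetilde G_i = \sum_\fq \rho_{B(x,R)}G_{i,\fq}$. Each summand has spatial Fourier support in $\fq+\fq(0,\delta)\subset 2\fq$, and since the dilated cubes $2\fq$ have bounded overlap, Plancherel in $x$ (for each fixed $t$) followed by integration in $t$ gives
\[
\|\widetilde G_i\|_{L^2_{x,t}}^2 \lesssim \sum_\fq \|\rho_{B(x,R)}G_{i,\fq}\|_{L^2_{x,t}}^2 = \int \rho_{B(x,R)}^2(y)\,F_i(y)^2\,dy = \|\rho_{B(x,R)}F_i\|_{L^2}^2,
\]
where $F_i(y):=\big(\sum_\fq \|G_{i,\fq}(y,\cdot)\|_{L^2_t(I)}^2\big)^{1/2}$.

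Finally, observe that for each $\fq$ and each $t$ the function $|G_{i,\fq}(\cdot,t)|^2$ has (spatial) Fourier support in $\fq-\fq\subset \fq(0,2\delta)$, and hence so does $F_i^2$. Therefore Lemma~\ref{scattered} applied to $F_i^2$ with parameter $2\delta$ shows that $F_i$ is essentially constant on balls of radius $R=1/\delta$; combined with the rapid decay of $\rho_{B(x,R)}$ outside $B(x,R)$ and a standard Schwartz-tail summation, this yields
\[
\|\rho_{B(x,R)}F_i\|_{L^2}\lesssim \delta^{-d(\frac12-\frac1p)}\|\rho_{B(x,R)}F_i\|_{L^p}.
\]
Chaining the three estimates, the powers $\delta^{\pm dk(\frac12-\frac1p)}$ cancel and the claimed bound $\sigma^{-C_\epsilon}\delta^{-\epsilon}\prod_i\|\rho_{B(x,R)}F_i\|_{L^p}$ follows. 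The main technical point will be the last step: one must carefully invoke Lemma~\ref{scattered} on $F_i^2$ and absorb the Schwartz tails of $\rho_{B(x,R)}$ so that only an acceptable $\delta^{-\epsilon}$ is lost in the $L^2\to L^p$ transition.
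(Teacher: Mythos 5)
Your first two steps---smooth localization via $\rho_{B(x,R)}$ and near-orthogonality across the cubes $\fq$---reproduce the paper's argument exactly. The gap is in the last step. You invoke Lemma~\ref{scattered} applied to $F_i^2$ to argue local constancy at scale $R$ and then worry, correctly, that converting the scattered modulation sum $[F_i^2]_{2\delta}$ (a weighted sum of translates, not a pointwise bound) into a clean $L^2\to L^p$ statement requires delicate bookkeeping of Schwartz tails, possibly costing $\delta^{-\epsilon}$. In fact none of that machinery is needed: the inequality
\[
\|\rho_{B(x,R)}F_i\|_{L^2}\lesssim R^{d(\frac12-\frac1p)}\|\rho_{B(x,R)}F_i\|_{L^p}
\]
holds for \emph{every} nonnegative function $F_i$, with no loss and with no Fourier-support hypothesis, by a single application of H\"older's inequality. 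Write $|\rho_{B(x,R)}|^2=\big(|\rho_{B(x,R)}|^{2/p}\big)^2\,|\rho_{B(x,R)}|^{2-4/p}$, apply H\"older with exponents $p/2$ and $p/(p-2)$, and note that $(2-\tfrac4p)\cdot\tfrac{p}{p-2}=2$, so the conjugate factor equals $\|\rho_{B(x,R)}\|_{L^2}^{2(1-2/p)}\sim R^{d(1-2/p)}$. The resulting power $R^{d(\frac12-\frac1p)}=\delta^{-d(\frac12-\frac1p)}$ cancels exactly the per-factor gain $\delta^{d(\frac12-\frac1p)}$ supplied by Proposition~\ref{confinedsqr}, yielding \eqref{squarefunt21} with no additional $\epsilon$-loss. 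This is precisely what the paper does; the local-constancy detour you propose is unnecessary and, as you yourself anticipated, would be substantially harder to make rigorous.
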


 \begin{proof} Observe that
$ \big\| \prod_{i=1}^k   \|G_i\|_{L^2_t(I)}\big\|_{L^\frac
pk( B(x,R))}\le \big\| \prod_{i=1}^k
\|\rho\big(\frac{\cdot-x}{R}\big) G_i\|_{L^2_t(I)}\big\|_{L^\frac
pk}.$ Then, the functions $\rho\big(\frac{\cdot-x}{R}\big)
G_i$, $i=1,\dots, k$, satisfy the assumption in Proposition
\ref{confinedsqr} because $\supp\, \mathcal F(\rho\big(\frac{\cdot-x}{R}\big)
G_i(\cdot, t))$ $=\supp\,  \widehat G(\cdot, t)+O(R^{-1})$. So, from  Proposition
\ref{confinedsqr} we get
\[
\Big\| \prod_{i=1}^k   \|G_i\|_{L^2_t(I)}\Big\|_{L^\frac
pk(B(x,R))}\lesssim \sigma^{-C_\epsilon}
    R^{\epsilon} \prod_{i=1}^k
R^{-d(\frac12-\frac1p)}\big\|\| \rho\big(\frac{\cdot-x}{R}\big)
G_{i}\|_{L_{}^2} \big\|_{L^2_t(I)}.\] Since $G_i=\sum_{\fq}
G_{i,\fq}$ and supports of $\{\mathcal
F(\rho\big(\frac{\cdot-x}{R}\big) G_{i, \fq}(\cdot, t))\}_\fq$ are
boundedly overlapping, by Plancherel's theorem it follows that
$\big\|\| \rho\big(\frac{\cdot-x}{R}\big)
G_{i}\|_{L_{x}^2} \big\|_{L^2_t(I)}\lesssim \big\|\big(\sum_{\fq }
\| \rho\big(\frac{\cdot-x}{R}\big)
G_{i,\fq}\|_2^2\big)^\frac12\big\|_{L^2_t(I)}.$ Combining this with
the above inequality, we get
\begin{align*}
\Big\| \prod_{i=1}^k   \|G_i\|_{L^2_t(I)}\Big\|_{L^\frac
pk(B(x,R))}\lesssim \sigma^{-C_\epsilon}
    R^{\epsilon}  \prod_{i=1}^k
R^{-d(\frac12-\frac1p)}\Big\||\rho\big(\frac{\cdot-x}{R}\big)|\Big(\sum_{\fq } \|
G_{i,\fq}\ltwo ^2\Big)^\frac12\Big\|_{2}.
\end{align*}
Now H\"older's inequality gives the desired estimate \eqref{squarefunt21}.
\end{proof}

As an application of Corollary  \ref{sq} we obtain the following.

\begin{cor}\label{squarefunt222}
Let $\psi\in \fgee$, $\eta\in \mathcal E(N)$, $0<\delta\ll
\widetilde \sigma\ll \sigma$, and
${S_{\delta}}={S_{\delta}}(\psi,\eta)$ be defined by \eqref{srdef}.
Let $\Pi$ be a $k$-plane which contains the origin. Suppose
\eqref{transverse1} holds whenever $\xi_i\in \supp\,\widehat
f_i+O(\widetilde \sigma)$, $i=1,2,\dots,k,$ and \Be
\label{angleconf}\mbn\big(\supp\,\widehat f_i\big)\subset
\Pi+O(\widetilde \sigma), \quad i=1,2,\dots,k.\Ee Let $\{\fq \}$, $\fq\in \frac12 I^d$,  be
the  collection of dyadic cubes of side length $l$,
 ${\widetilde \sigma}< l\le 2{\widetilde \sigma}$.
 Define $f_{i,\fq}$ by $\mathcal F( {f_{i,\fq}})=\chi_\fq \mathcal F( {f_{i}})$.
Then, if  $2k/(k-1)\le p\le 2$ and $\epsilon_\circ$ is sufficiently small, for $\epsilon>0$
there is an $N=N(\epsilon)$ such that
\[\Big\| \prod_{i=1}^k S_{\delta}f_i\Big\|_{L^\frac pk(B(x,1/\widetilde \sigma))}
\lesssim \sigma^{-C_\epsilon}{\widetilde \sigma}^{-\epsilon}
\prod_{i=1}^k \Big\| \Big(\sum_{\fq} |S_{\delta}
f_{i,\fq}|^2\Big)^\frac12 \rho_{B(x,1/\widetilde\sigma)}
 \Big\|_{L^p}\]
 holds uniformly for $\psi$ and $\eta$.
 \end{cor}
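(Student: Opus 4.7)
The plan is to reduce the statement directly to Corollary \ref{sq}, applied not at the native scale $\delta$ of the operator $S_\delta$ but at the coarser scale $\widetilde\sigma$ governing the cubes $\fq$ and the direction confinement. Set
\[G_i(x,t):=\phi\Big(\frac{\eta(D,t)(D_d-\psi(D',t))}{\delta}\Big)f_i(x),\]
so that $S_\delta f_i(x)=\|G_i(x,\cdot)\|_{L^2_t(I)}$. For each fixed $t\in I$ the Fourier transform $\widehat{G_i}(\cdot,t)$ is supported in $\Gamma^t(\delta)\cap\supp\widehat f_i$, which, since $\delta\ll\widetilde\sigma$, is contained in $\Gamma^t(\widetilde\sigma)$. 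Thus the $G_i$ satisfy the frequency-support hypothesis \eqref{deltanbdg} of Proposition \ref{confinedsqr} with the role of $\delta$ there played by $\widetilde\sigma$.

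Next I would verify that the transversality and confinement hypotheses of Corollary \ref{sq} (and hence of Proposition \ref{confinedsqr}) hold at scale $\widetilde\sigma$. The inclusion $\supp\widehat{G_i}(\cdot,t)\subset\supp\widehat f_i$ gives $\supp\widehat{G_i}(\cdot,t)+O(\widetilde\sigma)\subset \supp\widehat f_i+O(\widetilde\sigma)$, so the assumed transversality of $\mathbf n(\xi_i)$ for $\xi_i\in\supp\widehat f_i+O(\widetilde\sigma)$ transfers to the $G_i$. Likewise the confinement assumption \eqref{angleconf} yields $\mathbf n(\supp\widehat{G_i}(\cdot,t))\subset \Pi+O(\widetilde\sigma)$, which is exactly \eqref{confinedsq} with parameter $\widetilde\sigma$. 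Applying Corollary \ref{sq} at scale $\widetilde\sigma$ (so that $R=1/\widetilde\sigma$ and the cubes $\fq$ have side length $\widetilde\sigma<l\le 2\widetilde\sigma$) gives
\[\Big\|\prod_{i=1}^k\|G_i\|_{L^2_t(I)}\Big\|_{L^{p/k}(B(x,1/\widetilde\sigma))}\lesssim \sigma^{-C_\epsilon}\widetilde\sigma^{-\epsilon}\prod_{i=1}^k\Big\|\Big(\sum_\fq\|G_{i,\fq}\|_{L^2_t(I)}^2\Big)^{1/2}\rho_{B(x,1/\widetilde\sigma)}\Big\|_p,\]
where $G_{i,\fq}$ is defined by $\mathcal F(G_{i,\fq}(\cdot,t))=\chi_\fq\mathcal F(G_i(\cdot,t))$.

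To finish, I would observe that the cutoff $\chi_\fq$ and the multiplier $\phi\big(\eta(\xi,t)(\tau-\psi(\zeta,t))/\delta\big)$ are both pointwise multipliers in $\xi$, hence commute. Therefore
\[\mathcal F(G_{i,\fq}(\cdot,t))(\xi)=\chi_\fq(\xi)\,\phi(\cdots)\widehat f_i(\xi)=\phi(\cdots)\widehat{f_{i,\fq}}(\xi),\]
which identifies $\|G_{i,\fq}(x,\cdot)\|_{L^2_t(I)}=S_\delta f_{i,\fq}(x)$. Substituting into the display above yields exactly the claimed inequality.

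There is no substantive obstacle here, beyond careful bookkeeping of the scale at which Corollary \ref{sq} is invoked: the operator $S_\delta$ is associated with scale $\delta$, but the confinement and the cube decomposition live at the strictly coarser scale $\widetilde\sigma$, and the whole point of the hypothesis $\delta\ll\widetilde\sigma$ is that $\Gamma^t(\delta)\subset\Gamma^t(\widetilde\sigma)$ so one may freely apply the coarser-scale version of the multilinear square function estimate. The commutation of $\chi_\fq$ with the frequency multiplier makes the passage from $G_{i,\fq}$ to $S_\delta f_{i,\fq}$ completely automatic.
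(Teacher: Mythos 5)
Your proof is correct and takes essentially the same route as the paper: reduce to Corollary \ref{sq} with the role of $\delta$ there played by $\widetilde\sigma$, using $\delta\ll\widetilde\sigma$ to place $\supp\widehat{G_i}(\cdot,t)\subset\Gamma^t(\delta)\subset\Gamma^t(\widetilde\sigma)$ and transferring the transversality and confinement hypotheses verbatim. Your choice $G_i=\phi\big(\eta(D,t)(D_d-\psi(D',t))/\delta\big)f_i$ is in fact a small improvement over the paper's $G_i=\rho(\widetilde\sigma(\cdot-x))\phi(\cdots)f_i$: since the frequency cutoff $\chi_{\fq}$ commutes with the multiplier, $\|G_{i,\fq}\|_{L^2_t(I)}=S_\delta f_{i,\fq}$ exactly, whereas the paper's extra spatial cutoff convolves the Fourier supports with a bump at scale $\widetilde\sigma$ so that $G_{i,\fq}$ is no longer a clean truncation of $S_\delta f_i$ and would require one more (routine but unstated) step to recover $S_\delta f_{i,\fq}$ on the right-hand side; that cutoff is also superfluous, since Corollary \ref{sq} already incorporates the localization to $B(x,R)$ internally via $\rho_{B(x,R)}$.
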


This follows from Corollary \ref{sq}. Indeed, it suffices to check that $G_i=\rho\big(\widetilde\sigma({\cdot-x})\big)$
$\phi\big(\frac{D_d-\psi(D',t)}{\sigma}\big)f_i$ satisfies the assumption of Corollary \ref{sq} with $\delta=\widetilde \sigma$ as long as $\sigma \ll \widetilde\sigma$. This is clear because 
$\widehat{G_i}(\cdot, t)=
 \widetilde \sigma^{-d}\big( e^{i<\cdot, x>}\rho\big( \cdot/\widetilde \sigma\big)\big)
 \ast \big( \phi\big(\frac{\tau-\psi(\zeta,t)}{\sigma}\big)\widehat f_i\,\big)$.

\begin{proof}[Proof of Proposition \ref{confinedsqr}]
The argument here is similar to the proof of Proposition \ref{multisq}.
The estimate for $p=2$ follows from H\"older's  inequality and Plancherel's theorem.
So, by interpolation it is
sufficient to show \eqref{squarel2} for $p=2k/(k-1)$.

Let us set $R=1/\delta\gg 1$ and we may set $x=0$. As usual we start
with the assumption that, for $0<\delta\ll \sigma$,
\Be\label{assume11} \Big\| \prod_{i=1}^k
\|G_i\|_{L^2_t(I)}\Big\|_{L^\frac pk(B(0,R))}\le C R^\alpha
R^{-\frac d2} \prod_{i=1}^k \|G_i\|_{L^2_{x,t}}\Ee holds uniformly
for $\psi\in \fgee$  whenever $G_1,\dots, G_k$ satisfy
\eqref{deltanbdg}, \eqref{transverse1} and \eqref{confinedsq}. By
\eqref{trivial} and H\"older's inequality \eqref{assume11} is true
with some large $\alpha$.  As before it is sufficient to show that
\eqref{assume11} implies  for any $\eps>0$ there is an $N=N(\eps)$
such that
\[ \Big\|
\prod_{i=1}^k \|G_i\|_{L^2_t(I)}\|_{L^\frac pk(B(0,R))}\le
C\sigma^{-\kappa} R^{\frac\alpha 2+c\eps} R^{-\frac d2}
\prod_{i=1}^k \|G_i\|_{L^2_{x,t}}
\]
holds uniformly for $\psi\in \fgee$. Then
iteration of this implication gives the desired estimate
\eqref{squarel2}. 

Fix $z\in \mathbb R^{d}$ and consider $\rho_{B(z,\sqrt R)}
G_1(\cdot, t), \dots, \rho_{B(z,\sqrt R)} G_k(\cdot, t)$. Then it is
clear from \eqref{deltanbdg} and \eqref{confinedsq} that $\supp\, \cF(\rho_{B(z,\sqrt R)} G_i(\cdot, t))$ is contained in
$\Gamma^{t}+O(R^{-1/2})$ and
 $\mbn(\supp\, \mathcal F(\rho_{B(z,\sqrt R)} G_i(\cdot, t)))\subset  \Pi+O(R^{-1/2})$. Also,
 since $\delta\ll \sigma$, \eqref{transverse1} holds if $\xi_i\in
 \supp\, \mathcal F(\rho_{B(z,\sqrt R)} G_i(\cdot, t))$.  Hence, by the assumption \eqref{assume11} we get
\Be\label{halfsq} 
\Big \| \prod_{i=1}^k \|\rho_{B(z,\sqrt R)}
G_i\|_{L^2_t(I)}\Big\|_{L^\frac 2{k-1}}\lesssim  R^\frac\alpha 2
R^{-\frac d4} \prod_{i=1}^k \|\rho_{B(z,\sqrt R)}
G_i\|_{L^2_{x,t}}.\Ee

Now we proceed in the same way as in the proof of Proposition
\ref{multisq}, and we keep using the same notations.
As before,  let
 $\{\mbq\,\}$ be the collection of dyadic cubes (hence essentially disjoint) of sidelength $\sim R^{-1/2}$
 such  that $I^d=\bigcup \mbq $.
We decompose the function $G_i(\cdot, t)$ into $G_{i,\mbq}(\cdot, t)$
which is defined by \eqref{decomp2}, and get \eqref{l2ortho}, which is clear.
Then, combining $\eqref{l2ortho}$
and  \eqref{halfsq},
 we have
$\big \| \prod_{i=1}^k \|\rho_{B(z,\sqrt R)} G_i\|_{L^2_t(I)}\big
\|_{L^ {\frac 2{k-1}}}\le C R^\frac\alpha 2 R^{-\frac d4}
\prod_{i=1}^k \big\|\big(\sum_{\mbq}|\rho_{B(z,\sqrt R)}
G_{i,\mbq}|^2\big)^\frac12\big\|_{L^2_{x,t}}. $ Then this gives 
\begin{align}
\label{half22} \Big \| \prod_{i=1}^k \|\rho_{B(z,\sqrt R)}
G_i\|_{L^2_t(I)}\Big \|_{L^ {\frac 2{k-1}}}
\lesssim R^{\frac\alpha 2-\frac d4} \prod_{i=1}^k
\Big\|\chi_{B(z,R^{\frac12+\eps})}\Big(\sum_{\mbq}|
G_{i,\mbq}|^2\Big)^\frac12\Big\|_{L^2_{x,t}} + \cE.
\end{align}
where $\cE= R^{-M} \prod_{i=1}^k \|
G_i\|_{L^2_{x,t}}$ for 
any large $M$.

We also denote by $(\mathrm N^t){}^{-1}$ (defined from $\mathrm N^t(I^{d-1})$ to $I^{d-1}$)  the inverse of $\mathrm N^t:\Gamma^{t}\to \mathbb S^{d-1}$
which is well defined because $\psi\in \fgee$.
Since $\partial_t\psi\in (1-\epsilon_\circ, 1+\epsilon_\circ)$, there is an interval $I_{i,\mbq}$ of length $C
R^{-1/2}$ such that $G_{i,\mbq}(\cdot, t)=0$ if $t\not\in I_{i,\mbq}$ (see \eqref{interval}).
As in  the proof of Proposition
\ref{multisq} we partition $I_{i,\mbq}$ into intervals 
$I_{i,\mbq}^l=[t_l, t_{l+1}]$, $l=1,\dots, l_0$, of sidelength $\sim R^{-1}$.
Since the Fourier transform of $G_i(\cdot, t)$ is supported in
$\Gamma^{t_l}+O(\delta)$ if $t\in I_{i,\mbq}^l=[t_l, t_{l+1}]$ and  the normal
vectors are confined in $\Pi+O(\delta)$, it follows that
\[\supp\, \mathcal F( G_{i,\mbq}(\cdot, t)) \subset \Gamma^{t_l}(\delta)\cap \big(({\mathrm N}^{t_l})^{-1}(\Pi)+O(\delta)\big), \quad t\in
[t_l, t_{l+1}].\]

Fix $t_l$, and let us set 
\[\xi_{i,\mbq}^{t_l}=(\zeta_{i,\mbq}^{t_l},\tau_{i,\mbq}^{t_l})\in \big(({\mathrm
N}^{t_l})^{-1}(\Pi)\cap \Gamma^{t_l}\big) \cap  \big(\supp \mathcal F(
G_{i,\mbq}(\cdot, t_l))+O(\delta)\big). \] 
(As before, we may assume that  this set is nonempty, otherwise the associated function $G_{i,\mbq}^l=0$. See below.)  Let  $v_1, \cdots, v_{k-1}$ be an
orthonormal basis for the tangent space $T_{\xi_{i,\mbq}^{t_l}}(
({\mathrm N}^{t_l})^{-1}(\Pi))$ at $\xi_{i,\mbq}^{t_l}$, and 
$u_1, \cdots, u_{d-k}$ be a set of orthonormal vectors such that $ \{
{\mathrm N}^{t_l}(\xi_{i,\mbq}^{t_l}), v_1,$ $\dots, v_{k-1},
u_1,\dots, u_{d-k}\}$ forms an orthonormal basis for $\mathbb
R^{d}$. Let us  set
\[\begin{aligned}
\mbr_{i,\mbq}^{t_l}= \big\{\xi:|(\xi-\xi_{i,\mbq}^{t_l})\cdot
{\mathrm N}^{t_l}(\xi_{i,\mbq}^{t_l})|\le &C\delta,\,\,
|(\xi-\xi_{i,\mbq}^{t_l})\cdot v_i|\le C\sqrt\delta,\,\, i=1,\dots,
k-1,\,
  \\  &\, |(\xi-\xi_{i,\mbq}^{t_l})\cdot u_i|\le C\delta, \,\,i=1,\dots d-k \big\}
  \end{aligned}
 \]
and
\[\begin{aligned}
\mathbf P_{i,\mbq}^{t_l} =\big\{\xi:|\xi\cdot {\mathrm
N}^{t_l}(\xi_{i,\mbq}^{t_l})|\le &C,\,\,
 |\xi\cdot v_i|\le C\sqrt \delta,\, i=1,\dots, k-1,\,|\xi\cdot u_i|\le C, \,i=1,\dots d-k \big\}\end{aligned}\]
with a sufficiently large $C>0$. Then  $\mathcal F(
G_{i,\mbq}(\cdot, t))$, $t\in [t_l, t_{l+1}]$ is supported in
$\mbr_{i,\mbq}^{t_l}$.

The rest of proof is similar to that of Proposition \ref{multisq}, so we shall be brief.  Let  $\fm_{i,\mbq}^{t_l}$  be a smooth function naturally  adapted to   $\mbr_{i,\mbq}^{t_l}$  such that  $\fm_{i,\mbq}^{t_l}\sim 1$ on $\mbr_{i,\mbq}^{t_l}$ and $\mathcal F^{-1}(\fm_{i,\mbq}^{t_l})$ is supported in  $R\mathbf  P_{i,\mbq}^{t_l}$. This can be done by using $\rho$ and composition with it an appropriate affine map (for example, see \eqref{flocal}).   As before we define $ G_{i,\mbq}^l(\cdot, t)$
by \eqref{tildeg} and let $K_{i,\mbq}^{t_l}=\mathcal F^{-1}(\fm_{i,\mbq}^{t_l})$
so that $
 G_{i,\mbq}^l(\cdot,t)=  G_{i,\mbq}^l(\cdot,t)\ast K_{i,\mbq}^{t_l}$ if
$t\in I_{i,\mbq}^l$. Hence, $\sum_\mbq  G_{i,\mbq}=\sum_{\mbq,l}
 G_{i,\mbq}^l(\cdot,t)\ast K_{i,\mbq}^{t_l}$, 
$|K_{i,\mbq}^{t_l}|\lesssim  | R\mathbf
P_{i,\mbq}^{t_l}|^{-1} \chi_{R\mathbf
P_{i,\mbq}^{t_l}}$.  Let us set  $\widetilde {\mathbf P}_{i,\mbq}^{t_l}=R^{1+\eps}  \mathbf
P_{i,\mbq}^{t_l}$.  
Hence, from the same lines of inequalities  as in
\eqref{squarehalf} and repeating the similar argument in the proof of Proposition \ref{multisq}  we have, for  $x\in B(y,R^{1/2+\eps})$,  
\[ \prod_{i=1}^k
\Big(\sum_{\mbq} \|
G_{i,\mbq}\ltwo^2(x)\Big)\lesssim  R^{ c\eps} \prod_{i=1}^k  \sum_{\mbq,i} \| G_{i,\mbq}^l(\cdot,t)\|_{L^2(I_\mbq^l)}^2\ast
(\frac{\chi_{{\widetilde{\mathbf P}}_{i,\mbq}^{t_l}}}{|{\widetilde{\mathbf P}}_{i,\mbq}^{t_l}|})(y).\] 
Now,  we use the lines of argument from \eqref{easy1} to \eqref{easy2}, and  combine this with \eqref{half22}  to get 
\begin{align*}
&\Big \| \prod_{i=1}^k \|G_i\|_{L^2_t(I)}\Lp {\frac 2{k-1}}{\ball
{0}{R}}
\lesssim R^{c\eps+\frac\alpha 2}  \Big \| \prod_{i=1}^k \Big(&\sum_{\mbq,\,l}\| G_{i,\mbq}^l(\cdot,t)\|_{L^2(I_\mbq^l)}^2\ast(\frac{\chi_{{\widetilde{\mathbf P}}_{i,\mbq}^{t_l}}}{|{\widetilde{\mathbf P}}_{i,\mbq}^{t_l}|})\Big)^\frac12 \Lp {\frac
2{k-1}}{\ball {0}{ 2R}} +\cE. 
\end{align*}
Since $\sum_{\mbq,\,l}\|\|\widetilde
G_{i,\mbq}\|_{L^2_t(I_\mbq^l)}\|_2^2\sim \sum_{\mbq}\|\|G_{i,\mbq}\|_{L^2_t(I_\mbq)}\|_2^2\sim
\|G_i\|_{L^2_{x,t}}$, the proof is completed if we show
\begin{align*}
\Big \| \prod_{i=1}^k \Big(\sum_{\mbq,\,l}f_{\mbq,\,l}&\ast
\frac{\chi_{{\widetilde{\mathbf
P}}_{i,\mbq}^{t_l}}}{|{\widetilde{\mathbf P}}_{i,\mbq}^{t_l}|}\Big)
\Big \|_{L^{\frac 2{k-1}}({\ball {0}{ 2R}})}\le C
R^{c\eps}\sigma^{-1} R^{-d} \prod_{i=1}^k
\Big(\sum_{\mbq,\,l}\|f_{\mbq,\,l} \|_1\Big).
\end{align*}

Finally, to show the above inequality we may repeat the argument in
the last part in the proof of Proposition \ref{confined}.  In fact, we need only to show the associated Kakeya estimate (for example, see \eqref{multi-kakeya0}, \eqref{multi-kakeya00}).  Using the
coordinates $(u,v)\in \Pi\times \Pi^\perp=\mathbb R^d$, it is sufficient to
show that the longer sides of ${\mathbf P}_{i,\mbq}^{t_l}$ are
transverse to $\Pi$. More precisely, if $\epsilon_\circ$ is sufficiently
 small and $N$ is large enough,  there is a constant $c>0$, independent of $\psi\in \fgee$, such
 that,  for $w\in \big(T_{\xi_{i,\mbq}^{t_l}}(\mathrm N^{-1} (\Pi))\oplus \text{span}\{\mathrm
  N(\xi_{i,\mbq}^{t_l})\}\big)^\perp$, \eqref{angle} holds.  Since
$\psi(\zeta,t)=\frac12|\zeta|^2+t+  \mathcal R$ with $\|\mathcal
R\|_{C^N(I^d\times I)}\le \epsilon_\circ$, by the same perturbation
argument it is sufficient to consider
$\psi(\zeta,t)=\frac12|\zeta|^2+t$. For this case \eqref{angle}
clearly holds for $w\in \big(T_{\xi_{i,\mbq}^{t_l}}(\mathrm N^{-1}
(\Pi))\oplus \text{span}\{\mathrm
  N(\xi_{i,\mbq}^{t_l})\}\big)^\perp$ because  translation by $t$ doesn't have any effect. The same argument works without modification. This
completes the proof.
\end{proof}

\subsection{Multi-scale decomposition for $S_\delta f$}
In this section we obtain multi-scale decomposition for the square function,  which is to be
combined with multilinear square function estimates to prove Proposition
\ref{localf2}. This is will be carried out in the similar way that
we obtain the decomposition in Section \ref{multiplier} though we
need to take care of the additional $t$ average.

Let $0<\epsilon_\circ\ll 1$, $1\ll N$, $\psi\in \fgee$, $\eta\in
\cE(N)$, and $\sdel$ be given by \eqref{srdef}. Let $\mathrm N^t$,
$\mbn$ be given by Definition \ref{nvector}. Let
$\kappa=\kappa(\epsilon_\circ, N)$ be the number given in
Proposition \ref{rescalesquare} so that \eqref{rescaless} holds
whenever $0<\eps\le \kappa$, $\psi\in \fgee$, and $\eta\in \cE(N)$.
As before, let $\sigma_1, \dots, \sigma_m$, and $M_1, \dots, M_m$  be
dyadic numbers such that \Be\label{dyadicsss}\delta\ll
\sigma_{d-1}\ll \dots \ll \sigma_1\ll \min(\kappa,1), \quad M_i=1/\sigma_i.\Ee
We assume that $f$ is Fourier supported in $\frac12 I^d$. We keep
using the same notation as in Section \ref{multi-scale}.
In particular, $\{\qqs i\}$, $\{\fQ^i\}$ are the collection of (closed)
dyadic intervals of sidelength $2\sigma_i$, $2 M_i$,
respectively,  so that \eqref{icube}  and \eqref{spatialcube} holds .

\subsubsection{Decomposition by normal vector sets} Let $\{\theta^{\,i}\}$ be a discrete subset of $\sphere$
of which elements are separated by distance $\sim\sigma_i$. Let $\fd^i$ be disjoint  subsets of $\{\fq^i\}$ which satisfies, for some $\theta^{\,i}$,   \Be\label{Thetai} \fd^i\subset \{\fq^i: \dist(\mbn(\fq^i), \theta^i)\le C\sigma_i\}\Ee
and 
\Be \label{Thetai1}\bigcup_{\fd^i} \fd^i=\{\fq^i\},  \quad i=1, \dots, m.\Ee
Obviously, such a partitioning of  $\{\fq^i\}$ is possible.  Disjointness between 
$\fd^i$ will be useful later for decomposing the square function. Then we also define an auxiliary operator by
\[{\sctp {} i f} = \Big(\sum_{\fq^i\in \fd^i}  |\sdel f_{\fq^i}|^2\Big)^\frac12.\]
Similarly, as before, $\tti,$ $\tti_\ast,$ $\tti_j$,
and $\tti_{j\ast}$ denote the elements in $\{ \tti\}$  for the rest of this section.
\begin{defn}  We define
$\mbn(\fd^i)$ to be a vector\footnote{Possibly, there are more than one $\theta$. In the case we simply choose one of them. Ambiguity of the definition does not  cause any problem in what follows. }  $\theta\in \{\theta^{\,i}\}$  such that $\dist(\mbn(\fq^i), \theta)\le C \sigma_i$ whenever
$\fq^i\in \fd^i$.  Particularly, we may set  $\mbn(\fd^i)=\theta^i$ if \eqref{Thetai} holds. 
\end{defn}

Since the map $\mathrm N^t$ is injective for each $t$, the elements of $\fd^i$ are contained in a $O(\sigma_i)$ neighborhood of the curve
$\{\xi: \mbn(\xi)=\theta^i\}$ with $\theta^i=\mbn(\fd^i)$.
From $\eqref{ell2}$
we observe that for any interval $J$ of length $\sigma_i$
there are as many as $O(1)$ $\fq^i\in \fd^i$
such that $\phi\big(\frac{D_d-\psi(D',t)}{\delta}\big) \ffqqs i\not=0 $
if $t\in J$. Hence, dividing $I$ intervals of length $\sim \sigma_i$ and taking integration in $t$ we see that
\Be\label{lps} \sdel(\sum_{\fq^i\in \fd^i} f_{\fq^i}) \lesssim
\Big(\sum_{\fq^i\in \fd^i}  |\sdel
f_{\fq^i}|^2\Big)^\frac12=\sctp {}i f \Ee with the implicit
constant independent of $\fd^i$. Since ${S_\delta} f\le
\sum_{\fd^i}\sdel(\sum_{\fq^i\in \fd^i} f_{\fq^i}) , \ i=1,\dots,
m$, we also have
\begin{equation}\label{sectorialbounds}
{S_\delta} f \lesssim \sum_{\fd^i}\Big(\sum_{\fq^i\in \fd^i}
|\sdel f_{\fq^i}|^2\Big)^\frac12=\sum_{\fd^i}\sctp {}i f.
\end{equation}

\subsubsection{$\sigma_1$-scale decomposition}
Decomposition at this stage  is similar with that of $T_\delta$ in
Section 2. So, we shall be brief.   Fix $x\in \mathbb R^d$ and let us denote by $\fd^{1}_\ast\in \{\fd^1\}$ such that
\[\fS^{}_{\fd^{1}_{\ast}}f (x)=\max_{\fd^1
}\fS^{}_{\fd^{1}}f (x).\]
 Considering the cases
$\sum_{\ti1}\fS^{}_{\tii{1}{}}f (x)\le 100^d \fS^{}_{\tii{1}{\ast}}f (x)$ and $
\sum_{\ti1}\fS^{}_{\tii{1}{}}f (x)>  100^d \fS^{}_{\tii{1}{\ast}}f(x)$ separately,
we have
\begin{align*}
\sdel f(x)
&\lesssim \sum_{\ti1}\fS^{}_{\ti{1}}f (x)
 \lesssim \fS^{}_{\tii{1}{\ast}}f (x)
 +\sigma_1^{1-d} \max_{\substack{\fd^1: |\mbn(\tii1\ast)-\mbn(\ti1)|\gtrsim
\sigma_1}} ( \fS^{}_{\tii{1}\ast}f (x) \fS^{}_{\ti{1}}f(x))^\frac12
\\
&\lesssim \fS^{}_{\tii{1}{\ast}}f (x)+\sigma_1^{1-d}\max_{\substack{{\tii11, \tii12}:
|\mbn(\tii11)-\mbn(\tii12)|\gtrsim
\sigma_1}}( \fS^{}_{\tii{1}1}f (x) \fS^{}_{\tii{1}2}f(x))^\frac12 \,.
\end{align*}
Since $\# \fd^i\lesssim \sigma_1^{-1}$ and $\sctffp 11 \sctffp  12 =
\big(\sum_{\vpi 11\in \tii 11, \vpi 12\in \tii 12}
 \big({S_\delta}\vppf11{S_\delta}\vppf12\big)^2\,\big)^{1/2}$,  \begin{align*}
\sdel f(x) &\lesssim \sigma_1^{\frac1p-\frac12}
\Big(\sum_{\fq^1\in \fd^1_\ast}  |\sdel^{}
f_{\fq^1}|^p\Big)^\frac1p
+\sigma_1^{-C}\Big(\sum_{\substack{{\tii11, \tii12}: 
|\mbn(\tii11)-\mbn(\tii12)|\gtrsim
\sigma_1}}\big({S_\delta}\vppf11{S_\delta}\vppf12\big)^\frac
p2\Big)^\frac1{p}.
\end{align*}
Taking $L^p$ norm on both side of the inequality yields
\begin{align*}
\|\sdel f\|_p &\lesssim  \sigma_1^{\frac1p-\frac12}
\Big(\sum_{\fq^1}  \|\sdel^{} f_{\fq^1}\|_p^p\Big)^\frac1p
+\sigma_1^{-C}\Big(\sum_{{\qqq11, \qqq12}: trans}\|{S_\delta}\vppf11{S_\delta}\vppf12\|_{\frac p2}^\frac
p2\Big)^\frac1{p}. 
\end{align*}
Hence, using Lemma  \ref{rescalesquare} and Lemma \ref{vector}, we have
\begin{align}\label{2linear}
\|\sdel f\|_p &\lesssim  \sigma_1^{\frac2p}
B_{p}(\sigma_1^{-2} \delta)\|f\|_p +\sigma_1^{-C}
\max_{{\qqq11, \qqq12}: trans}
\|{S_\delta}\vppf11{S_\delta}\vppf12\|_{\frac p2}^\frac 12\,.
\end{align}
We proceed to decompose those terms appearing in  the bilinear expression.  

\subsubsection{$\sigma_k$-scale decomposition, $k\ge 2$}  Fixing $\sigma$, for $l\in \sigma^{-1}\mathbb Z^d$, let $A_l$ and $\tau_l$ be given by \eqref{A-tau}.  The following is a slight modification of Lemma \ref{scattered}.
   
\begin{lem} \label{smod}Let  $\fd$ be a subset of $\{\fq^i\}$. Set
$\fS_\fd^{}f=\big(\sum_{\fq^i\in \fd}  |\sdel^{} f_{\fq^i}|^{2}\big)^{1/2}$, and set 
\[[\sctp {} {} f]  = \sum_{l\in M_i\mathbb Z^d}
A_l^\frac12 \sctp {} {} {(\tau_{l}f)},\,\,\,\, |\![\sctp {} {} f]\!|  = \sum_{l,l'\in M_i  \mathbb   Z^d}
(A_lA_{l'})^\frac12 \sctp {} {} {(\tau_{(l+l')}f)}.\]
If $x,$ $x_0\in \fQ^i$, the following inequality holds with the implicit constants independent of $\fd$:
\Be\label{smodul}\sctp {} {} f (x)  \lesssim  [\sctp {} {} f ](x_0) \lesssim
|\![\sctp {} {} f ]\!|(x).\Ee
\end{lem}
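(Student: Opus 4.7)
The plan is to reduce the lemma to a direct application of Lemma \ref{scattered} applied pointwise in $t$, combined with a square-sum/Minkowski argument that converts the $A_l$ weights into $A_l^{1/2}$ weights. The crucial observation is that for each fixed $\fq^i\in\fd$ and each fixed $t\in I$, the function
\[
G_{\fq^i,t}(x)\;:=\;\phi\Big(\tfrac{\eta(D,t)(D_d-\psi(D',t))}{\delta}\Big)f_{\fq^i}(x)
\]
has Fourier support contained in $\fq^i$, a cube of side length $2\sigma_i$. Since $x,x_0\in \fQ^i$ means $x\in \fq(x_0,O(M_i))$, Lemma \ref{scattered} (with scale $\sigma=\sigma_i$ and up to an inessential constant absorbed into $A_l$) yields
\[
|G_{\fq^i,t}(x)|\;\lesssim\;\sum_{l\in M_i\Z^d}A_l\,|G_{\fq^i,t}(x_0-l)|.
\]
Because the Fourier multiplier in the definition of $S_\delta$ commutes with translation in $x$ and because $\tau_l(f_{\fq^i})=(\tau_l f)_{\fq^i}$, taking $L^2_t(I)$ norms and using Minkowski's inequality in $l$ gives
\[
\sdel f_{\fq^i}(x)\;\lesssim\;\sum_{l\in M_i\Z^d}A_l\,\sdel\bigl((\tau_l f)_{\fq^i}\bigr)(x_0).
\]

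Next I would square this inequality and sum in $\fq^i\in \fd$. The key step is a Cauchy--Schwarz manipulation that uses $\sum_l A_l<\infty$ (uniformly in $\sigma_i$, by rapid decay of $A_l$):
\[
\Big(\sum_{l}A_l\,a_l\Big)^{\!2}\le \Big(\sum_{l}A_l\Big)\sum_l A_l a_l^2\;\lesssim\; \sum_{l}A_l\,a_l^2.
\]
Applying this with $a_l=\sdel((\tau_l f)_{\fq^i})(x_0)$ and interchanging the $l$- and $\fq^i$-sums converts the pointwise estimate into the square-function estimate
\[
|\sctp{}{}f(x)|^2\;\lesssim\;\sum_{l\in M_i\Z^d}A_l\,|\sctp{}{}(\tau_l f)(x_0)|^2.
\]
Taking square roots and using the trivial embedding $\ell^2\hookrightarrow\ell^1$ in the form $(\sum_l b_l^2)^{1/2}\le\sum_l|b_l|$ with $b_l=A_l^{1/2}\sctp{}{}(\tau_l f)(x_0)$ yields
\[
\sctp{}{}f(x)\;\lesssim\;\sum_{l\in M_i\Z^d}A_l^{1/2}\,\sctp{}{}(\tau_l f)(x_0)\;=\;[\sctp{}{}f](x_0),
\]
which is the first inequality. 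Note that the passage from $A_l$ weights at the pointwise level to $A_l^{1/2}$ weights at the square-function level is exactly the place where the slight discrepancy between the definitions of $[F]_\sigma$ in Lemma \ref{scattered} and $[\sctp{}{}f]$ here is reconciled; this is the one subtle accounting step.

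For the second inequality, apply the first one with the roles of $x$ and $x_0$ reversed to each translate $\tau_l f$, obtaining $\sctp{}{}(\tau_l f)(x_0)\lesssim \sum_{l'}A_{l'}^{1/2}\sctp{}{}(\tau_{l+l'}f)(x)$ since $\tau_{l'}\tau_l=\tau_{l+l'}$. Multiplying by $A_l^{1/2}$, summing in $l$, and collecting terms produces $|\![\sctp{}{}f]\!|(x)$ exactly. All constants are independent of $\fd$ since they depend only on $\sum_l A_l$ and on the constants in Lemma \ref{scattered}, neither of which involves the direction partition. The main obstacle is purely bookkeeping, namely correctly tracking how translations interact with the frequency projection $f_{\fq^i}$ and the $L^2_t$ norm, and justifying the Cauchy--Schwarz step that produces the $A_l^{1/2}$ weights; no deep new ingredient is needed beyond Lemma \ref{scattered} and Minkowski.
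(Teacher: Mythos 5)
Your proof is correct and essentially identical to the paper's: both apply \eqref{fexpansion} from Lemma \ref{scattered}, Cauchy--Schwarz exploiting $\sum_l A_l<\infty$ to pass from $A_l$-weights to $A_l$-weights on squares, and the inequality $(\sum_l b_l^2)^{1/2}\le\sum_l|b_l|$, differing only in whether Cauchy--Schwarz is applied before or after the $L^2_t$ integration. (One small nomenclature slip: that last inequality is the embedding $\ell^1\hookrightarrow\ell^2$, not ``$\ell^2\hookrightarrow\ell^1$'' as you wrote.)
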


\begin{proof} Note that $\fq^i$ is a cube of sidelength $2\sigma_i$. Since $x,$ $x_0\in \fQ^i$,
using \eqref{fexpansion} and Cauchy-Schwarz inequality, we get
\begin{align*}
\Big|\phi\Big(&\frac{D_d-\psi(D',t)}{\delta}\Big)f_{\fq^i}(x)\Big|^2\lesssim
\sum_{l\in  M_i\mathbb Z^d} A_l
\Big|\phi\Big(\frac{\eta(D,t)(D_d-\psi(D',t))}{\delta}\Big)\tau_{l}f_{\fq^i}(x_0)\Big|^2.
\end{align*}
By taking integration in $t$ we get
\begin{align}\label{xx}
(\sdel f_{\fq^i}(x))^2\lesssim
\sum_{l\in  M_i\mathbb Z^d} A_l(\sdel(\tau_{l}f_{\fq^i})(x_0))^2.
\end{align}
Summation in
$\fq^i\in \fd$ gives
\[\Big(\sum_{\fq^i\in \fd}(\sdel f_{\fq^i}(x))^2\Big)^\frac12\lesssim
\sum_{l\in M_i\mathbb Z^d} A_l^\frac12 \Big(\sum_{\fq^i\in \fd} (\sdel(\tau_{l}f_{\fq^i})(x_0))^2\Big)^\frac12,\]
by which we get the first inequality of \eqref{smodul}.
By interchanging the roles of $x$ and $x_0$ in \eqref{xx} and summation in  $\fq^i\in \fd$  it follows that
\[\sum_{\fq^i\in \fd} (\sdel(\tau_{l}f_{\fq^i})(x_0))^2\lesssim \sum_{l\in M_i\mathbb Z^d}
A_{l'}\sum_{\fq^i\in \fd} (\sdel(\tau_{(l+l')}f_{\fq^i})(x))^2 \]
Putting this in the right hand side of the above inequality and repeating the same argument,
we get the second inequality of \eqref{smodul}.
\end{proof}

Now we have bilinear decomposition \eqref{2linear} on which we build higher degree of multilinear decomposition. 

\subsubsection{From $k$-transversal to $k+1$-transversal,  $2\le k\le m$}   Let us be given  cubes $\vpp {k-1} 1,\vpp {k-1}2,\dots,
\vpp {k-1}{k}$ of sidelength $\sigma_{k-1}$ which satisfy
\eqref{trans-k}. Though we use the same notations as in the multiplier estimate  case, it should be noted that the normal vector field  $\mathbf n$ is defined on $I^{d-1}\times CI$ (see Definition  \ref{nvector}).   As before, we denote by $\{\vpp k i\}$ the collection of
dyadic cubes of sidelength $\sigma_k$ contained in $\vpp {k-1} i$ (see \eqref{k-cube}), which  are partitioned  into the subsets of $\{\tii{k}i\}$ so
that
\[\bigcup_{\tii ki}\Big(\bigcup_{\vpp ki\in  \tii ki} \vpp ki\Big)= \vpp{k-1}i, \, i=1,\dots, k.\]
So,
we can write
\[
\prod_{i=1}^k {S_\delta}(\sum_{\vpp ki\subset \vpp {k-1}i} \vppff
{k-1}i ) =  \prod_{i=1}^k {S_\delta}(\sum_{\tii{k}i}\sum_{\vpp ki\in
\tii ki} \vppff ki )
\]
and recall the definition
$ \sqfrq ki{k-1} := \big(\sum_{\fq^k_i\in \fd^k_j}  |\sdel F_{\fq^k_i}|^2\big)^{1/2}.$
Fix $\fQ^k$ and let $x_0$ be the center of $\fQ^k$.
Let  $\tii {k}{i\ast}\in \{\tii ki\}$ be an angular partition such that
\[\sqfr {k}{i\ast}{k-1}i(x_0)=\max_{\tii k i} \sqfr  k i{k-1}i(x_0). \]

Let us set 
\Be\label{majorks}
\overline{\Lambda_i^k}=\big\{\tii k{i}: [\sqfr ki{k-1}i](x_0)> (\sigma_k)^{kd}
\max_{1\le j\le k}[\sqfr {k}{j\ast}{k-1}j](x_0)\big\}, \quad 1\le i \le k\,. \Ee
We split the sum to get
\Be\label{split1}\begin{aligned}
 \prod_{i=1}^k 
 {S_\delta}(\sum_{\tii{k}i}\sum_{\vpp ki\in \tii ki} \vppff ki )
\le  
\prod_{i=1}^k {S_\delta}(\sum_{\tii{k}i\in \overline{\Lambda^k_i}\,}
\sum_{\vpp ki\in \tii ki} \vppff ki )   +  \sum_{(\tii k1,
\dots, \tii kk)\not\in \prod_{i=1}^k \overline{\Lambda^k_i} } \prod_{i=1}^k
{S_\delta}(\sum_{\vpp ki\in \tii ki} \vppff ki ).
\end{aligned}
\Ee 
Thus, if $x\in \mathfrak Q^k$, by \eqref{smodul} and \eqref{lps}  the second term in the right hand side
is bounded by 
\Be\label{mins}
\begin{aligned} 
&\sum_{(\tii k1, \dots, \tii kk)\not\in \prod_{i=1}^k\overline{\Lambda^k_i}\, }\, 
     \prod_{i=1}^k {S_\delta}(\sum_{\vpp ki\in \tii ki} \vppff ki )(x)
         \lesssim \sum_{(\tii k1, \dots, \tii kk)\not\in \prod_{i=1}^k 
                 \overline{\Lambda^k_i}\, } \,\prod_{i=1}^k
                    [\sqfr ki{k-1}i](x_0)
\\
&\lesssim
\big(\max_{1\le j\le k}[\sqfr{k}{j\ast} {k-1}j](x_0)\big)^k
\lesssim 
\big(\max_{1\le j\le k} [ \fS_{\tii{k}{j\ast}  }   f  ](x_0)\big)^k
\lesssim
(\max_{\tii
k{}}|\![\fS_{\tii{k}{}  }   f]\!|(x))^k. 
\end{aligned}
\Ee
Here $\{\tii k{}\}=\bigcup_{1\le i\le k}  \{\tii ki\}$ and the third inequality follows from the definition of 
$\fS_{\tii{k}{j}} f$ because $\vpp k i\subset \vpp {k-1}i$ . 
  Since \eqref{mins} holds for each $\mathfrak Q^k$, 
  integrating over all $\mathfrak Q^k$, using Lemma \ref{smod}, 
  Proposition \ref{rescalesquare} and
Lemma \ref{vector}, we get 
\Be
\begin{aligned}\label{maxs} 
&\Big\|\sum_{(\tii k1, \dots,    \tii kk)\not\in \prod_{i=1}^k\overline{\Lambda^k_i}\, }\, 
     \prod_{i=1}^k {S_\delta}(\sum_{\vpp ki\in \tii ki} \vppff ki )\Big\|_{\frac pk}^\frac1k
\lesssim 
     \|\max_{\tii k{}}|\![\fS_{\tii{k}{}  }   f]\!|\|_p  
\lesssim 
   \sup_{h}  \|\max_{\tii k{}}   
                          \fS_{\tii{k}{}  }  ( \tau_{h} f)\|_p   
     \\&
     \lesssim   \sup_{h} \Big( \sum_{\tii k{}}   \|  
                          \fS_{\tii{k}{}  }  ( \tau_{h} f)\|_p^p \Big)^\frac1p
  \lesssim \sup_{h} \sigma_k^{(\frac1p-\frac12)}
\Big(\sum_{\fq^k_i} \|\sdel  \tau_h f_{\fq^k_i}\|^p_p\Big)^{1/p}
\lesssim
\sigma_k^{\frac2p} B_{p}(\sigma_k^{-2}
\delta)\|f\|_p. 
\end{aligned}
\Ee
The inequality before the last one follows from the definition of  $\fS_{\tii{k}{}  } f$ and H\"older's inequality 
since there are as many as  $O(\sigma^{-1}_k)$ $\vpp k{}\subset \tii{k}{}$.

We note that  vectors $\mbn(\fd^{k}_{1\ast}),$ $ \dots, \mbn(\fd^{k}_{k\ast})$ are linearly independent  because $\vpp {k-1} 1,\vpp {k-1}2,\dots,
\vpp {k-1}{k}: trans$. We also  
denote by $ \Pi_*^k=\Pi_*^k(\vpp{k-1}{1},\dots, \vpp{k-1}{k}, \fQ^k)$ the $k$ plane  spanned by  the vectors 
$\mbn(\fd^{k}_{1\ast}),$ $ \dots, \mbn(\fd^{k}_{k\ast})$.  Let us set 
\[ \overline{{\fN}}=\overline{{\fN}}(\vpp{k-1}{1},\dots, \vpp{k-1}{k}, \fQ^k)=  \{\fd^{k}:
\dist(\mbn(\fd^{k}), \Pi_*^k)\le C\sigma_k\}.\]
We   split the sum and  use the triangle inequality so that
\begin{equation}\label{split2}\begin{aligned}
 \prod_{i=1}^k {S_\delta}
 (\sum_{\tii{k}i\in \overline{\Lambda^k_i}}\sum_{\vpp ki\in \tii ki} \vppff ki )
&\le \prod_{i=1}^k {S_\delta}(\!\!\!\! \sum_{ \substack{\tii k i\in
\overline{\Lambda^k_i} : \, 
 \tii ki\in \overline{{\fN}} \,\, }} \sum_{\vpp ki\in \tii ki} \vppff ki )
+\!\!\!\!
\sum_{\substack{\tii k i\in  \overline{\Lambda^k_i}:\, 
\tii ki\not\in \overline{{\fN}} \text{ for some } i}} \prod_{i=1}^k
{S_\delta}(\sum_{\vpp ki\in \tii ki} \vppff ki ).
\end{aligned}
\end{equation}
For the $k$-tuples
$(\tii
k1,\dots,\tii k k)$ appearing in the second summation of the right hand side,
there is a $\tii ki$ for which $\mbn(\tii ki)$ is not contained in
$\Pi_*^k+O(\sigma_k)$. In  particular, suppose that $\mbn(\tii k1)\not\in \Pi_*^k+O(\sigma_k)$.
Then,  by  \eqref{smodul} and  \eqref{majorks}  we have
\begin{align*}
 \prod_{i=1}^k {S_\delta}(\sum_{\vpp ki\in \tii ki} \vppff ki )(x)
\lesssim
\prod_{i=1}^k [\sqfr{k}{i}{k-1}i](x_0)
\le \sigma_k^{-C} ([\sqfr {k}{1}{k-1}{1}](x_0))^\frac k{k+1}\prod_{i=1}^k ([\sqfr {k}{i\ast}{k-1}{i}](x_0))^\frac k{k+1}.
\end{align*}

Recall that $V\!ol(\mbn(\xi_1),\mbn(\xi_2),\dots, \mbn(\xi_k))\gtrsim \sigma_1\dots\sigma_{k-1}$ if
$\xi_i\in \qqq{k-1}i$, $i=1,\dots, k$. From the definition of $\overline{\fN}$ it follows that
$\dist (\mbn(\fq^k), \Pi_*^k)\gtrsim \sigma_{k}$ if $\fq^k\in \fd^k$ and $\mbn(\fd^k)\not \in  \overline  \fN$. Hence
$V\!ol(\mbn(\xi_1),\mbn(\xi_2), \dots, \mbn(\xi_k),$ $ \mbn(\xi_{k+1}))\gtrsim \sigma_1\dots \sigma_{k}$
if $\xi_i\in  \fq^k_i$ and $\fq^k_i\in \tii k{i\ast}$, $i=1,\dots,k$, 
and $\xi_{k+1}\in \fq_{k+1}^k$ and $\fq_{k+1}^k\in \tii k1$. Hence these cubes are transversal.
Since there are only $O(\sigma_k^{-C})$ $\sigma_k$-scale cubes, by \eqref{smodul} and H\"older's inequality
\begin{align*}
 &
 \qquad\prod_{i=1}^k {S_\delta}
 (\sum_{\vpp ki\in \tii ki} \vppff ki )(x)
\lesssim 
\sigma_k^{-C} (|\![\sqfr {k}{1}{k-1}1]\!|(x))^\frac k{k+1}\prod_{i=1}^k
(|\![\sqfr {k}{i\ast}{k-1}i]\!|(x))^\frac k{k+1}
\\
\lesssim &\sigma_k^{-C}\sum_{l_1, l_1', \dots, l_{k+1}, l_{k+1}'\in M_k \mathbb Z^d}
 \prod_{i=1}^{k+1} \widetilde A_{l_i}\widetilde A_{l_i'}
\Big(\sum_{\qqq k1, \dots,\qqq k{k+1}:trans} \Big(\prod_{i=1}^{k+1}
{S_\delta}(\tau_{(l_i+l_i')}\vppff{k}i)(x)\Big)^\frac p{k+1}\,
\Big)^\frac{k}{p}.
\end{align*}
Here $\widetilde A_{l_i}, \widetilde A_{l_i'}$ are rapidly decaying sequences.  
The same is true for any $\tii k1,\dots, \tii kk$ satisfying   $\tii k i\in \overline{\Lambda^k_i}$, $1\le i\le k$, 
and $ \tii ki\not\in\overline{{\fN}}$ for some $i$ and
this  holds regardless of $\fQ^k$. So, we have, for any $x$,
\Be\label{split3}\begin{aligned}
 &\sum_{\substack{\tii k i\in \overline{\Lambda^k_i}: 
\tii ki\not\in \overline{{\fN}} \text{ for some } i}}
\,\,\,\prod_{i=1}^k S_\delta(\sum_{\vpp ki\in \tii ki} \vppff ki )(x)
\\
\lesssim &\sigma_k^{-C}\sum_{l_1, l_1', \dots, l_{k+1}, l_{k+1}'}
 \prod_{i=1}^{k+1} \widetilde A_{l_i}\widetilde A_{l_i'}
\Big(\sum_{\qqq k1, \dots,\qqq k{k+1}:trans} \Big(\prod_{i=1}^{k+1}
{S_\delta}(\tau_{(l_i+l_i')}\vppff{k}i)(x)\Big)^\frac p{k+1}\,
\Big)^\frac{k}{p}.
\end{aligned}
\end{equation}
Since  $\widetilde A_{l_i}, \widetilde A_{l_i'}$ are rapidly decaying,  taking $L^{p/k}$ norm  and a simple manipulation give
 \Be\label{obss}\begin{aligned}
\Big\|\sum_{\substack{\tii k i\in \overline{\Lambda^k_i}:
\\
\tii ki\not\in \overline{{\fN}} \text{ for some } i}}
\prod_{i=1}^k S_\delta(\sum_{\vpp ki\in \tii ki} \vppff ki ) \Big\|_{\frac pk}
\lesssim \sigma_k^{-C}\sup_{\tau_1,\dots, \tau_{k+1}}
\max_{\qqq k1, \dots,\qqq k{k+1}:trans} \Big\|\prod_{i=1}^{k+1}
{S_\delta}(\tau_{i}\vppff{k}i)\Big\|_{\frac
p{k+1}}^\frac{k}{k+1}.
\end{aligned}
\Ee

We now combine the inequalities  \eqref{split1}, \eqref{mins}, \eqref{split2}, \eqref{split3} to get
\[
\begin{aligned}
\prod_{i=1}^k& {S_\delta}(\sum_{\tii{k}i}\sum_{\vpp ki\in \tii ki} \vppff ki ) 
\lesssim 
(\max_{\tii
k{}}|\![\fS_{\tii{k}{}  }   f]\!|(x))^k
+ \chi_{\fQ^k}\prod_{i=1}^k {S_\delta}( \sum_{ \substack{\tii k
i\in \overline{\Lambda^k_i}: \, \tii ki\in\overline{{\fN}}  }}\,\, \sum_{\vpp ki\in \tii ki} \vppff ki )
\\
+&\sigma_k^{-C}\sum_{l_1, l_1', \dots, l_{k+1}, l_{k+1}'}
 \prod_{i=1}^{k+1} \widetilde A_{l_i}\widetilde A_{l_i'}
\Big(\sum_{\qqq k1, \dots,\qqq k{k+1}:trans} \Big(\prod_{i=1}^{k+1}
{S_\delta}(\tau_{(l_i+l_i')}\vppff{k}i)(x)\Big)^\frac p{k+1}\,
\Big)^\frac{k}{p}.
\end{aligned}
\]
Here $\overline\fN$ depends on $\vpp{k-1}{1},\dots, \vpp{k-1}{k}, \fQ^k$. 
By taking $1/k$-th power, integrating on $\mathbb R^{d}$ and using \eqref{maxs} and \eqref{obss} we get
\Be\label{scale-k} \begin{aligned}
 \Big\|\Big(\prod_{i=1}^k S_\delta(&\sum_{\vpp ki\subset \vpp {k-1}i} \vppff {k}i )\Big)^\frac 1k\Big\|_{p}
    \lesssim  
  \sigma_k^{\frac 2p} B_{p}(\sigma_k^{-2} \delta)\|f\|_p
  +\sigma_k^{-C}\sup_{\tau_1,\dots, \tau_{k+1}}
\max_{\substack{\qqq k1, \dots,\qqq k{k+1}:\\ trans}} \Big\|\prod_{i=1}^{k+1}
{S_\delta}(\tau_i\vppff{k}i)\Big\|_{\frac
p{k+1}}^\frac{k}{k+1}
\\
  &\qquad +\Big(\sum_{\fQ^k}
\Big\|\prod_{i=1}^k  {S_\delta} ( \sum_{\substack{\tii ki: \tii ki\in \\ [\overline{{\fN}}](\vpp{k-1}{1},\dots,
\vpp{k-1}{k}, \fQ^k)}} \,\,\sum_{\substack{\vpp ki\in \tii ki:\\ \vpp
ki\subset \vpp {k-1}i}}\vppff ki ) \Big\|_{L^\frac pk(\fQ^k)}^{\frac
pk}\Big)^\frac 1p, 
\end{aligned}
\Ee
where $[\overline{{\fN}}](\vpp{k-1}{1},\dots,
\vpp{k-1}{k}, \fQ^k)$ denotes a subset of $\overline{{\fN}}(\vpp{k-1}{1},\dots,
\vpp{k-1}{k}, \fQ^k)$ which depends on $\vpp{k-1}{1},\dots,$ $
\vpp{k-1}{k}, \fQ^k$.

\subsubsection{Multi-scale decomposition} For $k=2,\dots, m$, let us set
\[ \overline{\fM^{k}}\!f
        = \sup_{\tau_1,\dots,\tau_k} \max_{\qqq {k-1}1, \dots, \qqq {k-1}k: trans}
                \Big(\sum_{\fQ^k} \Big\|\prod_{i=1}^k  {S_\delta} (
                     \sum_{\substack{\tii ki: \tii ki\in \\
                         [\overline{{\fN}}](\vpp{k-1}{1},\dots, \vpp{k-1}{k}, \fQ^k)}} \sum_{\substack{\vpp
                                   ki\in \tii ki:\\ \vpp ki\subset \vpp {k-1}i}} \tau_i\vppf ki
)\Big\|_{L^\frac pk(\fQ^k)}^{\frac pk}\Big)^\frac 1p.\]
 Here $[\overline{{\fN}}](\vpp{k-1}{1},\dots, \vpp{k-1}{k}, \fQ^k)$ also depends on $\tau_1,
\dots, \tau_k$ but this doesn't affect the overall bound.  Starting from \eqref{2linear} we successively apply
\eqref{scale-k} to $k$-scale transversal products (given by $\qqq
{k-1}1, \dots,$ $\qqq{k-1}{k}:trans$). After decomposition up to $m$-th scale we get
\Be\label{scale-2}
\begin{aligned}
\|\sdel f\|_p \lesssim
\sum_{k=1}^m \sigma_{k-1}^{-C} \sigma_k^{\frac2p} & B_{p}(\sigma_k^{-2} \delta)\|f\|_p
+ \sum_{k=2}^m\sigma_{k-1}^{-C} \overline{\fM^k} f\\
&+
\sigma_m^{-C}\sup_{\tau_1, \dots, \tau_{m+1}}\,\,\, \max_{\qqq{m}1,\dots\qqq m{m+1}: trans}
\Big \| \prod_{i=1}^{m+1} \sdel \tau_i f_{\qqq {m}{i}}\Big\|_{L^\frac p{m+1}}^{\frac 1{m+1}}.
\end{aligned}
\Ee

\subsection{Proof of Proposition  \ref{localf2}}\label{pf-sq}  
We may assume 
$d\ge 9$ since $p_s\ge  {2(d+2)}/d$ for $d<9$ and the sharp bound for $p\ge {2(d+2)}/d$ is verified in \cite{lrs}. So, we have $p_s(d)\ge\frac{2(d-1)}{d-2}$.  
The proof is similar to that of  Proposition
\ref{localfrequency}. 
 Let $\beta>0$ and we aim to show that $\cB^\beta(s)\le C$ for $0<s\le
1$ if $p\ge p_s(d)$.  We choose $\epsilon>0$ such that $(100d)^{-1}\beta\ge \epsilon$. 
 Fix  $\epsilon_\circ>0$ and $N=N(\epsilon)$  such that Corollaries 
\ref{multi-squarekkk}, \ref{sq} and \ref{squarefunt222} hold
uniformly for $\psi \in \fgee$.

Let $s< \delta\le 1$. Obviously,
$(\sigma_k^{-2}\delta)^{\frac{d-2}2-\frac dp+\beta}
B(\sigma_k^{-2}\delta) \le  \mathcal B^\beta(s)+\sigma_k^{-C}$ because
$s\le \sigma_k^{-2}\delta$ and $B(\delta)=B_{p}(\delta)
\le C$ for $\delta\gtrsim 1$. Hence,  it follows that
\Be
\label{est1}
\begin{aligned}
\sigma_k^{\frac2p} B(\sigma_k^{-2}\delta)
\lesssim \sigma_k^{2(\frac{d-2}2-\frac {d-1}p)+2\beta}
\delta^{-\frac{d-2}2+\frac dp-\beta}( \cB^\beta(s)+\sigma_k^{-C}).
\end{aligned}
\Ee 
 We first consider  the $(m+1)$-product in \eqref{scale-2}. By Corollary
\ref{multi-squarekkk} we have, for $p\ge 2(m+1)/m$, \Be \label{est2}
\sup_{\tau_1, \dots, \tau_{m+1}} \max_{\qqq{m}1,\dots\qqq m{m+1}:
trans} \Big \| \prod_{i=1}^{m+1} \sdel \tau_i f_{\qqq
{m}{i}}\Big\|_{L^\frac p{m+1}}^{\frac 1{m+1}} \le C_\epsilon
\sigma_m^{-C} \delta^{-\frac{d-2}2+\frac dp-\epsilon} \|f\|_p. \Ee

For $\overline{\fM^k}$, as before  we have two types of estimates. The first one follows
from Corollary \ref{multi-squarekkk} while the second one is a
consequence of the square function estimates in  Corollary
\ref{squarefunt222}. From the definition  of $\overline{\fM^k}$, we note that  $\vpp {k}1,\vpp {k}2,\dots,\vpp {k}{k}$
are  contained, respectively, in $\vpp {k-1}1,\vpp {k-1}2,\dots,\vpp
{k-1}{k}$ which are transversal. Hence, we have 
\[ \prod_{i=1}^k{S_\delta}
( \sum_{\tii ki\in
[\overline{{\fN}}](\vpp{k-1}{1},\dots, \vpp{k-1}{k}, \fQ^k)}
\sum_{\substack{\vpp ki\in \tii ki:\\ \vpp ki\subset \vpp {k-1}i}}
\tau_i\vppf ki )(x) \le \sum_{\vpp {k}1,\vpp {k}2,\dots,\vpp
{k}{k}:trans}\prod_{i=1}^k{S_\delta} (\tau_i\vppf ki )(x). \]
Here $\vpp {k}1,\vpp {k}2,\dots,\vpp
{k}{k}:trans$ means
$V\!ol(\mbn(\xi_1),\dots,\mbn(\xi_k))\ge\sigma_1\dots\sigma_{k-1}$
provided $\xi_i\in \vpp k i$, $i=1\dots, k$. Since  there are as many
as $O(\sigma_{k-1}^{-C})$ $\vpp {k-1}1,\dots,\vpp {k-1}k$ and the above holds regardless of $\fQ^k$, by
Corollary  \ref{squarefunt222} we have, for $p\ge 2k/(k-1)$,
\Be\begin{aligned} \label{est3r}
\overline{\fM^k}  f\lesssim \sigma_{k}^{-C}
\sup_{\tau_1,\dots, \tau_k}\sum_{\vpp {k}1,\vpp {k}2,\dots,\vpp
{k}{k}:trans} \Big\|\prod_{i=1}^k{S_\delta} (\tau_i\vppf ki
)\Big\|_{\frac pk} \lesssim \sigma_{k}^{-C}\delta^{-\frac{d-2}2+\frac dp-\epsilon}
\|f\|_p.\end{aligned}
\Ee

\subsubsection*{Estimates for $\overline{\fM^k}$ via  Corollary
\ref{sq}} By fixing  ${\tau_1,\dots,\tau_k}$,  and  $(\qqq {k-1}1, \dots, \qqq {k-1}k)$ satisfying  $\qqq {k-1}1, \dots, \qqq {k-1}k: trans$,  we first handle  the integral over $\fQ^k$  which is  in the definition of  $\overline{\fM^k}$.  For $i=1,\dots, k$, set
\[f_i=\sum_{\tii ki\in
[\overline{{\fN}}](\vpp{k-1}{1},\dots, \vpp{k-1}{k}, \fQ^k)}\,\,
\Big(\sum_{\substack{\vpp ki\in \tii ki:  \vpp ki\subset \vpp {k-1}i}} \tau_i\vppf ki\Big).\]
Since  $\vpp {k-1}1,\dots,\vpp {k-1}k:trans$,
\eqref{transverse1}
holds with $\sigma=\sigma_1\dots\sigma_{k-1}$ whenever
$\xi_i\in \supp \widehat f_i+O(\sigma_k)$, $i=1,2,\dots,k$.
Also note that $\mbn(\tii k1),\dots,$ $\mbn(\tii k k)
\subset \Pi_\ast^k(\vpp {k-1}1,\dots,\vpp {k-1}k,$ $ \fQ^{k})$. Hence,
it follows that \eqref{angleconf} holds with $\widetilde \sigma=\sigma_k$.
Let us set 
\[  \cQ(\vpp{k-1}{1},\dots,
\vpp{k-1}{k}, \fQ^k)=\big \{  \vpp k{}:    
\mathbf n(\vpp k{}) \in [\overline{\fN}](\vpp {k-1}1,\dots,\vpp {k-1}k, \fQ^{k})\big\}\] 
Let write $\fQ^k=\fq(z,1/\sigma_k)$. Then,  by Corollary
\ref{squarefunt222}  we have, for $2\le p\le
2k/(k-1)$, 
\begin{align*}
\Big\|\Big(\prod_{i=1}^k S_\delta  f_i 
\Big)^\frac1k\Big\|_{L^p(\fQ^k)}^p
&\lesssim  \sigma_{k-1}^{-C_\epsilon}\sigma_{k}^{-\epsilon}
\prod_{i=1}^k 
\Big\|\Big(\sum_{\substack{\vpp {k}i\in \vpp i{k-1}:\, \vpp {k}i\in \cQ(\vpp{k-1}{1},\dots,
\vpp{k-1}{k}, \fQ^k)}} |\sdel \tau_i \vppf k{}|^2\Big)^\frac12\rho_{B(z,\frac C{\sigma_k})}\Big\|_{L^p}^\frac pk.
\end{align*}

The dyadic cubes of sidelength $\sigma_k$ in $\cQ(\vpp{k-1}{1},\dots,
\vpp{k-1}{k}, \fQ^k)$ are contained in $O(\sigma_k)$-neighborhood
of $\mbn^{-1}(\Pi_\ast^k)$ which is a smooth $k$-dimensional surface. Thus, 
$\# \{\vpp {k}i\subset \vpp i{k-1}: \vpp {k}i\in \cQ(\vpp{k-1}{1},\dots,
\vpp{k-1}{k}, \fQ^k)\} $ $ \le C\sigma^{-k}_k.$
Now, by H\"older's inequality we get 
\begin{align*}
\Big\|\Big(\prod_{i=1}^k S_\delta  f_i 
\Big)^\frac1k\Big\|_{L^p(\fQ^k)}^p
&\lesssim \sigma_{k-1}^{-C_\epsilon}\sigma_{k}^{-\epsilon-k(\frac p2-1)}
 \prod_{i=1}^k \Big\|\Big(\sum_{\vpp {k}i\subset \vpp {k-1}i}
|\sdel \tau_i\vppf k{i}|^p\Big)^\frac1p \rho_{\fQ^k}\Big\|_{L^p}^\frac pk\,. 
\end{align*}
Summation along $\fQ^k$ using rapid decay of Schwartz function
$\rho$ gives 
\begin{align*}
\Big\|\Big(\prod_{i=1}^k S_\delta  f_i 
\Big)^\frac1k\Big\|_{L^p}
\lesssim \sigma_{k-1}^{-C_\epsilon}\sigma_{k}^{-\epsilon-k(\frac 12-\frac1p)}
 \prod_{i=1}^k \Big\|\Big(\sum
|\sdel \tau_i\vppf k{i}|^p\Big)^\frac1p\Big\|_p^\frac1k.
\end{align*}
Hence, using  Proposition \ref{rescalesquare}, Lemma \ref{vector}, and \eqref{est1},  
for $2\le p\le \frac{2k}{k-1}$, we have 
\begin{align*}
&\qquad\Big\|\Big(\prod_{i=1}^k S_\delta  f_i  \Big)^\frac1k\Big\|_{L^p}
   \lesssim \sigma_{k-1}^{-C}  \sigma_{k}^{-\epsilon-\frac{k-1}2+\frac{k+1}p}
B(\sigma_k^{-2}\delta) \|f\|_p  \lesssim    \sigma_{k-1}^{-C} \delta^{-\frac{d-2}2+\frac dp -\beta} \times
\\
&  \sigma_{k}^{\beta+\frac{2d-k-3}2-\frac{2d-k-1}p}   
 \big(\sigma_k^{-C}+
\cB^\beta(s)\big) \|f\|_p
 \lesssim \sigma_{k-1}^{-C} \delta^{-\frac{d-2}2+\frac dp -\beta}
\big(\sigma_k^{-C}+\sigma_{k}^{\alpha}
\cB^\beta(s)\big) \|f\|_p\,
\end{align*}
with some $\alpha>0$ if $p\ge \frac{2(2d-k-1)}{2d-k-3}$. Here we have used 
$(100d)^{-1}\beta\ge \epsilon$.  We note that  the right hand side of the above is independent of 
${\tau_1,\dots,\tau_k}$ and  there are only 
$O(\sigma_{k-1}^{-C})$ many $k$-tuples  $(\qqq {k-1}1, \dots, \qqq {k-1}k)$ satisfying 
 $\qqq {k-1}1, \dots, \qqq {k-1}k: trans$. Thus, recalling the definition of $\overline{{\fM^{k}}\!f}$, 
  we have for $2\le p\le \frac{2k}{k-1}$ 
\[ 
\overline{{\fM^{k}}\!f} 
\lesssim \sigma_{k-1}^{-C} \delta^{-\frac{d-2}2+\frac dp -\beta}
\Big(\sigma_k^{-C}+\sigma_{k}^{\alpha}
\cB^\beta(s)\Big) \|f\|_p\,
\] 
with some $\alpha>0$ provided that $p\ge \frac{2(2d-k-1)}{2d-k-3}$.
Combining  this and \eqref{est3r}  we have,  for some $\alpha>0$, 
\begin{align}\label{est4s}
\overline{\fM^k} f
\le C\delta^{-\frac{d-2}2+\frac dp-\beta}\Big(\sigma_k^{-C}+\sigma_{k}^{\alpha}
\cB^\beta(s)\Big) \|f\|_p\,.
\end{align}
provided that $ p\ge\min\Big(\frac{2(2d-k-1)}{2d-k-3},
\frac{2k}{k-1}\Big).$

\subsubsection*{Closing induction}  Let us set \[p(m)= \max \Big( \max_{1\le k\le m}\min\big(\frac{2(2d-k-1)}{2d-k-3},
\frac{2k}{k-1}\big), \,\,\frac{2(m+1)}{m}\Big).\]
Since $p\ge p_s> \frac{2(d-1)}{d-2}$ and $(100d)^{-1}\beta\ge \epsilon$,  we have  $\sigma_k^{\frac2p} B(\sigma_k^{-2}\delta)
\lesssim \sigma_k^{\alpha}
\delta^{-\frac{d-2}2+\frac dp-\beta}( \cB^\beta(s)+\sigma_k^{-C})$  for some $\alpha>0$. Using \eqref{scale-2}, we  combine  the estimates \eqref{est1}, \eqref{est2},
and \eqref{est4s} to get
\begin{align*}
\|\sdel f\|_p\le C\sum_{k=1}^{m} \big(\sigma_{k-1}^{-C} +\sigma_{k-1}^{-C}
\sigma_{k}^{\alpha} \cB^\beta(s)\big)
\delta^{-\frac{d-2}2+\frac dp-\beta}  \|f\|_p+ C\sigma_{m}^{-C}
\delta^{-\frac{d-2}2+\frac dp-\beta} \|f\|_p
\end{align*}
for some $\alpha>0$ as long as $p\ge p(m)$.  The rest of proof is similar to that in Section \ref{closinginduction}. So, we intend to be brief. 
By using stability of the estimates along $\psi\in\fgee$, $\eta\in\mathcal E(N)$,
multiplying by $\delta^{\frac{d-2}2-\frac dp+\beta}$ on both sides and taking supremum along $\psi$, $\eta$ and $f$,   
and  taking supremum along $\delta$, 
$s< \delta\le 1$,  we get
\[\cB^\beta(s)
 \le C\Big(\sum_{k=1}^{m}\sigma_{k-1}^{-C}
\sigma_{k}^{\alpha} \Big)\cB^\beta(s)+ C \sum_{k=1}^{m}\sigma_{k}^{-C}\]
for some $\alpha>0$ provided that $p\ge p(m)$.
Choosing $\sigma_1,\dots, \sigma_{m-1}$ so that 
$C\Big(\sum_{k=1}^{m-1}\sigma_{k-1}^{-C}
\sigma_{k}^{\alpha} \Big)\le 1/2, $ 
gives  $\cB^\epsilon(\delta)\le C\sigma_m^{-C}$ for $p\ge p(m)$. Therefore, to complete the proof  we need only to  check that  the minimum of $p(m)$, $2\le m \le d-1$, is  $p_s$. This can be done by a simple computation.

\end{document}